\title{\normalsize{\textbf{TOPOLOGICAL TRIVIALITY AND LINK-CONSTANCY IN DEFORMATIONS OF INNER KHOVANSKII NON-DEGENERATE MAPS. }}}
\author{Julian D. Espinel Leal \href{https://orcid.org/0009-0004-8604-0624}{\includegraphics[scale=0.2]{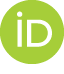}} \and Eder L. Sanchez Quiceno \href{https://orcid.org/0000-0001-6073-2383}{\includegraphics[scale=0.2]{orcid.png}}}
\newcommand{\Addresses}{{
  \bigskip
  \footnotesize
  \noindent  J.~D.~Espinel Leal,\\ \textsc{Institute of Mathematics and Computer Science (ICMC)
Universidade de S\~ao Paulo(USP),\\ Avenida Trabalhador S\~ao-Carlense, 400 - Centro
CEP: 13566-590 - S\~ao Carlos - SP, Brazil.}\\ 
E-mail:\texttt{ jdel941@gmail.com}

\medskip 
  
\noindent  E.~L.~Sanchez Quiceno,\\ \textsc{Departamento de Matemática, Universidade Federal de S\~ao Carlos,\\ Rodovia Washington Lu\'is, Km 235, CEP 13560-905, Caixa Postal 676 S\~ao Carlos- SP, Brazil.}

\medskip 
\noindent \textsc{Instituto Universitario de Matem\'atica Pura y Aplicada,\\ Ed. 8E, Camino de Vera, s/n 46022 València, Spain.}\\
E-mail:\texttt{ ederleansanchez@alumni.usp.br}
\par\nopagebreak
  \medskip 
\textsc{}\par\nopagebreak
}}
\date{}
\providecommand{\MR}[1]{}
        \titleformat{\subsection} [block] {\large \mdseries}
        {\thesubsection} {1ex} {}
        [
        ] 
\theoremstyle{thmstyleone}%
\newtheorem{theorem}{Theorem}[section]
\newtheorem{proposition}[theorem]{Proposition}%
\newtheorem{claim}[theorem]{Claim}
\newtheorem{lemma}[theorem]{Lemma}
\newtheorem{corollary}[theorem]{Corollary}
\theoremstyle{thmstyletwo}%
\newtheorem{ex}[theorem]{Example}%
\newtheorem{remark}[theorem]{Remark}%
\theoremstyle{thmstylethree}%
\newtheorem{definition}[theorem]{Definition}%
\newcommand{\overbar}[1]{\mkern 2.0mu\overline{\mkern0mu #1 \mkern-1.0mu}\mkern 2mu}
\newcommand{\C}{\mathbb{C}}       
\newcommand{\w}{\boldsymbol{w}}
\newcommand{\x}{\boldsymbol{x}}
\newcommand{\R}{\mathbb{R}}       
\newcommand{\K}{\mathbb{K}}  
\newcommand{\N}{\mathbb{N}}       
\newcommand{\rme}{\mathrm{e}}
\newcommand{\rmi}{\mathrm{i}}
\DeclareMathOperator{\supp}{supp}
\begin{document}
\maketitle

\begin{abstract} 
For real and mixed polynomial maps $f=(f^1,\dots,f^p)$ satisfying $f(0)=0$, we introduce the notion of \textit{Inner Khovanskii Non-Degeneracy} (IKND), that generalizes a previous non-degeneracy condition introduced by Wall for complex polynomial functions (J. Reine Angew. Math. 509 (1999), 1-19). We prove that IKND is a sufficient condition ensuring that the link of the singularity of $f$ at the origin is smooth and well-defined. We study one-parameter deformations of an IKND map $f$, given by $F(\bm{x},\varepsilon)=f(\bm{x})+\theta(\bm{x},\varepsilon)$, with $ F(0,\varepsilon)=0$. We prove that the deformation is \textit{link-constant} under suitable conditions on $f$ and $\theta$, meaning that the ambient isotopy type of the link remains unchanged along the deformation. Furthermore, by employing a strong version of this non-degeneracy, \textit{Strong Inner Khovanskii Non-Degeneracy} (SIKND), we obtain results on topological triviality. 
In the final section, we present link-constant deformations for IKND mixed polynomial functions of two variables. We also explore several applications motivated by the recent findings of Araújo dos Santos, Bode, and Sanchez Quiceno (Bull. Braz. Math. Soc. (N.S.) 55 (2024), no. 3, Paper No. 34).
\vspace{0.2cm}

\noindent \textit{Keywords:} deformations, link-constancy, inner non-degeneracies, mixed maps, topological triviality, real polynomial maps.
   
     \vspace{0.2cm} 
     
\noindent \textit{MSC Classification:} 
Primary 14M25 ; Secondary 32S55, 14M10, 14J17, 57K10, 32S05 . 	

\end{abstract}

\maketitle

\section{Introduction}\label{sec1}
This paper aims to establish results on equisingularity for deformations of real and mixed polynomial maps under a new non-degeneracy setting that extends the condition introduced by Wall, called the inner Newton non-degeneracy (INND)\footnote{Originally denoted NPND* in \cite{wall}, it was later termed INND by other authors.} \cite{wall}.

\subsection{Background}
A key question in the study of complex (or real) analytic singularities is whether a given deformation 
\[ F(\boldsymbol{x},\varepsilon): \mathbb{K}^n \times \mathbb{K} \to \mathbb{K}^p, \ \K=\C \text{ or } \R \]
is equisingular (or trivial). In other words, do the maps 
\[ F_\varepsilon(\boldsymbol{x}):=F(\boldsymbol{x},\varepsilon):\mathbb{K}^n \to \mathbb{K}^p \]
remain locally equivalent, in some suitable sense, for sufficiently small values of \(\varepsilon\)?

A foundational result by L\^e and Ramanujam \cite{LeRamanujam1976} shows that, when \(n \neq 3\), any deformation $F$ of a complex analytic function having constant Milnor number ($\mu$-constant deformation) is \(V\)-trivial, that is, for the associated family of hypersurfaces \(\{F_{\varepsilon}^{\, -1}(0)\}\), there exist germs of homeomorphisms 
\( h_\varepsilon: (\mathbb{C}^n,0) \to (\mathbb{C}^n,0) \)
such that \(h_\varepsilon (F_{\varepsilon}^{\, -1}(0))=F_{0}^{\, -1}(0)\). The case \(n=3\) remains an important open problem in singularity theory. 

In \cite{Timourian}, Timourian improves the result of L\^e and Ramanujam by proving that a $\mu$-constant deformation is topologically trivial for $n \not= 3$, that is, there exist germs of homeomorphisms \(h_\varepsilon : (\C^n,0) \to (\C^n,0)\) such that \(F_\varepsilon \circ h_\varepsilon = F_0\). Later, in \cite{wall}, Wall studied deformations of a complex analytic function $f$, that is, $F(\boldsymbol{x}, 0 ) = f(\boldsymbol{x})$. Under the hypothesis of inner Newton non-degeneracy on \(f\), and imposing additional conditions on \(F_{\varepsilon}-f\), he proved that the deformation is topologically trivial, even in the difficult case \(n = 3\). 

A natural setting to extend Wall's results is the class of deformations of mixed polynomial functions, namely, complex-valued polynomials in complex variables and their conjugates. This class is equivalent to that of real polynomial maps $\R^{2n} \to \R^2$, and it includes the classical complex polynomial functions. Studying these objects as mixed polynomial functions offers significant advantages: it enables the formulation of non-degeneracy conditions in terms of the Newton polyhedron of a mixed polynomial function, as defined by Oka in \cite{Oka2010}, and ensures the resolution of singularities from the perspective of toric modifications, as also demonstrated by Oka in \cite{Oka2010}. This generalizes, to mixed polynomial functions, the previous notions of non-degeneracies of complex polynomial functions due to Kouchnirenko \cite{Kouchnirenko1976}. Within this framework, Araújo dos Santos, Bode and Sanchez Quiceno \cite{AraujoBodeSanchez} extended the notion of inner Newton non-degeneracy (INND) to mixed polynomial functions of two variables and proved that such functions have a weakly isolated singularity at the origin, that is, $f^{-1}(0)\cap \Sigma(f)=\{0\}$, where $\Sigma(f)$ denotes the singular locus. This implies that $f$ defines a smooth and well-defined link of the singularity (in the sense of Milnor \cite{Milnor1968}). They showed that, under $\Gamma$-niceness condition (Definition \ref{def:niceness}), these links depend on the Newton boundary and have a decomposition of the form $\textbf{L}([L_1,L_2,\dots,L_{N-1}],L_{N})$ (see Definition~\ref{nestedlink}), where each link $L_i$ corresponds to a 1-face $\Delta_i$ of the Newton boundary of \(f\). Subsequently, Bode in \cite{BodeSanchez2023} studied the topological properties of the pieces $L_i$ and completely characterized the links of $\Gamma$-nice and INND mixed polynomial functions.
To the author's knowledge, a complete characterization of the links of INND mixed polynomial functions remains open.

For real polynomial maps
\(\mathbb{R}^4 \to \mathbb{R}^2\) with a weakly isolated singularity at the origin, the corresponding characterization is known: every link in the 3-sphere can occur. This was established by Akbulut and King \cite{Akbulut_King1981} and later polished by Bode \cite{Bode2022semih}. For maps with an isolated singularity at the origin, however, the characterization problem remains open. It is conjectured that every fibered link arises in this setting \cite{Benedetti_Shiota1998}. Recent advances further support this conjecture \cite{Araujo_Sanchez2021,Bode2019,bodesat,bode:thomo,Bode2025}. In contrast, for complex polynomial functions of two variables, the corresponding links are completely determined by explicit conditions derived from their Puiseux pairs, following classical results of Brauner, Burau, and Kähler \cite{Brauner1928,Burau1933,Burau1934,kahler}.

In this paper, we generalize the notion of inner Newton non-degeneracy (INND) for real polynomial maps and mixed polynomial maps, introducing the concept of \textit{(strong) inner Khovanskii non-degeneracy} ((S)IKND). Our goal is to obtain results on equisingularity for one-parameter deformations of such maps, while also producing tools for characterizing links of singularities. Further results on non-degenerate maps can be found in Bivià Ausina and Khovanskii \cite{BiviaAusina2007,Khovanskii1977}, and their deformations are studied, among others, by Damon, Eyral, Gaffney, Nguyen and Oka \cite{Damon1989,Eyral2022,Eyral2017,Thang2022}. Since complex polynomial maps are particular cases of mixed polynomial maps, our results extend previous work in that setting. Altogether, our contributions help bridge the gap between the complex and real settings of singularity theory by providing a unified non-degeneracy framework.

\subsection{Main results}
Our new non-degeneracy conditions are based on a collection of \(C\)-face diagrams, which play a similar role to Newton boundaries of the coordinate functions in the classical definition of Khovanskii non-degeneracy (KND) introduced in \cite{Khovanskii1977}. As examples of IKND maps, we present the following classes: convenient KND maps, semi-weighted homogeneous maps (SWH), semi-radially weighted homogeneous mixed polynomial maps (SRWH), and KND polynomial functions in three variables that are not necessarily convenient (see Proposition~\ref{prop:kndsurfaces}). We investigate two important properties to obtain equisingular deformations: (weak) no coalescing of critical points and the $\rho$-uniform Milnor radius. In the case of complex analytic functions, the no coalescing of critical points of $F$ is equivalent to being $\mu$-constant \cite{King1980}. Related results appear in \cite{Menegon2023,Menegon2024}. The notion of $\rho$-uniform Milnor radius, studied by King \cite{King1980} and Bekka \cite{zBekka2015}, is further discussed in \cite{oka1997non,Eyral2017-2,Thang2022}. These properties are relevant because (weak) no coalescing of critical points of $F$ ensures that each $F_\varepsilon$ has a (weakly) isolated singularity at the origin. In the case of complex polynomial maps each $F_\varepsilon$ defines an isolated complete intersection singularity (ICIS). The (weak) no coalesing of critical points implies that the link of each \( F_\varepsilon \), $F^{-1}_{\varepsilon}(0)\cap \mathbb{S}_r $, is smooth and well-defined for $r$ sufficiently small (Here, $\mathbb{S}_r$ denotes a standard sphere of radius $r$ and centered at the origin). However, it is not known if the isotopy type of these links changes with the parameter $\varepsilon$. We address this by showing in Proposition~\ref{Prop:GD} that the existence of a $\rho$-uniform Milnor radius is sufficient to guarantee that the isotopy type is preserved, a property we term \textit{link-constancy}. In earlier work by King \cite{King1980} and Bekka \cite{zBekka2015}, no coalescing of critical points together with  additional conditions were used to obtain topologically trivial deformations. In Propositions~\ref{cor: top trivi} and \ref{Prop:toptrivialwithrho}, we provide examples of such deformations.

We therefore restrict our study to deformations of (S)IKND real or mixed polynomial maps, and we establish our first main result, Theorem~\ref{Th:Main1}. This theorem states that if \( f = (f^1, \dots, f^p) \) is SIKND (resp. IKND) with respect to a set of \( C \)-face diagrams \( D_1, \dots, D_p \), and if the terms of \( F^{\, j}_\varepsilon -f^j \) lie above or on \( D_j \) for each \( j = 1, \dots, p \), then the deformation $F$ has no coalescing (resp. weak no coalescing) of critical points. Under additional hypotheses, these deformations are also link-constant and topologically trivial. These conditions are established in our main results: Theorems~\ref{teo: knd co -> urm} and  \ref{Th:Main2} for maps, and Theorem~\ref{th:nonice} for mixed polynomial functions of two variables.

As a first application of the latter theorem, Corollary~\ref{cor:nicecase}, we yield a full extension, into the mixed setting of two variables, of the classical result that the link type of a convenient non-degenerate complex polynomial function is completely determined by the monomials on its Newton boundary (see \cite{AraujoBodeSanchez}). As a second application, we obtain Proposition~\ref{th:piecewise family}, 
which states that any $\Gamma$-nice and INND mixed polynomial function $f$ can be 
placed in a link-constant piecewise linear analytic family of mixed polynomials 
$\{f_\varepsilon\}_{\varepsilon \in [0,1]}$ such that $f_0=f$ and the link of $f_1$ 
is isotopic to a link of the form
\(
\mathbf{L}([L_1, L_2, \dots, L_{N-1}], L_N).
\)
In particular, this yields \cite[Theorem~1.2]{AraujoBodeSanchez}, with a slightly 
different construction of the links $L_{i}$. We also show that every member of such link-constant piecewise linear analytic family 
is always $\Gamma$-nice. This property is established in Proposition~\ref{prop: niceisopen}, 
where it is proved that, for a fixed $C$-face diagram $D$, both the $D$-nice and the 
non $D$-nice conditions define open properties. As a final application, in Proposition~\ref{cor:classoka} we present a generalization 
of \cite[Theorem~1.9]{Bode2025}, showing that the class of links arising from IKND 
mixed polynomials coincides with that arising from convenient and KND mixed polynomial functions.

\subsection{Outline}
The remainder of the paper is organized as follows.
In Section~\ref{section2}, we introduce in Definition~\ref{def: ind} the non-degeneracy condition (S)IKND for real and mixed polynomial maps, in  Definition~\ref{Def:SWH}, the classes of SWH and SRWH maps. In Lemma~\ref{lemma: IND M -> IKND} motivated by the correspondence between real polynomial maps \( \mathbb{R}^{2n} \to \mathbb{R}^{2p} \) and mixed polynomial maps \( \mathbb{C}^n \to \mathbb{C}^p \), we show that any (S)IKND mixed polynomial map of \(n\)-variables also satisfies the (S)IKND condition when regarded as a real polynomial map of \((2n)\)-variables. However, the converse does not hold: not every real polynomial map that is (S)IKND arises from a (S)IKND mixed polynomial map (see Remark~\ref{contraexemploreciprocalLemma210}). In Section~\ref{section3}, we establish our first main result, Theorem~\ref{Th:Main1}, which gives sufficient conditions to obtain (weak) no coalescing of the critical points of the deformation. This result is relevant for the Section~\ref{section4}, where we study link-constancy and topological triviality. In Corollary~\ref{cor:convenient} and Proposition~\ref{prop:defkndsurfaces}, we highlight the significance of the hypotheses in Theorem~\ref{teo: knd co -> urm}.
In Subsection~\ref{section5}, we focus on mixed polynomial functions of two variables. 
In Proposition~\ref{prop:caracterizationinner}, we show that our class is equivalent 
to the class of INND mixed polynomial functions introduced 
in \cite{AraujoBodeSanchez}. We then present the proof  of our main result Theorem~\ref{th:nonice} 
and give the first application in Corollary~\ref{cor:nicecase}.

The niceness condition splits the class of IKND mixed polynomial functions into two nonempty subclasses: those that are 
$\Gamma$-nice and those that are not \(\Gamma\)-nice. See Examples~\ref{exnonice} and \ref{ex:nonice2} for a non $\Gamma$-nice IKND mixed polynomial function and Example~\ref{ex:nice} for a $\Gamma$-nice one. In Proposition~\ref{prop:characterizationknd}, we provide a 
characterization of the KND condition for certain mixed polynomial functions. This result allows 
one to determine whether a given IKND mixed polynomial function is $\Gamma$-nice. 
It also facilitates the proof of Proposition~\ref{prop: niceisopen}. Finally, we present two further applications in Propositions~\ref{th:piecewise family} 
and \ref{cor:classoka}.

\vspace{0.2cm}

\textbf{Acknowledgments:} Julian Espinel Leal was supported by the CAPES Grant \\ PROEX-11162221/D. Eder L. Sanchez Quiceno was supported by S\~ao Paulo Research Foundation (FAPESP), Brasil, Process Number 2023/11366-8, 2024/17116-6. The authors  are thankful to Professor Benjamin Bode from UPM, Spain, Professor Carles Bivià Ausina from UPV, Spain, and  Professor Raimundo N. Ara\'ujo dos Santos from ICMC-USP, Brazil, for their valuable discussions and comments that contributed to the paper.

\section{Inner Khovanskii non-degenerate maps (IKND)}\label{section2}
Let $\mathcal{J}$ be a set of homogeneous linear functions $\ell $ such that $\ell(\boldsymbol{\nu})>0 $ for all $\boldsymbol{\nu} \in (\R_{>0})^n$. Let $J$ be a finite subset of $\mathcal{J}$, we define a minimal function of $J$ to be $\ell_J: (\R_{\geq 0})^n \rightarrow \R $, $\ell_J(\boldsymbol{\nu}) = \min_{\ell \in J} \ell (\boldsymbol{\nu})$. The $C$-\textit{face diagram} of $J$ is the set $D(J)=\{\boldsymbol{\nu} \in (\R_{\geq 0})^n \mid \ell_J(\boldsymbol{\nu}) = 1 \}$. A \textit{face} of a $C$-face diagram $D$ is a nonempty subset $\Delta \subset D$, such that there exist index sets $ I_\Delta \subseteq [n]:= \{1, \dots, n\}$ and $J_\Delta \subset J$ for which $\Delta $ is defined by equations:
\[
\begin{cases}
	\pi_i(\boldsymbol{\nu}) = 0, & \text{for } i \in (I_\Delta)^c, \\
	\ell(\boldsymbol{\nu}) = 1, & \text{for } \ell \in J_\Delta,
\end{cases}
\]
where $\pi_i$ is the projection onto the $i$-th coordinate. A face \(\Delta\) is called an \textit{inner face} if $I_\Delta = [n]$, equivalently, if it is not contained in any proper coordinate subspace\footnote{The subspaces of the form $\{x_i=0 \}_{i \in [n]}$ } of \(\R^n\).

Let $S$ be a finite set of $\mathbb{Q}_{\geq 0 }^n$, the \textit{Newton polyhedron} on $S$, denoted by $\Gamma_{+}(S)$, is the convex hull of
\[ \bigcup_{\nu \in S} \{ \nu + (\R_{\geq 0})^n\} ,\]
where + denotes the Minkowski sum. The \textit{Newton boundary}  $\Gamma(S)$ is defined as the union of compact faces of $\Gamma_+(S)$. We say that $\Gamma(S)$ is \textit{convenient} if it intersects all  coordinate axes. Observe that a convenient Newton boundary \( \Gamma \) is  a $C$-face diagram.

Given a weight vector $\w \in (\mathbb{Q}_{>0})^n$ and a polyhedron $\Gamma_+$, they define a linear function $\ell_{\w}: \Gamma_+ \to \R_{>0}$ given by 
\(\ell_{\w}(\boldsymbol{\nu}):=\langle \w , \boldsymbol{\nu}  \rangle  \), where $\langle \ , \  \rangle$ is the euclidean inner product. Denote $\Delta(\w;\Gamma)$ the face of the Newton boundary $\Gamma$ of $\Gamma_+$ where $\ell_{\w}$ takes its minimal value, denoted by $d(\w;\Gamma)$. 

For $\Gamma(S)$, define the finite set of weight vectors, $$\mathcal{P}(\Gamma(S)):=\{\w_1,\dots,\w_N\}\subset (\mathbb{Q}_{>0})^n,$$  where $\Delta(\w_i;\Gamma(S)), \ i=1,\dots,N,$ is an $(n-1)$-face, $\Gamma(S)=\cup_{i=1}^{N} \Delta(\w_i;\Gamma(S))$, and  $\Delta(\w_i;\Gamma(S))=\Delta(\w_j;\Gamma(S))$ if and only if $i=j$.  
\vspace{0.2cm}

Let $f:\R^n \to \R$ be a polynomial function of the form
\[f(\boldsymbol{x})= \sum_{\boldsymbol{\nu}} c_{\boldsymbol{\nu}} \boldsymbol{x}^{\boldsymbol{\nu}}, \ c_{\boldsymbol{\nu}} \in \R \]
where, $\boldsymbol{x}^{\boldsymbol{\nu}}=(x_1)^{\nu_1}\cdots (x_n)^{\nu_n}$
and satisfying $f(0)=0$. The \textit{support} of $f$ is defined by 
\(\supp (f):=\{\boldsymbol{\nu} \in (\mathbb{Z_{\geq0}})^n \mid c_{\boldsymbol{\nu}}\neq 0 \}.\)

A complex-valued polynomial function $f : \C^n\to \C$ is called a \textit{mixed polynomial function} if $f(0)=0$, and it is a complex polynomial in complex variables $\x=(x_1,\dots ,x_n)$ and their conjugates $\overbar{\x}=(\overbar{x_1},\dots,\overbar{x_n})$, that is, 
$$f(\x) = \sum_{\boldsymbol{\nu},\boldsymbol{\mu}} c_{\boldsymbol{\nu},\boldsymbol{\mu}} \x^{\boldsymbol{\nu}} \overbar{\x}^{\boldsymbol{\mu}},\ c_{\boldsymbol{\nu},\boldsymbol{\mu}} \in \C.$$ 
The \textit{support} of $f$ is defined, in \cite{Oka2010},  by 
\(\supp (f):=\{\boldsymbol{\nu}+\boldsymbol{\mu} \in (\mathbb{Z_{\geq0}})^n \mid c_{\boldsymbol{\nu},\boldsymbol{\mu}}\neq 0 \}\).

Note that the notion of support of a function \( f \) depends on whether \( f \) is regarded as a real or a mixed polynomial function. For instance, if \( f \) is a mixed real-valued polynomial function, such as \( f(\boldsymbol{x}) = \boldsymbol{x} \overline{\boldsymbol{x}} \), its real support differs from its mixed support. To avoid any ambiguity, we will always specify the nature of \( f \), namely whether it is considered as a real or as a mixed polynomial function.

The \textit{Newton polyhedron} of a real polynomial function (or mixed polynomial function) $f$ is defined as $\Gamma_+(f):=\Gamma_{+}(\supp (f)).$ The \textit{Newton boundary} is defined as $\Gamma(f):=\Gamma(\supp (f)),$ 
and $\mathcal{P}(f):=\mathcal{P}(\Gamma(\supp (f)))$. 
The (mixed) real polynomial function $f$ is \textit{convenient} if $\Gamma(f)$ is a $C$-face diagram. 

Consider $\w \in (\mathbb{Q}_{>0})^n$ and $\Gamma(S)$. If $f$ is a real polynomial function then we  define the \textit{face function} of $f$ respect to $\w$ and $\Gamma(S)$ by
\begin{equation*}
	f_{\Delta(\w;\Gamma(S))}(\boldsymbol{x}):= \sum_{\boldsymbol{\nu} \in \Delta(\w;\Gamma(S)) }c_{\boldsymbol{\nu}} \boldsymbol{x}^{\boldsymbol{\nu}}.
\end{equation*}
Analogously, if $f$ is a mixed polynomial function the \textit{face function} of $f$ respect to $\w$ and $\Gamma(S)$ is defined by 
\begin{equation*}
	f_{\Delta(\w;\Gamma(S))}(\x) = \sum_{\boldsymbol{\nu}+\boldsymbol{\mu} \in \Delta(\w;\Gamma(S))} c_{\boldsymbol{\nu},\boldsymbol{\mu}} \x^{\boldsymbol{\nu}} \overbar{\x}^{\boldsymbol{\mu}}.
\end{equation*}
We denote $\Delta(\w;f):=\Delta(\w;\Gamma(f))$, $d(\w;f):=d(\w;\Gamma(f))$, and 
$f_{\w}:=f_{\Delta(\w;\Gamma(f))}$.

Let \( D_1, \dots , D_p\) be \( C \)-face diagrams. Consider the Minkowski sum
\(
(D_1 + (\R_{\geq 0})^n) + \cdots + (D_p + (\R_{\geq 0})^n).
\)
The union of its compact faces defines a new \( C \)-face diagram, denoted by \( D:=D_1 + \cdots + D_p. \) Note that this is not the Minkowski sum of the diagrams \( D_i \) themselves. However, each face \( \Delta \) of \( D \) admits a unique decomposition \( \Delta = \Delta_1 + \cdots + \Delta_p \), where each $\Delta_i$ is a face of $D_i$, in this case, the sum is indeed the Minkowski sum of the corresponding faces.
\vspace{0.2cm}

For \(I \subseteq [n] \), and $\K=\R \text{ or } \C$, define
\[\K^I:=\{\boldsymbol{x} \in \K^n \mid x_i=0 \text{ if } i \not\in I\} \simeq \K^{|I|}, \]
\[(\K^*)^I:= \left\{\boldsymbol{x} \in \K^n \mid \prod _{i \in I} x_i \neq 0 \right \} \cap \K^I \simeq (\K^* )^{|I|}.\]
Let \( f:\K^n \rightarrow \K^p \) be a real polynomial map (or mixed polynomial map). If $\K = \C$, we can regard $f$ as a map $\tilde{f}: \R^{2n} \rightarrow \R^{2p}$ by identifying $\C $ with $\R^2$. The set of critical points of $f$ is defined by $\Sigma(f): = \{ \boldsymbol{x} \in \K^n \mid \mathrm{rank } (J_{\tilde{f}}(\boldsymbol{x})) < 2p \}$, where $J_{\tilde{f}}(\boldsymbol{x})$ denotes the real Jacobian matrix of $\tilde{f} $. Note that if $f$ is a complex polynomial map then, $\Sigma(f) = \{\boldsymbol{x} \in \C^n \mid \mathrm{rank } (J_{f}(\boldsymbol{x})) < p \}$, where $J_{f}(\boldsymbol{x})$ is the complex Jacobian matrix of $f$.
\begin{definition}\label{def: ind}
	Let \( f = (f^1, \dots, f^p) \) be a real polynomial map (or mixed polynomial map). We say that $f$ is: 
	\begin{itemize}
		\item \textbf{Inner Khovanskii non-degenerate (IKND)} if there exist \( C \)-face diagrams \( D_1, \dots, D_p \) such that
		\begin{enumerate}
			\item[(i)] for each $j=1,\dots,p,$, \(d(\boldsymbol{w}; f^{j}) \geq d(\boldsymbol{w}; D_j), \text{ for all }  \boldsymbol{w} \in \mathcal{P}(D_j).\)
			\item[(ii)] for every inner face \( \Delta \) of \(D=D_1+\cdots+ D_p\) and every nonempty subset \( I \subseteq [n] \), we have:
			\[
			\Delta \cap \R^I \neq \emptyset \quad \Rightarrow \quad \Sigma(f_\Delta) \cap V(f_\Delta) \cap (\K^*)^I = \emptyset,
			\]
			where \( f_\Delta = (f^{\, 1}_{\Delta_1}, \dots, f^{\, p}_{\Delta_p}) \) denotes the corresponding face system related to $\Delta=\Delta_1+\cdots+\Delta_p$, with $\Delta_i$ face of $D_i$.
		\end{enumerate} 
		\item \textbf{Strongly inner Khovanskii non-degenerate (SIKND)} if it is IKND and satisfies the following extra condition
		\begin{enumerate}
			\item[(ii')] for every inner face \( \Delta \) of \(D=D_1+\cdots+ D_p\) and every nonempty subset \( I \subseteq [n] \), 
			\[\Delta \cap \R^I \neq \emptyset \quad \Rightarrow \quad \Sigma (f_\Delta) \cap (\K^*)^{I} = \emptyset.\]
		\end{enumerate}
	\end{itemize}
	
\end{definition}
\begin{remark}
    \begin{enumerate}
        \item[(i)] Condition~(i) of Definition \ref{def: ind} is equivalent to requiring that no point of \(\supp (f^i)\) lies below \(D_i\), i.e., \( \supp (f^i) \subseteq D_i + (\R_{\geq 0})^n \) for each \( i=1,\dots, p \).
	    \item[(ii)] Note that a  $C$-face diagram satisfying (i), (ii) (or (ii')) is not necessarily unique. For example, consider the function $f: \K^2 \rightarrow \K$, given by $f(x_1, x_2) = x_{1}^{\,3} + x_1 x_2 + x_{2}^{\,3}$, and the  $C$-face diagrams $D_1:=D({J_1})$, $D_2:=D({J_2})$, and $D_3:=D({J_3})$, with 
	\[
	\begin{aligned}
		J_1 = \{\ell_1(\nu) = \nu_1 + \nu_2\}, \
		J_2 = \left\{
		\ell_{2}(\nu) = \tfrac{2\nu_1 + \nu_2}{3}, \
		\ell_{3}(\nu) = \tfrac{\nu_1 + 2\nu_2}{3}
		\right\}, \ 
		J_3 = \left\{\ell_3(\nu)  \right\}.
	\end{aligned}
	\]
	In this case, $f$ satisfies both $(i)$ and $(ii)$ for any of these  $C$-face diagrams $D_1$, $D_2$, and $D_3$.
        \end{enumerate}
\end{remark}

\subsection{(S)KND maps}

\begin{definition} \label{def: KND}
	Let $f = (f^1, \dots, f^p): \mathbb{K}^n \to \mathbb{K}^p$ be a real polynomial map (or mixed polynomial map). Fix a strictly positive weight vector $\boldsymbol{w} \in (\mathbb{N}_{>0})^n$, and denote by $f_{\boldsymbol{w}} = (f_{\boldsymbol{w}}^{\,1}, \dots, f_{\boldsymbol{w}}^{\,p})$ the associated face map of $f$ with respect to $\boldsymbol{w}$. We say that $f$ is:
	\begin{itemize}
		\item \textbf{Khovanskii non-degenerate (KND)} if for every strictly positive weight vector $\boldsymbol{w}$,
		\[
		\Sigma (f_{\boldsymbol{w}})\cap V(f_{\boldsymbol{w}}) \cap (\mathbb{K}^*)^n = \emptyset.
		\]
		
		\item \textbf{Strongly Khovanskii non-degenerate (SKND)} if for every strictly positive weight vector $\boldsymbol{w}$, we have
		\[
		\Sigma (f_{\boldsymbol{w}}) \cap (\mathbb{K}^*)^n = \emptyset.
		\]
	\end{itemize}
\end{definition}

For a map $f=(f^1,\dots,f^p):\K^n \to \K^p$, we define 
\(I^j_{\mathrm{nc}}(f):=\{i \in [n] \mid \supp (f^j) \cap \R^{\{i\}}= \emptyset\}\) and   \(I_{\rm{nc}}(f):=\cup_{j=1}^p I^j_{\rm{nc}}(f).\) We say that the map $f$ is \textit{convenient} if \(I_{\rm{nc}}(f)=\emptyset\). In other words, if any coordinate function $f^j$ is convenient.

\begin{remark}
    If $f:\C^n \to \C$ is complex analytic or mixed, then KND reduces to Newton non-degeneracy introduced by Kouchnirenko \cite{Kouchnirenko1976} for complex analytic and by Oka \cite{Oka2010} for mixed functions.
\end{remark}

\begin{proposition} \label{prop: knd co -> iknd}
	Let $f$ be a convenient (S)KND real polynomial map (or mixed polynomial map). Then $f$ is (S)IKND.    
\end{proposition}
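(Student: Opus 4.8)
The plan is to take $D_i:=\Gamma(f^i)$ for $i=1,\dots,p$ and to verify Definition~\ref{def: ind} for this choice. Since $f$ is convenient, each $\Gamma(f^i)$ is a $C$-face diagram; being the Newton boundary of a polynomial it is bounded, so it meets every coordinate axis and $\Gamma_+(f^i)=\Gamma(f^i)+\R^n_{\geq 0}$. Condition~(i) is then immediate: $\supp(f^i)\subseteq\Gamma_+(f^i)=D_i+\R^n_{\geq 0}$. Moreover $D=D_1+\cdots+D_p$, the union of the compact faces of $P:=\sum_{i=1}^{p}(D_i+\R^n_{\geq 0})$, is a \emph{bounded} $C$-face diagram (every compact face of $P$ lies in the convex hull of its finitely many vertices). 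It remains to check condition~(ii), and condition~(ii') for the strong statement.

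The first lemma I would prove is: \emph{every inner face $\Delta$ of $D$ equals $\Delta(\w;D)$ for some $\w\in(\N^+)^n$}. Indeed, the normal cone $N_\Delta$ of $\Delta$ in $P$ is a rational convex cone contained in $\R^n_{\geq 0}$, of dimension at least one because $\dim\Delta\le n-1$. If $N_\Delta$ missed $\R^n_{>0}$ then, being a convex cone inside the closed orthant, it would have to lie in a single coordinate hyperplane $\{\w_j=0\}$ — otherwise, picking for each $j$ a vector of $N_\Delta$ with positive $j$-th coordinate and adding these vectors would produce an element of $N_\Delta\cap\R^n_{>0}$. But $N_\Delta\subseteq\{\w_j=0\}$ puts $e_j$ in the direction space of $\operatorname{aff}(\Delta)$, which forces $\Delta$ to be unbounded, contradicting boundedness of $D$. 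Hence $N_\Delta\cap\R^n_{>0}\ne\emptyset$, and any rational point of $\operatorname{relint}(N_\Delta)\cap\R^n_{>0}$, cleared of denominators, is an admissible $\w$. Fixing such a $\w$, the additivity of faces under Minkowski sums gives $\Delta=\Delta(\w;D_1)+\cdots+\Delta(\w;D_p)$, so in the canonical decomposition $\Delta=\Delta_1+\cdots+\Delta_p$ one has $\Delta_i=\Delta(\w;\Gamma(f^i))$, whence $f_\Delta=f_{\w}$. For $I=[n]$, condition~(ii) (resp.\ (ii')) is then exactly what (S)KND provides for the positive weight $\w$.

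For a general nonempty $I\subsetneq[n]$ with $\Delta\cap\R^I\ne\emptyset$, I would pass to the coordinate subspace $\K^I$: put $g^i:=f^i|_{\K^I}$, $g:=(g^1,\dots,g^p)$, $\w':=\w|_I\in(\N^+)^I$. From $\Delta\cap\R^I\ne\emptyset$ and $\Delta_i\subseteq\R^n_{\geq 0}$ one gets $\Delta_i\cap\R^I\ne\emptyset$ for every $i$; using this together with the identities $\Gamma_+(f^i)\cap\R^I=\Gamma_+(g^i)$ and $\Delta(\w;\Gamma(f^i))\cap\R^I=\Delta(\w';\Gamma(g^i))$, one checks that $f_\Delta$ restricted to $\K^I$ is the face map $g_{\w'}$. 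I would then prove $\Sigma\bigl(V(g_{\w'})\bigr)\cap(\K^*)^I=\emptyset$ by a "large weight" trick: for $M$ large enough the weight $\hat{\w}:=(\w',M,\dots,M)\in(\N^+)^n$ satisfies $\Delta(\hat{\w};\Gamma(f^i))=\Delta(\w';\Gamma(g^i))\subseteq\R^I$, so $f_{\hat{\w}}$ involves only the variables indexed by $I$ and, as such a polynomial, equals $g_{\w'}$; hence $V(f_{\hat{\w}})$ is the cylinder over $Z:=V_{\K^I}(g_{\w'})$ and
\[
\Sigma\bigl(V(f_{\hat{\w}})\bigr)\cap(\K^*)^n\ne\emptyset \iff \Sigma\bigl(V(g_{\w'})\bigr)\cap(\K^*)^I\ne\emptyset ,
\]
the left side being empty by KND. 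Finally, at any $\bm{a}\in(\K^*)^I$ the Jacobian of $f_\Delta$ has block shape $[\,Dg_{\w'}(\bm{a})\mid\ast\,]$, the columns for the $I$-variables being exactly those of $Dg_{\w'}$ (since $f_\Delta$ and $g_{\w'}$ agree on $\K^I$); because the relevant rank condition is monotone under adjoining coordinate directions — in the real, complex, or mixed setting alike — a rank drop of $Df_\Delta(\bm{a})$ forces one of $Dg_{\w'}(\bm{a})$, while $f_\Delta(\bm{a})=0$ gives $g_{\w'}(\bm{a})=0$. Therefore $\Sigma(V(f_\Delta))\cap(\K^*)^I\subseteq\Sigma(V(g_{\w'}))\cap(\K^*)^I=\emptyset$, and running the same computation with $\Sigma$ in place of $\Sigma(V(\cdot))$ yields $\Sigma(f_\Delta)\cap(\K^*)^I=\emptyset$ when $f$ is SKND. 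This establishes (ii) and (ii').

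The combinatorics of the first two paragraphs and the normal-cone lemma are short; the step I expect to be the real obstacle is the reduction to $\K^I$ in the last paragraph — carefully justifying the Newton-polyhedron identities $\Gamma_+(f^i)\cap\R^I=\Gamma_+(g^i)$, $\Gamma(f^i)\cap\R^I=\Gamma(g^i)$, and $\Delta(\hat{\w};\Gamma(f^i))\subseteq\R^I$ for $M$ large, and verifying that the block-triangular form of $Df_\Delta$ along $(\K^*)^I$ really does transport the non-degeneracy of the cylinder $f_{\hat{\w}}$ on $(\K^*)^n$ back to $f_\Delta$ on $(\K^*)^I$, i.e.\ that the $\ast$-block cannot restore the rank.
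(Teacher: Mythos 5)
Your proposal is correct and takes essentially the same approach as the paper: choose $D_i=\Gamma(f^i)$, and reduce condition~(ii) (resp.~(ii')) on $(\K^*)^I$ to the (S)KND hypothesis on $(\K^*)^n$ by observing that $f_\Delta$ only involves the $I$-variables on $\Delta\cap\R^I$ and lifting singular points to the full torus. The paper compresses the argument (it directly sets the $I^c$-coordinates to $1$ and tacitly treats $f_\Theta$ as a face function for a positive weight), whereas you make explicit the normal-cone lemma producing a positive weight for each inner face and the ``large-weight'' trick $\hat{\w}=(\w',M,\dots,M)$ realizing the restricted face function as $f_{\hat{\w}}$, which is a useful filling-in of steps the paper leaves implicit.
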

\begin{proof}   
	Suppose that $f$ is SKND. Let $D_j := \Gamma(f^j)$ denote the $C$-face diagram of $f^j$, for $j = 1, \dots, p$, and let $D := D_1 + \cdots + D_p$. Suppose that $f$ is not SIKND with respect to these $C$-face diagrams. Then, there exists an inner face $\Theta'$ of $D$, and a subset $I \subseteq [n]$, such that $\Theta := \Theta' \cap \mathbb{R}^I \neq \emptyset$ and 
	$\Sigma(f_{\Theta'}) \cap (\K^*)^I \neq \emptyset.$ Since $I = [n]$ implies that $f$ is not SKND, we may assume that $I \subsetneq [n]$. Let $\boldsymbol{w} \in (\mathbb{Q}_{>0})^n$ be such that $\Theta'=\Delta(\boldsymbol{w};D)$, define $\boldsymbol{q} \in (\mathbb{Q}_{>0})^n$, by $q_i = w_i$ if $i \in I$ and $q_i = w_i+1$ if $i \not\in I$. Then $\Theta = \Delta(\boldsymbol{q};D)$ and $d := d( \boldsymbol{w};D) = d( \boldsymbol{q};D) $. 
	Let $\Theta' = \Theta'_1 + \cdots + \Theta'_p$ and $\Theta = \Theta_1 + \cdots + \Theta_p$ be the unique decompositions corresponding to the Minkowski sum of faces, where $\Theta'_j,\Theta_j \subset D_j$, for each $j = 1, \dots, p$.
	
	Let \( \boldsymbol{x}_1 \in \Sigma(f_{\Theta'}) \cap (\mathbb{K}^*)^I \). Thus, there exists \( (a_1, \dots, a_p) \in \mathbb{K}^p \setminus \{0\} \) such that
	\[
	\sum_{j=1}^p a_j \nabla f^{\, j}_{\Theta'_j} (\boldsymbol{x}_1) = 0.
	\]
	
	Define \( \boldsymbol{x}_0 = (x_{0,1}, \dots, x_{0,n}) \in \mathbb{K}^n \) by setting \( x_{0,i} = x_{1,i} \) if \( i \in I \), and \( x_{0,i} = 1 \) otherwise. 
	\begin{claim} \label{claim:4} $f^{\, j}_{\Theta_j}(\boldsymbol{x}_0)= f^{\, j}_{\Theta'_j}(\boldsymbol{x}_1)$ for $j=1,\dots,p$.        
	\end{claim}
	\begin{proof}
		We have $f^{\, j}_{\Theta'_j} =f^{\, j}_{\Theta_j} +  (f^{\, j}_{\Theta'_j} - f^{\, j}_{\Theta_j})$. Let $h=a_{\boldsymbol{\nu}} \boldsymbol{x}^{\boldsymbol{\nu}}$ be a monomial of $(f^{\, j}_{\Theta'_j} - f^{\, j}_{\Theta_j})$, then $d(\boldsymbol{q}'; a_{\boldsymbol{\nu}} \boldsymbol{x}^{\boldsymbol{\nu}}) > d(\boldsymbol{w}; a_{\boldsymbol{\nu}} \boldsymbol{x}^{\boldsymbol{\nu}})=d(\boldsymbol{w}; D_j)$, this means, that there is $i \in [n] \setminus I$, such that $\nu_i \not = 0$. Hence $h(\boldsymbol{x}_1)=a_{\boldsymbol{\nu}} \boldsymbol{x}_1^{\boldsymbol{\nu}} =0$, this implies that $(f^{\, j}_{\Theta'_j} - f^{\, j}_{\Theta_j})(\boldsymbol{x}_1)=0$. The result follows from $f^{\, j}_{\Theta_j}(\boldsymbol{x}_0) =f^{\, j}_{\Theta_j}(\boldsymbol{x}_1)$.
	\end{proof}
	
	Then, by Claim \ref{claim:4} we have  
	\[
	\frac{\partial f^{\, j}_{\Theta_j}}{\partial x_i}(\boldsymbol{x}_0) =\frac{\partial f^{\, j}_{\Theta_j}}{\partial x_i}(\boldsymbol{x}_1) =
	\begin{cases}
		\frac{\partial f^{\, j}_{\Theta'_j}}{\partial x_i}(\boldsymbol{x}_1) & \text{if } i \in I, \\
		0 & \text{if } i \notin I.
	\end{cases}
	\]      
	Hence,  
	\[
	\sum_{j=1}^p a_j \nabla f^{\, j}_{\Theta_j} (\boldsymbol{x}_0) = \sum_{j=1}^p a_j \nabla f^{\, j}_{\Theta_j} (\boldsymbol{x}_1) = 0,
	\]  
	which implies that \( \boldsymbol{x}_0 \in \Sigma(f_{\Theta}) \cap (\mathbb{K}^*)^n \), contradicting the fact that \( \Sigma (f_{\Theta}) \cap (\mathbb{K}^*)^n = \emptyset \). 
	
	Now, if we have that $f$ is KND then $\boldsymbol{x}_1 \in V(f_{\Theta'})$. Since $f_{\Theta'}(\boldsymbol{x}_1) = f_{\Theta}(\boldsymbol{x}_1) = f_{\Theta}(\boldsymbol{x}_0) $, it follows that \( \boldsymbol{x}_0 \in \Sigma(f_{\Theta}) \cap V(f_{\Theta}) \cap (\mathbb{K}^*)^n \). Contradicting the fact that \( \Sigma (f_{\Theta})\cap V(f_{\Theta}) \cap (\mathbb{K}^*)^n = \emptyset \).
\end{proof}
\subsection{SWH and SRWH maps}
\begin{definition}\label{def:wh}
	A real polynomial function (resp. mixed polynomial function) \(f: \mathbb{K}^n \to \mathbb{K}\) is called \textbf{weighted homogeneous} (WH) (resp. \textbf{radially weighted homogeneous} (RWH)) of weight-type \((\boldsymbol{w};d)=(w_1, \dots, w_n; d)\), $w_i,d \in \mathbb{Q}_{>0}$, if for any \(\lambda \in \mathbb{R} \setminus \{0\}\), the following condition holds:
	\begin{equation*}
		f(\lambda^{w_1}x_1, \dots, \lambda^{w_n}x_n) = \lambda^{d} f(x_1, \dots, x_n).
	\end{equation*}
	
	A real polynomial map (resp. mixed polynomial map) \(f = (f^1, \dots, f^p): \mathbb{K}^n \to \mathbb{K}^p\), where \(n \geq p\), is called \textbf{weighted homogeneous} (WH) (resp. \textbf{radially weighted homogeneous} (RWH)) of weight-type \((\boldsymbol{w};\boldsymbol{d}=(d_1,\dots,d_p))\) if each coordinate function \(f^i\), for \(i = 1, \dots, p\), is WH (resp. RWH) of weight-type \((\boldsymbol{w}; d_i)\). 
	
\end{definition}
\begin{definition}\label{Def:SWH}
	A real polynomial map (resp. mixed polynomial map) \( f = (f^1, \dots, f^p): \mathbb{K}^n  \to \mathbb{K}^p \) is \textbf{semi-weighted homogeneous} (SWH) (resp. \textbf{semi-radially weighted homogeneous} (SRWH)) of weight-type \((\boldsymbol{w};\boldsymbol{d})\) if:
	\begin{itemize}
		\item[(i)] The map \(f_{\w}\) is WH (resp. RWH) of weight-type \((\boldsymbol{w};\boldsymbol{d})\) and \(\Sigma({f_{\w}}) \cap V({f_{\w}})=\{0\}\),
		\item[(ii)] for each $j=1,\dots,p,$ \( d(\boldsymbol{w}; f^j - f_{\boldsymbol{w}}^j) > d_j. \)
	\end{itemize}
\end{definition}
This definition requires that the variety \( V(f_{\boldsymbol{w}}) \) has an isolated singularity at the origin, which is a weaker condition than requiring \( f_{\boldsymbol{w} } \) itself to have an isolated singularity, as in the classical definition.

\begin{proposition} \label{prop:SWH->IKND}
	SWH maps are IKND. If, in addition, $\Sigma(f_{\boldsymbol{w}})=\{0\}$, then it is SIKND. 
\end{proposition}
\begin{proof}
	Let \( f \) be a SWH map of weight type \( (\boldsymbol{w}; (d_1, \dots, d_p)) \). For each \( i = 1, \dots, p \), define the \( C \)-face diagram \( D_i := D(J_i) \), where \( J_i = \{ \ell_{\boldsymbol{w}} / d_i \} \). Since \( \Sigma(f_{\boldsymbol{w}}) \cap V(f_{\boldsymbol{w}})= \{0\} \), it follows that \( f \) is IKND with respect to the collection of \( C \)-face diagrams \( D_1, \dots, D_p \). To get the condition SIKND we use $\Sigma(f_{\boldsymbol{w}})=\{0\}$.
\end{proof}

\begin{ex}\label{ex1:SIKND}
	Consider the real polynomial  
	\(f(x_1,x_2,x_3)= x_{1}^{\,12} + x_1 x_{2}^{\,4} x_3 +( x_{2}^{\,3} - x_{3}^{\,2} )^2.
	\) We claim that \( f \) is SIKND but not SKND. To see that \( f \) is \emph{not} SKND, take the weight vector \( \w = (2,2,3) \). Then the face function related to $\w$ is  
	\(
	f_{\w}(\boldsymbol{x}) = (x_{2}^{\,3} - x_{3}^{\,2} )^2,
	\)  
	which clearly has a singular locus containing the point \( (1,1,1) \in \mathbb{R}^{\{1,2,3\}} \). Therefore,  
	\(
	\Sigma({f_{\boldsymbol{w}}}) \cap \mathbb{R}^{\{1,2,3\}} \neq \emptyset,
	\)
	and \( f \) is not SKND.
	To check whether \( f \) is SIKND, consider the directional function given by
	\(
	\ell(\boldsymbol{\nu}) = \left\langle \left(\tfrac{1}{12}, \tfrac{2}{12}, \tfrac{3}{12} \right), \boldsymbol{\nu} \right\rangle.
	\)
	The corresponding $C$-face diagram \( D(J), \ J=\{\ell\} \) has a unique 1-face, so \( f \) is SIKND if and only if \( \Sigma(f_{D(J)}) = \{0\} \). Since all monomials of \( f \) lie on this face, we have \( f = f_{D(J)} \), and thus it suffices to show that the system of partial derivatives has only the trivial solution:
	\begin{align}
		\frac{\partial f}{\partial x_1}(\boldsymbol{x}) &= 12 x_{1}^{\,11} +  x_{2}^{\,4} x_3 = 0, \label{eq:fx1} \\
		\frac{\partial f}{\partial x_2}(\boldsymbol{x}) &= 2 x_{2}^{\,2} \left(2x_1x_2x_3 + 3( x_{2}^{\,3} - x_{3}^{\,2} )\right) = 0, \label{eq:fx2} \\
		\frac{\partial f}{\partial x_3}(\boldsymbol{x}) &= x_1 x_{2}^{\,4} - 4x_3 ( x_{2}^{\,3} - x_{3}^{\,2} ) = 0. \label{eq:fx3}
	\end{align}
	The origin \( (0,0,0) \) clearly satisfies the system. It is also easy to check that for any $i \in \{1,2,3\}$, if $x_i = 0$, then $\boldsymbol{x} = (0,0,0)$. Therefore, we assume that there is a nontrivial solution \( \boldsymbol{x} \in \mathbb{R}^{\{1,2,3\}} \), i.e., all \( x_i \ne 0 \).
	
	From \eqref{eq:fx2}, dividing by \( 2 x_{2}^{\,2} \ne 0 \), we get:
	\begin{equation}\label{eq:ex17}
		x_1 = \frac{-3( x_{2}^{\,3} - x_{3}^{\,2} )}{2x_2x_3}.
	\end{equation}
	Substitute \eqref{eq:ex17} into \eqref{eq:fx3}:
	\[
	( x_{2}^{\,3} - x_{3}^{\,2} ) \left( \frac{-3 x_{2}^{\,3} }{2x_3} - 4x_3 \right) = 0.
	\]
	Thus, either  \( x_{2}^{\,3} = x_{3}^{\,2} \), or 
	\(-3 x_{2}^{\,3} = 8x_{3}^{\,2} \).
	If \( x_{2}^{\,3} = x_{3}^{\,2}\), then \(x_1 = 0 \) by \eqref{eq:ex17}, contradicting \( x_1 \ne 0 \). Thus, \( x_{2}^{\,3} = -\frac{8}{3}x_{3}^{\,2} \). Then, we get: \[x_1 = \frac{-3(x_{2}^{\,3} - x_{3}^{\,2} ) }{2x_2x_3} = \frac{-3(-\frac{8}{3}x_{3}^{\,2} - x_{3}^{\,2} )}{2x_2x_3} = \frac{9x_3^{\,2}}{2x_2x_3} = \frac{9x_3}{2x_2}. \]
	Substitute this value of \( x_1 \) into \eqref{eq:fx1}:
	\[
	12 \left(\frac{9x_3}{2x_2} \right)^{11} + x_{2}^{\,4} x_3 = 0.
	\]
	Clearing denominators and simplifying gives a contradiction unless \( x_3 = 0 \), which is not allowed. Therefore, no nontrivial solution exists, and we conclude that
	$\Sigma(f) = \{0\}$, so \( f \) is SIKND.
\end{ex}

\subsection{IKND mixed maps}
Let $\tilde{f}:\R^{2n} \rightarrow \R^{2p}$ be a polynomial map, $\sigma$ and $\varsigma$ be permutations of $n$ and $p$ elements, respectively. We identify $\C^n \cong \R^{2n}$ by setting $x_i=x_{\sigma(2i-1) } + \mathrm{i} x_{\sigma(2i) }$ for $i=1,\dots,n$. Note that $ x_{\sigma(2i-1)} = (x_i + \overbar{x_{i}} ) /2$ and $ x_{\sigma(2i)} = (x_i - \overbar{x_{i}} ) /(2\mathrm{i})$.
We denote ${f}_{\sigma,\varsigma} : \C^n \rightarrow \C^p $ the mixed polynomial map defined by  component functions $${f}^j_{\sigma,\varsigma}(\boldsymbol{x}, \boldsymbol{\overbar{x}}) = \tilde{f}^{\varsigma(2j-1)} \left(\boldsymbol{x}_{\sigma}\right)+\mathrm{i} \tilde{f}^{\varsigma(2j)}\left(\boldsymbol{x}_{\sigma}\right), \, j=1, \dots, p, $$
where $\boldsymbol{x}_\sigma = (x_{\sigma(1)},\dots,  x_{\sigma(2n)})$.

\begin{lemma} \label{lemma: IND M -> IKND}
	Let $\tilde{f}: \R^{2n} \rightarrow \R^{2p}$ be a polynomial map. If there are permutations $\sigma$ and $\varsigma$ of $n$ and $p$ elements, respectively, such that ${f}_{\sigma,\varsigma}: \C^n \rightarrow \C^p$ is (S)IKND mixed polynomial map, then $\tilde{f}$ is (S)IKND.
\end{lemma}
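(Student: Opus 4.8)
The plan is to build $C$-face diagrams for $f$ by transporting those of the mixed map $\tilde f := \tilde f_{(\sigma,\varsigma)}$ along the linear ``pair-sum'' map $P\colon\R^{2n}\to\R^n$, $P(\bm{a})_i = a_{\sigma(2i-1)}+a_{\sigma(2i)}$, and then translating the non-degeneracy conditions through the real/mixed dictionary. Let $\tilde D_1,\dots,\tilde D_p$, with $\tilde D_j = D(\tilde J_j)$, be $C$-face diagrams witnessing the (S)IKND property of $\tilde f$. Since $P$ sends $\R^{2n}_{>0}$ into $\R^{n}_{>0}$, the set $P^{*}\tilde J_j := \{\tilde\ell\circ P : \tilde\ell\in\tilde J_j\}$ lies in $\mathcal J$, and I set $D_{\varsigma(2j-1)} = D_{\varsigma(2j)} := D(P^{*}\tilde J_j)$ for $j=1,\dots,p$; one checks directly that $D_k = P^{-1}(\tilde D_{j(k)})\cap\R^{2n}_{\ge0}$ and $\ell_{J_k} = \ell_{\tilde J_{j(k)}}\circ P$, where $j(k)$ is the index with $k\in\{\varsigma(2j(k)-1),\varsigma(2j(k))\}$. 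Substituting $z_i = x_{\sigma(2i-1)}+\mathrm i x_{\sigma(2i)}$ turns each monomial $\z^{\bm{\nu}}\bar{\z}^{\bm{\mu}}$ into a sum of real monomials $\bm{x}^{\bm{a}}$ with $P(\bm{a})=\bm{\nu}+\bm{\mu}$; applied to $\tilde f^j_{(\sigma,\varsigma)} = f^{\varsigma(2j-1)}+\mathrm i f^{\varsigma(2j)}$ this gives $\supp(f^{\varsigma(2j-1)}),\ \supp(f^{\varsigma(2j)})\subseteq P^{-1}(\supp\tilde f^j)$, so condition (i) of Definition~\ref{def: ind} for $f$ follows at once from condition (i) for $\tilde f$.

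Next I relate inner faces of $D := D_1+\cdots+D_{2p}$ to inner faces of $\tilde D := \tilde D_1+\cdots+\tilde D_p$. Each $D_k$ is invariant under swapping the two coordinates of any pair $\{\sigma(2i-1),\sigma(2i)\}$, because $\ell_{J_k}$ factors through $P$. Hence, if a weight vector $\bm{w}$ selects a face $\Delta' = \Delta(\bm{w};D)$ with Minkowski decomposition $\Delta' = \sum_k\Delta'_k$, $\Delta'_k := \Delta(\bm{w};D_k)$, and $\bm{w}$ is \emph{not} constant on some pair $\{\sigma(2i_0-1),\sigma(2i_0)\}$, then an exchange argument (shift mass onto the cheaper of the two coordinates; this leaves $P$ unchanged and strictly lowers $\langle\bm{w},\cdot\rangle$) forces every $\Delta'_k$, hence $\Delta'$, into the coordinate hyperplane of the costlier coordinate. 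Therefore, if $\Delta'$ is inner, $\bm{w}$ is pair-constant and descends to $\tilde{\bm{w}}\in\R^n$ with $\langle\bm{w},\bm{a}\rangle=\langle\tilde{\bm{w}},P\bm{a}\rangle$; since the objective $\langle\bm{w},\cdot\rangle$ and the constraint set $D_k$ then depend on $\bm{a}$ only through $P\bm{a}$, and $P$ maps $D_k$ onto $\tilde D_{j(k)}$, one obtains $\Delta'_k=P^{-1}(\tilde\Delta_{j(k)})\cap\R^{2n}_{\ge0}$ with $\tilde\Delta_j := \Delta(\tilde{\bm{w}};\tilde D_j)$, whence $P(\Delta')\subseteq 2\tilde\Delta$ for $\tilde\Delta := \tilde\Delta_1+\cdots+\tilde\Delta_p = \Delta(\tilde{\bm{w}};\tilde D)$. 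From $P(\Delta')\subseteq 2\tilde\Delta$ I deduce that $\tilde\Delta$ is an inner face of $\tilde D$ (otherwise $\Delta'$ would sit in a coordinate hyperplane) and that, for any nonempty $I\subseteq[2n]$ with $\Delta'\cap\R^I\ne\emptyset$, the set $I' := \{i : \{\sigma(2i-1),\sigma(2i)\}\cap I\ne\emptyset\}$ is nonempty and satisfies $\tilde\Delta\cap\R^{I'}\ne\emptyset$.

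The core step is matching the face systems. Writing $\tilde f^j = (\tilde f^j)_{\tilde\Delta_j} + R^j$ with $\supp R^j\subseteq\supp\tilde f^j\setminus\tilde\Delta_j$, the substitution carries $\supp\big((\tilde f^j)_{\tilde\Delta_j}\big)$ into $P^{-1}(\tilde\Delta_j)$ and $\supp R^j$ into its complement; since $\Delta'_{\varsigma(2j-1)}=\Delta'_{\varsigma(2j)}=P^{-1}(\tilde\Delta_j)\cap\R^{2n}_{\ge0}$, comparing supports term by term yields $f^{\varsigma(2j-1)}_{\Delta'_{\varsigma(2j-1)}} = \re\big((\tilde f^j)_{\tilde\Delta_j}\big)$ and $f^{\varsigma(2j)}_{\Delta'_{\varsigma(2j)}} = \im\big((\tilde f^j)_{\tilde\Delta_j}\big)$ after substitution, with no monomial gained or lost to cancellation. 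Equivalently, writing $\Phi\colon\R^{2n}\xrightarrow{\sim}\C^n$ and $\Psi\colon\R^{2p}\xrightarrow{\sim}\C^p$ for the real-linear isomorphisms induced by $\sigma$ and $\varsigma$, we have $\tilde f_{\tilde\Delta}\circ\Phi = \Psi\circ f_{\Delta'}$. Now suppose $f$ is not IKND with respect to the $D_k$: there are an inner face $\Delta'$ of $D$, a nonempty $I\subseteq[2n]$ with $\Delta'\cap\R^I\ne\emptyset$, and a point $\bm{x}^0\in\Sigma\big(V(f_{\Delta'})\big)\cap(\R^{*})^I$. Because $\Sigma(V(\cdot))$ is cut out by vanishing together with a rank condition on the real Jacobian, and both conditions are preserved under composing with the invertible $\Phi,\Psi$, the point $\bm{z}^0 := \Phi(\bm{x}^0)$ lies in $\Sigma\big(V(\tilde f_{\tilde\Delta})\big)\cap(\C^{*})^{I'}$; as $\tilde\Delta$ is an inner face of $\tilde D$ with $\tilde\Delta\cap\R^{I'}\ne\emptyset$, this contradicts condition (ii) for $\tilde f$. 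Hence $f$ is IKND; the SIKND case is identical, with $\Sigma(V(\cdot))$ replaced by $\Sigma(\cdot)$ and the vanishing condition dropped.

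The step I expect to be the main obstacle is the combinatorial face bookkeeping of the second and third paragraphs: proving that an inner face of $D$ forces its selecting weight vector to be pair-constant (so that it genuinely descends to a face of $\tilde D$), and then checking the face-function identities carefully enough that cancellations in the real expansion of $(\tilde f^j)_{\tilde\Delta_j}$ cannot spoil the identification $f_{\Delta'}\cong\tilde f_{\tilde\Delta}$. Once these are settled, transferring $\Sigma(V(\cdot))$ and the index translation $I\rightsquigarrow I'$ across the linear isomorphisms is routine.
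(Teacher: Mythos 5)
Your proof is correct and follows essentially the same route as the paper: build the $C$-face diagrams for $f$ by pulling back those of $\tilde f$ through the pair-sum map, identify inner faces of $D$ with inner faces of $\tilde D$ via pair-constant weight vectors, match the face systems, and transport the non-degeneracy across the real-linear isomorphisms. The only notable difference is that you supply an explicit exchange argument for why a weight vector selecting an inner face of $D$ must be pair-constant, a point the paper asserts without proof; your worry about cancellation in the real expansion is harmless, since cancellation could only shrink $\supp(f^k)$ inside $P^{-1}(\supp \tilde f^j)$, which does not affect either the support inclusion or the identity $\tilde f_{\tilde\Delta}\circ\Phi = \Psi\circ f_{\Delta'}$.
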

\begin{proof}
	Suposse that $\sigma$ and $\varsigma $ the trivial permutations. Let ${D}_1, \dots, {D}_p \subset (\R_{\geq 0})^n$ be the mixed $C$-face diagrams of ${f}^1, \dots, {f}^p$, respectively, and let ${J}_j$ be the set of linear functions defining ${D}_j$, for $j=1\dots,p$. Each ${\ell} \in {J}_j$ can be extended to a linear function $\tilde{\ell}$ on $\R^{2n}$ as follows: 
	$$\tilde{\ell}(\boldsymbol{\nu}) = {\ell} ( \nu_{1} + \nu_2,\nu_{3} + \nu_4,  \dots, \nu_{2n-1} + \nu_{2n}). $$ 
	Let $\tilde{J}_j$ be the set of these extended linear functions, and define the $C$-face diagrams $$\tilde{D}_{2j-1}=\tilde{D}_{2j}= \{ \boldsymbol{\nu} \in \mathbb{R}^{2n}_{\geq 0} \mid \tilde\ell_{{J}_j}(\boldsymbol{\nu}) = 1 \}, \text{ for } j=1, \dots, p.$$
	We have $\supp (\tilde{f}^{k}) \subset \tilde{D}_k+ \R^{2n}_{\geq 0}$, for $k=1, \dots, 2p$. Indeed, consider a monomial $\boldsymbol{x}^{\boldsymbol{\nu}} $ of $\tilde{f}^{k}$. It originates from a real (if $k$ is even) or imaginary (if $k$ is odd) part of a mixed monomial $\boldsymbol{x}^{\boldsymbol{\mu}}\boldsymbol{\overbar{x}}^{\boldsymbol{\xi}} $, so that
	$${\nu}_{2i-1}+{\nu}_{2i} = {\mu}_{i}+{\xi}_{i}, \text{ for } i=1,\dots, n. $$
	Thus, for every linear function $\tilde\ell$ defining a face of $\tilde{D}_k$, we obtain
	\begin{align*}
		\tilde{\ell}(\boldsymbol{\nu}) &= {\ell} (\nu_{1} + \nu_2,\nu_{3} + \nu_4,  \dots, \nu_{2n-1} + \nu_{2n})\\
		&= {\ell} ({\mu}_{1}+{\xi}_{1},{\mu}_{2}+{\xi}_{2},  \dots, {\mu}_{n}+{\xi}_{n}) \\
		& \geq 1,
	\end{align*}
	which implies that $\supp (\tilde{f}^{k}) \subset \tilde{D}_k+ \R^{2n}_{\geq 0}$.
	\begin{claim}\label{claim: mix inner}
		Let $\tilde{\Theta}$ be an inner face of $\tilde{D}=\tilde{D}_1+ \dots + \tilde{D}_{2p}$. If $\boldsymbol{w} \in (\mathbb{Q}_{\geq 0})^{2n}$ is such that ${\tilde\Theta} = \Delta (\boldsymbol{w}; \tilde{D})$, then $w_{2k-1} = w_{2k}$ for $k=1,\dots,n$.
	\end{claim}
	\begin{proof}
		Let $\boldsymbol{w} \in (\mathbb{Q}_{>0})^{2n}$ be such that $\tilde{\Theta} = \Delta (\boldsymbol{w}; \tilde{D})$. Define $K = \{ k \in [n] \mid w_{2k} \not= w_{2k-1} \}$. For each $k \in K$, define $m(k)= 2k-1$ if $w_{2k-1} < w_{2k}$ and $m(k)= 2k$ if $w_{2k-1} >  w_{2k}$, analogously, define $M(k)= 2k-1$ if $w_{2k-1} > w_{2k}$ and $M(k)= 2k$ if $w_{2k-1} < w_{2k}$. Let $m(K):= \{m(k) \in [2n] \mid k \in K \}$.
		
		Suppose by contradiction that $K \not= \emptyset$. Let $k \in K$. Since $\tilde\Theta$ is an inner face, there is $j \in [2p]$ such that $M(k) \in I_{\tilde\Theta_j}$. Choose $\boldsymbol{\nu} \in \tilde\Theta_j$ with $\nu_{M(k)} \not= 0$ and define $\boldsymbol{\nu}'$ by setting $\nu'_{M(k)}= 0$, $\nu'_{m(k)}= \nu_{2k-1} + \nu_{2k} $ and $ \nu_i'= \nu_{i}$ for $ i \in [n] \setminus \{m(k),M(k)\}$. Note that $\boldsymbol{\nu}' \in \tilde{D}_j$ since for all linear function $\tilde\ell$ defining $\tilde{D}_j$ we have that $\tilde\ell(\boldsymbol{\nu})=\tilde\ell(\boldsymbol{\nu'})$. But
		\begin{align*}
			\langle \boldsymbol{w};\boldsymbol{\nu}'\rangle < \langle\boldsymbol{w}; \boldsymbol{\nu} \rangle=d(\boldsymbol{w};\tilde{D}_j),
		\end{align*}
		which is a contradiction.
	\end{proof}
	Thus, by Claim \ref{claim: mix inner} there is a one-to-one correspondence between the inner faces of \( D \) and \( \tilde{D} \), such that if \( \tilde{\Theta} \) is an inner face of \( \tilde{D} \) and \( \Theta \) is the corresponding inner face of \( D \), then \( \tilde{f}_{\tilde{\Theta}} = f_\Theta \).
	For $\tilde{I} \subseteq [2n]$ define $$ {I}:= \{ i \in [n] \mid k=2i \text{ or } k=2i-1 \text{ for some } k \in \tilde{I} \}  .$$
	Suppose that ${\tilde\Theta} \cap \R^{\tilde{I}} \not= \emptyset$, then we have that ${\Theta} \cap \R^{{I}}  \not= \emptyset $. If ${f}$ is an IKND mixed polynomial map then $\Sigma ({f}_{{\Theta}})\cap V({f}_{{\Theta}}) \cap (\C^*)^{{I}} = \emptyset $. Note that $(\R^*)^{\tilde{I}} \subseteq (\C^*)^{{I}} $. Therefore
	$\Sigma(\tilde{f}_{{\tilde{\Theta}}})\cap V(\tilde{f}_{{\tilde{\Theta}}}) \cap   (\R^*)^{\tilde{I}}  = \emptyset$. Analogously, if ${f}$ is SIKND mixed polynomial map, $\Sigma (\tilde{f}_{{\tilde{\Theta}}}) \cap   (\R^*)^{\tilde{I}}  = \emptyset$.
\end{proof}
\begin{remark}\label{contraexemploreciprocalLemma210}
	The reciprocal of Lemma~\ref{lemma: IND M -> IKND} is not true. In fact, the polynomial map  $\tilde{f}: \R^n \to \R^2$ given by $\tilde{f}(\boldsymbol{x})=(2x_1,2x_2(\sum_{i=1}^n x_{i}^{\,2} ))$ is IKND with respect to $\tilde{D}=D(J_1)+D(J_2)$, where $J_1=\{ \ell_1(\nu):=\sum_{i=1}^n \nu_i\}$ and $J_2=\{\ell_2(\nu):=\frac{\ell_1(\nu)}{3}\}$. However, for any permutation $\sigma$ and $\varsigma$, the mixed polynomial function ${f}_{\sigma,\varsigma}$ is not IKND. This is because $(\C^*)^{\{i\}}$, where $\sigma(2i-1)=1$ or $\sigma(2i)=1$, is always a set of critical points for any $({f}_{\sigma,\varsigma})_{\Delta}$.
\end{remark}
\subsection{IKND functions in three variables}
For a real polynomial function (or mixed polynomial function) $f$, we define \(D_{\boldsymbol v}(f):=\Gamma(S_{\boldsymbol v}(f))\) where, 
\[S_{\boldsymbol v}(f):=\supp (f) \cup \{v_{i} \boldsymbol{e}_i \mid i\in I_{\mathrm{nc}}(f),\ v_{i} \in \mathbb{Q}\},\  \boldsymbol v=\sum_{i \in I_{\rm{nc}}(f)}v_{i} \boldsymbol{e}_i.\]
\begin{lemma}\label{lemma:diagramnonconvcase} Let $f:\K^3 \to \K$ be a real polynomial function (or mixed polynomial function). If $f$ satisfies:
	\begin{itemize}
		\item[(i)] for any  $i \in I_{\rm{nc}}(f)$, there exists an index $j\neq i$ and $\boldsymbol{p} \in \supp (f) \cap \Gamma(f)\cap \R^{\{i,j\}}$ with $p_j=1$,
		\item[(ii)] $\supp (f) \cap \Gamma(f) \cap \R^{\{i,j\}}\neq \emptyset$ for all distinct indices $i,j \in \{1,2,3\}$. 
	\end{itemize}
	Then, there exists $\boldsymbol{v} \in \mathbb{Q}^n$ such that, for any $i \in I_{\rm{nc}}(f)$, there exists a unique inner face $\Delta(i)$ of $D_{\boldsymbol{v}}(f)$ satisfying $\Delta(i)\cap \R^{\{i\}} \neq \emptyset$. Moreover, $\Delta(i)$ is a 2-face with $\Delta \cap \R^{\{i,j\}}\neq \emptyset$ for any $j\neq i$ and  $f_{\Delta(i)}(\boldsymbol{x})=M_i(x_i,x_j)+N_i(x_i,x_k),$ where $i,j,k$ are distinct and $\supp (M_i)=\bm{p}$.  
\end{lemma}
\begin{proof}
	We will define \(\boldsymbol{v}\). Let $i \in I_{\rm nc}(f)$, by Condition~(i) there is $j \not= i$ and a point $\boldsymbol{p} \in \R^{\{i,j\}}$ such that $p_j=1$. By Condition~(ii) there is $\supp (f) \cap \R^{\{i,k\}} \not= \emptyset$ with $k \in \{1,2,3\} \setminus \{i,j\}$. Let $\boldsymbol{q} \in \Gamma(f) \cap \R^{\{i,k\}} \not= \emptyset$ be such that $q_k = \min \{ \nu_k \mid \boldsymbol{\nu} \in \Gamma(f) \cap \R^{\{i,k\}}\}$. Define $v_i$ such that
	\begin{align} 
		v_i &> \frac{\nu_k q_i + \nu_j p_iq_k-\nu_i q_k}{\nu_k+q_k(\nu_j-1)}, \label{eq: faceunique}
	\end{align}
	for all $\boldsymbol{\nu} \in \supp (f)$ with $(\nu_k,\nu_j) \not= (0,1)$.
	Let $\boldsymbol{w}_i  \in (\R_{>0})^3 $ be such that $w_i = 1, w_k=\frac{v_i - q_i}{q_k}$, $w_j=v_i - p_i$ and $\Delta(i):= \Delta(\boldsymbol{w}_i ; D_{\boldsymbol{v}}(f))$. The we have that for all $\boldsymbol{\nu}\in \supp (f)$, $\langle \boldsymbol{w}_i,\boldsymbol{\nu} \rangle \geq v_i$ and $v_i = \langle \boldsymbol{w}_i , \boldsymbol{p} \rangle = \langle \boldsymbol{w}_i , \boldsymbol{q}\rangle$. In fact, if $\boldsymbol{\nu} \in (\R^*)^3 \cap \supp (f)$
	\begin{align*}
		\langle \boldsymbol{\nu} , \boldsymbol{w} \rangle &= \nu_j w_j + \nu_k w_k + \nu_i \\
		&= (\nu_j -1 ) w_j + \nu_k w_k + (\nu_i - p_i) + w_j + p_i \\
		&=v_i\left((\nu_j-1)+\frac{\nu_k}{q_k}\right) - p_i\nu_j - \frac{\nu_k}{q_k}q_i+\nu_i+ v_i \\
		&> v_i.
	\end{align*} 
	The last inequality follows from \eqref{eq: faceunique}.
\end{proof}

\begin{proposition}\label{prop:kndsurfaces}
	Let $f: \K^3 \to \K$ be a KND real polynomial function (or mixed polynomial function) as Lemma~\ref{lemma:diagramnonconvcase} such that their associated polynomials $M_i, \ i \in I_{\rm nc}(f)$, satisfy $V(M_i)\cap (\K^*)^{3}=\emptyset$. Then,  $f$ is IKND.  \end{proposition}
\begin{proof}
	We first prove the case where $f$ is a mixed polynomial function. The proof for the real case is analogous.
	
	If $I_{\rm{nc}}(f)=\emptyset$, then by Proposition~\ref{prop: knd co -> iknd}, $f$ is IKND. We therefore suppose that $I_{\rm{nc}}(f)\neq\emptyset$.
	
	Take an index $i \in I_{\rm{nc}}(f)$. By Lemma~\ref{lemma:diagramnonconvcase} there is a unique 2-face $\Delta(i)$ in the $C$-face diagram $D$ satisfying $\Delta(i) \cap \R^{\{i\}}\neq \emptyset$ and  \(f_{\Delta(i)}(\boldsymbol{x})=M_i(x_i,x_j)+N_i(x_j,x_k),\) where $i,j,k$ are distinct and $M_i$ has the form $M_i(x_i,x_j)=x_jA(x_i)+\overbar{x_{j}}B(x_i)$. The face  $\Delta(i)$ also satisfies $\Delta(i) \cap \R^{\{i,j\}}\neq \emptyset$, $\Delta(i) \cap \R^{\{i,k\}}\neq \emptyset$, and $\Delta(i) \cap \R^{\{k\}}\neq \emptyset$ (assuming $N_i$ does not depend on $x_i$ or $\overbar{x_{i}}$). 
	
	We must show that $f$ satisfies (ii) in Definition~\ref{def: KND} for $\Delta(i)$. 
	We will proceed by contradiction. Assume that $\boldsymbol{a} \in  \Sigma (f_{\Delta(i)})\cap V(f_{\Delta(i)})$, by the characterization of critical points of mixed polynomial functions (see. \cite{Oka2010}) that means, there is $\lambda \in S^1$ such that:
	\begin{equation}\label{eq:1propikndsurfaces}
		f_{\Delta(i)}(\boldsymbol{a})=0 \text{ and } \left( \overbar{\frac{\partial f_{{\Delta(i)}}}{\partial x_1}}(\boldsymbol{a}),\overbar{\frac{\partial f_{{\Delta(i)}}}{\partial x_2}}(\boldsymbol{a}),\overbar{\frac{\partial f_{{\Delta(i)}}}{\partial x_3}}(\boldsymbol{a}) \right)=\lambda \left( \frac{\partial f_{{\Delta(i)}}}{\partial \overbar{x_{1}}}(\boldsymbol{a}),\frac{\partial f_{{\Delta(i)}}}{\partial \overbar{x_{2}}}(\boldsymbol{a}),\frac{\partial f_{{\Delta(i)}}}{\partial \overbar{x_{3}}}(\boldsymbol{a}) \right).
	\end{equation}
	We suppose that $\boldsymbol{a}$ violates the IKND criterion for $f$ on the face $\Delta(i)$, thus it satisfies some of the following cases:
	\begin{enumerate}
		\item  $\boldsymbol{a} \in (\C^*)^{\{i,j,k\}}$, then 
		\eqref{eq:1propikndsurfaces} implies that $\boldsymbol{a}$ belongs to $\Sigma(f_{\Delta(i)}) \cap V(f_{\Delta(i)}) \cap (\C^{*})^3$. This contradicts the non-degeneracy of $f$ for the face $\Delta^*$ of $\Gamma(f)$ that satisfies $f_{\Delta^*}=f_{\Delta(i)}$.   
		
		\item $\boldsymbol{a} \in (\C^*)^{\{i\}}$, then \eqref{eq:1propikndsurfaces} implies $|A(a_i)|=|B(a_i)|$. This implies that $V(M_i)\cap (\C^*)^{3}\neq \emptyset$, which is a contradiction. 
		\item  $\boldsymbol{a} \in (\C^*)^{\{i,j\}}$, then \eqref{eq:1propikndsurfaces} implies that  $\boldsymbol{a}$ belongs to $\Sigma(M_{{\Delta(i)}}) \cap V(M_{{\Delta(i)}})$.
		Thus $\boldsymbol{b} \in (\C^*)^3$, with $b_i=a_i$, $b_j=a_j$, belongs to $\Sigma(M) \cap  V(M) \cap (\C^{*})^3$. This contradicts the non-degeneracy of $M$.  
		
		\item  $\boldsymbol{a} \in (\C^*)^{\{i,k\}}$, then arguing analogously to the case (3), the point 
		$\boldsymbol{b} \in (\C^*)^3$, with $b_i=a_i$, $b_k=a_k$, belongs to $\Sigma(N) \cap V(N) \cap (\C^{*})^3$. This contradicts the non-degeneracy of $N$.
		
		\item  $\boldsymbol{a} \in \C^{\{k\}}$ and ${\Delta(i)} \cap \R^{\{k\}}\neq \emptyset$ (thus $N$ does not depend on $x_i$ or $\overbar{x_{i}}$). As the case (4), the point $\boldsymbol{b} \in (\C^*)^3$, with $b_k=a_k$, belongs to $\Sigma(N) \cap V(N) \cap (\C^{*})^3$. This contradicts the non-degeneracy of $N$. 
	\end{enumerate}
	Therefore, $f$ satisfies the condition of IKND for the inner face ${\Delta(i)}$. Hence, $f$ is IKND with respect to $D$.
\end{proof}
\section{Deformations of (S)IKND maps}\label{section3}
We consider deformations of a map $f$ of the form $F: \K^n \times \K \rightarrow \K^p$, with $F(\boldsymbol{x}, 0 ) = f(\boldsymbol{x}) $, $F(0, \varepsilon)=0$. Namely, if $\K=\R$ then $f$ is a real polynomial map, if $\K=\C$ then $f$ is a mixed polynomial map. For each value $\varepsilon\in \K$, let $F_\varepsilon(\boldsymbol{x}) := F(\boldsymbol{x} , \varepsilon)$.
\begin{definition}\label{def:nocoalescing}
	Let $K\subset \K$ be a connected set containing the origin. We say that the deformation has \textbf{no coalescing of critical point along $K$} if there exists $\epsilon > 0$ such that $\Sigma(F_\varepsilon) \cap B_\epsilon(0) \subseteq \{0\}$ for all $\varepsilon \in K$. Similarly, we say that the deformation $F$ has \textbf{weak no coalescing of critical points along $K$} if there exists $\epsilon > 0$ such that $\Sigma({F_\varepsilon}) \cap V({F_\varepsilon}) \cap B_\epsilon(0) \subseteq \{0\}$ for all $\varepsilon \in K$. If the deformation has (weak) no coalescing of critical point along any compact connected set containing the origin $K \subset \K$ then we simply say that the deformation has (weak) no coalescing of critical point.
\end{definition}
For $I \subset [n]$ the map $\pi^I: \mathbb{R}^n \to \mathbb{R}^{|I|}$ denotes the projection onto the coordinates in $I$, with kernel $(\pi^I)^{-1}(0)=\mathbb{R}^{I^c}$.
\begin{lemma} \label{lemma: find inner}
	Let $\boldsymbol{q} \in (\mathbb{Q}_{\geq 0})^n$. Let $I:= \{i \in [n] \mid q_i \not= 0\}$. Define $d:=d(\pi^I(\boldsymbol{q}); \pi^I(D\cap \R^I))$ and $\Theta:=(\pi^I)^{-1}(\Delta(\pi^I(\boldsymbol{q}); \pi^I(D\cap \R^I)))\cap \R^I$. There exists a weight vector $\boldsymbol{q'} \in (\mathbb{Q}_{>0})^n$ with $q'_i=q_i$ for $i \in I$, such that $d(\boldsymbol{q}';D)=d$ and $\Delta(\boldsymbol{q}'; D)=\Theta$. Furthermore, there is a weight vector $\boldsymbol{w} \in (\mathbb{Q}_{>0})^n$ with $w_i=q'_i$ for $i \in I_{\Theta}$, and $w_i < q'_i$ for $i \in I_{\Theta}^{\,c} $, such that $d(\boldsymbol{w};D)=d$ and $\Theta':=\Delta(\boldsymbol{w};D)$ is an inner face of $D$ satisfying $\Theta=\Theta' \cap \R^{I_{\Theta}}$.
\end{lemma}
\begin{proof}
	Take $S \subset (\mathbb{Q}_{>0})^n$ satisfying $\Gamma(S)=D$. We define $\boldsymbol{q}' \in (\mathbb{Q}_{>0})^n$ such that $q'_i=q_i$ for $i \in I$ and  $q'_i>\frac{d}{\alpha}$  for $i \in I^c$, where $\alpha= \min \{ \pi_i(\boldsymbol{s}) \mid \boldsymbol{s} \in  S\cap \R^{I^c}, \, i \in I^c,\, \pi_i(\boldsymbol{s})>0\}$.    
	
	Let $\boldsymbol{p} \in D \setminus \Theta$. Then, there exist $a_1,\dots, a_k \in \R_{>0}$ and $\boldsymbol{s}_1,\dots,\boldsymbol{s}_k \in S$ such that $\boldsymbol{p}=a_1 \boldsymbol{s}_1+ \cdots +a_k \boldsymbol{s}_k \text{ and } \sum_{i=1}^k a_i=1.$
	If $\boldsymbol{p} \in \R^{I}$, then
	\(\langle \boldsymbol{p},\boldsymbol{q}' \rangle =\sum_{i \in I}p_iq_i>d\) as $\boldsymbol{p} \not\in \Theta$. 
	If $\boldsymbol{p} \not\in \R^{I}$, then $\sum_{i \not\in I}p_i\geq \alpha.$ Thus,  
	\[ \langle \boldsymbol{p},\boldsymbol{q}'\rangle > \sum_{i \in I}p_iq_i+\left(\sum_{i \not\in I}p_i\right)\frac{d}{\alpha} \geq d.\]
	Therefore, $\Delta(\boldsymbol{q}';D)=\Theta$ and $d(\boldsymbol{q}';D)=d$.
	
	If $I_\Theta = [n]$ then $\Theta $ is an inner face and we can take $\boldsymbol{w}=\boldsymbol{q}$. Suppose that $I_\Theta \not= [n]$. Let $q'_{\min}=\min_{i \in I_{\Theta}^{\,c} } \{q'_i\}  $. For $r \in [0, q'_{\min})$, define $\boldsymbol{q}'(r)= (q'_1(r), \dots, q'_n(r))$, where $q'_i(r)= q_i$ if $i \in I_\Theta$ and $q'_i(r) = q'_i - r$ if $i \not\in I_\Theta$.
	Define the following continuous function $\tilde{d}: [0, q'_{\min}) \cap \mathbb{Q} \rightarrow \R_{\geq0} $, $\tilde{d}(r)= d(\boldsymbol{q}'(r) ;D).$ Observe that $\tilde{d}(0)= d$, $\tilde{d}$ is non-increasing, and for all $r>0$ small enough $\tilde{d}(r)=d$ and $\tilde{d}(q'_{\min}-r)<d$. 
	There is a unique continuous extension $\tilde{\tilde{d}} $ of $\tilde{d}$ to the real numbers, clearly $\tilde{\tilde{d}}(r) = \min \{\langle \boldsymbol{q}'(r) ,\boldsymbol{\nu} \rangle \mid \boldsymbol{\nu} \in D\}$. Let $r_1 := \max \{r \in [0, q'_{\min}) \mid \tilde{\tilde{d}}(r)=d \}$. Since $\tilde{\tilde{d}}(r_1) = d$ is a rational number and $\boldsymbol{\nu} \in (\mathbb{Q}_{>0})^n$, it follows that $r_1 \in \mathbb{Q}$, which implies that $\boldsymbol{q}'_1:=\boldsymbol{q}'(r_1)$ has rational components. Hence $d(\boldsymbol{q}'_1; D)=d$ and clearly $q'_i = q'_{1,i}$ for $i \in I_\Theta$ and $q'_i > q'_{1,i}$ for $i \in I_{\Theta}^{c} $.
	
	Let $\Theta_1 = \Delta(\boldsymbol{q}'_1; D) $. Then $I_{\Theta} \subsetneq I_{\Theta_1}$. In fact, if $I_\Theta = I_{\Theta'}$, then $\Theta = \Theta_1$, and for all vertex $\boldsymbol{\nu}$ of $D \setminus \Theta$, $\langle \boldsymbol{q}(r_1); \boldsymbol{\nu} \rangle > d$, then for rational $\epsilon>0$ small enough $\langle \boldsymbol{q}(r_1+ \epsilon); \boldsymbol{\nu} \rangle > d$, this implies that $d(\boldsymbol{q}'(r_1+\epsilon); D)=d$, hence $\tilde{d}(r_1+\epsilon) =d$, which is a contradiction. Clearly $\Theta = \Theta_1 \cap \R^{I_\Theta}$.
	
	By recursion, we obtain $\boldsymbol{q}'_1,\boldsymbol{q}'_2, \dots, \boldsymbol{q}'_l \in (\mathbb{Q}_{>0})^n$ such that $\Theta_i:=\Delta(\boldsymbol{q}'_i ; D) $, $\Theta_{k} = \Theta _{k+1}\cap \R^{I_{\Theta_k}}$, $I_{\Theta_k} \subsetneq \Theta_{k+1}$, $k=1, \dots, l-1$ and $I_{\Theta_l} = [n]$. Then $\Theta'=\Theta_l$ is an inner face such that $\Theta = \Theta' \cap \R^{I_\Theta}$. Taking $\boldsymbol{w}= \boldsymbol{q}_l$ we have that $q'_i = w_{i}$ for $i \in I_\Theta$ and $q'_i > w_{i}$ for $i \in I_\Theta^c$.
\end{proof}
\begin{theorem} \label{Th:Main1}
	Let \( F: \K^n \times \K \to \K^p \) be a deformation of a real polynomial map or mixed polynomial map \( f \). If there exist $C$-face diagrams $D_1, \dots, D_p$ satisfying:
	\begin{itemize}
		\item[(i)] the map \( f \) is (S)IKND with respect to the \(C\)-face diagrams \( \{D_j\}_{j=1,\dots,p} \),
		\item[(ii)] for each $j=1,\dots,p,$ and $\varepsilon$, \(d(\boldsymbol{w}; F^{\, j}_\varepsilon-f^j) \geq d(\boldsymbol{w}; D_j), \text{ for all }  \boldsymbol{w} \in \mathcal{P}(D_j).\)\end{itemize}
        Then, the deformation $F$ has (no coalescing) weak no coalescing of critical points along some neighborhood $K \subset \K$ of the origin. If the inequalities in (ii) are strict, then the deformation has (no coalescing) weak no coalescing of critical points. 
\end{theorem}
\begin{proof}
	By \cite[Proposition~3.1]{chen_tibar_2012} there is a connected compact set $K \subset \K$ containing the origin such that $F_\varepsilon $ is IKND with respect to the \(C\)-face diagrams \(D_1,\dots,D_p\) for each $\varepsilon \in K $. Suppose that $\{F_\varepsilon\} $ has coalescing of critical points then there is a sequence of $(\boldsymbol{x}_k,\varepsilon_k) \in \R^n \times K$ such that, $\boldsymbol{x}_k \rightarrow 0$, $\varepsilon_k \rightarrow \varepsilon_0 $ when $k \rightarrow \infty$ and
	$
	\mathrm{rank} \begin{pmatrix}
		J_{F_{\varepsilon_k}}(\boldsymbol{x}_k)
	\end{pmatrix} < p,
	$
	which holds if and only if there exist \( (\lambda_{k,1}, \dots, \lambda_{k,p}) \in \mathbb{R}^{p} \setminus \{0\} \) such that 
	$
	\sum_{j=1}^p \lambda_{k,j} \nabla F^{\, j}_{\varepsilon_k}(\boldsymbol{x}_k) = 0.
	$    
	Hence, by the curve selection lemma, there exist curves \( (\gamma, \varepsilon): [0, \epsilon) \to \mathbb{R}^n \times [0,1] \) and \( \lambda_j: [0, \epsilon) \to \mathbb{R},\, j=1,\dots,p,\) such that \( (\gamma(s), \varepsilon(s)) \to (0, \varepsilon_0) \) as \( s \to 0 \), and
	\begin{equation}\label{eq: sing}
		\sum_{j=1}^p \lambda_j(s) \nabla F^{\, j}_{\varepsilon(s)}(\boldsymbol{\gamma}(s)) =0.
	\end{equation}
	
	We can write $\varepsilon(s)$ and $\lambda_j(s),\ j=1,\dots,p,$ as
	\begin{align*}
		\varepsilon(s)&=\varepsilon_0 + \varepsilon_1 s^r + \text{h.o.t.},  \text{ and } \ 
		\lambda_j(s) = c_j s^{\beta_j} + \text{h.o.t.}, 
	\end{align*}   
	where, $r, \beta_j \in \N$ and $ (c_1,\dots,c_p) \neq 0$. Let $I:= \{i \in [n]  \mid \gamma_i \not\equiv 0 \}$. If $i \in I$, then we can write $\gamma_i (s)$ as 
	$$\gamma_i(s)=x_{0,i} s^{q_i} + \text{h.o.t.},$$
	and if $i \not\in I$, define $q_i=0$ and $x_{0,i}=0$. Set $\boldsymbol{q} = (q_1,\dots,q_n)$, $\boldsymbol{x}_0=(x_{0,1},\dots, x_{0,n})$, $d:=d(\pi^I(\boldsymbol{q}); \pi^I(D\cap \R^I))$, and  $\Theta:=(\pi^I)^{-1}(\Delta(\pi^I(\boldsymbol{q}); \pi^I(D\cap \R^I)))\cap \R^I$.
	
	By Lemma \ref{lemma: find inner} there exist $\boldsymbol{q'},\boldsymbol{w} \in \mathbb{Q}_{>0}$ such that $q'_i=w_i=q_i$ for $i \in I_\Theta $, $q_i'>w_i$ for $i \in I_\Theta^c$, $\Theta':= \Delta( \boldsymbol{w} ; D)$ is an inner face and $\Theta =\Delta(\boldsymbol{q}' ; D)=\Theta' \cap \R^{I_\Theta}$.
	
	Let $d_j= d(\boldsymbol{w}; D_j)$, $\Theta_j = \Delta( \boldsymbol{q}' ; D_j) $  and $\Theta'_j = \Delta (\boldsymbol{w}; D_j) $ for $j=1,\dots,p$. Then $\Theta = \Theta_1+ \cdots+ \Theta_p $ and $\Theta' = \Theta'_1+ \cdots+ \Theta'_p $. Hence, $(F_\varepsilon)_\Theta  = ((F_{\varepsilon}^{\,1} )_{\Theta_1}, \dots, (F_{\varepsilon}^{\,p} )_{\Theta_p})$, $(F_\varepsilon)_{\Theta'}  = ((F_{\varepsilon}^{\,1} )_{\Theta'_1}, \dots, (F_{\varepsilon}^{\,p} )_{\Theta'_p})$. Define $\boldsymbol{x}_1=(x_{1,1}, \dots, x_{1,n})$ where $x_{1,i}=x_{0,i}$ for $i \in I_\Theta$ and $x_{1,i}=0$ for $i \in I_\Theta^c$.
	\begin{claim}\label{claimexpansion}
		For all $i \in [n]$ and $j \in [p]$:
		\begin{align}
			\frac{\partial F_{\varepsilon(s)}^{\,j} }{ \partial x_i} (\gamma (s)) 
			= \frac{\partial (F_{\varepsilon_0}^{\,j} )_{\Theta'_j} }{ \partial x_i } (\boldsymbol{x}_1) s^{d_j-w_i} +  \text{h.o.t.} \label{eq: 9}
		\end{align}  
	\end{claim}
	\begin{proof}
		
		Let $i \in [n]$ and $j \in [p]$. Suppose that $\frac{\partial F_{\varepsilon}^{\,j} }{\partial x_i} \not\equiv 0$. Let $h$ be a monomial of $ F^{\, j}_\varepsilon$ such that $\frac{\partial h}{\partial x_i} \not\equiv 0$. 
		
		\underline{Case I:} If $h$ is a monomial of $F^{\, j}_\varepsilon -(F_{\varepsilon}^{\,j} )_{\Theta_j'}$,
		$$d\left( \boldsymbol{q}' ; \frac{\partial h}{\partial x_i} \right) \geq d \left( \boldsymbol{w} ; \frac{\partial h}{\partial x_i}  \right) = d(\boldsymbol{w} ; h) - w_i > d_j - w_i. $$
		The first inequality follows from Lemma \ref{lemma: find inner}, since $q_k \geq w_k$ for all $k=1,\dots,n.$
		
		\underline{Case II:} If $h= f^{\, j}_{\boldsymbol{\nu}}$, where $\boldsymbol{\nu} \in S_1 := \left\lbrace  \boldsymbol{\nu} \in \text{supp}( F_{\varepsilon}^{\,j} ) \cap \Theta'_j: \frac{\partial \boldsymbol{x}^{\boldsymbol{\nu}}}{\partial x_i}(\boldsymbol{x}_1) = 0  \right\rbrace$. Then, there is $k \in I_{\Theta}^c$ such that $x_k$ is a factor of $\frac{\partial h}{\partial x_i}$. Hence, by Lemma \ref{lemma: find inner}, $q_k > w_k$ and 
		$$d \left( \boldsymbol{q}' ;  \frac{\partial h}{\partial x_i} \right)> d \left( \boldsymbol{w} ;  \frac{\partial h}{\partial x_i} \right)= d \left( \boldsymbol{w} ; h \right)  - w_i = d_j - w_i.$$
		
		\underline{Case III:} If $h= f^{\, j}_{\boldsymbol{\nu}}$, where $\boldsymbol{\nu} \in S_2 := \left\lbrace  \boldsymbol{\nu} \in \text{supp}( F_{\varepsilon}^{\,j} ) \cap \Theta'_j: \frac{\partial \boldsymbol{x}^{\boldsymbol{\nu}}}{\partial x_i}(\boldsymbol{x}_1) \not= 0  \right\rbrace$. Then, $\frac{\partial h}{\partial x_i}$ only depends on variables indexed by $I_{\Theta}$. Hence $\frac{\partial h}{\partial x_i} (\boldsymbol{x}_0) = \frac{\partial h}{\partial x_i}(\boldsymbol{x}_1)$ and by Lemma \ref{lemma: find inner}
		$$d \left( \boldsymbol{q}' ;  \frac{\partial h}{\partial x_i} \right) = d \left( \boldsymbol{w} ;  \frac{\partial h}{\partial x_i} \right) = d_j - w_i. $$
		
		Hence $d\left(\boldsymbol{q}' ; \frac{\partial F^{\, j}_\varepsilon}{\partial x_i}\right) \geq d_j - w_i$. Note that $( F_{\varepsilon}^{\,j} )_{\Theta'_j} = ( F_{\varepsilon}^{\,j} )_{S_2} +( F_{\varepsilon}^{\,j} )_{S_1} $. If $S_2 = \emptyset$, then $\frac{\partial ( F_{\varepsilon}^{\,j} )_{\Theta'_j}}{\partial x_i} (\boldsymbol{x}_1) =0$ and $ d\left(\boldsymbol{q}' ; \frac{\partial F^{\, j}_\varepsilon}{\partial x_i}\right) > d_j - w_i$. If $S_2 \not= \emptyset$ then $$\left(\frac{\partial F^{\, j}_\varepsilon}{\partial x_i}\right)_{\boldsymbol{q}'} (\boldsymbol{x}_0) = \frac{\partial ( F_{\varepsilon_0}^{\,j} )_{S_2}}{\partial x_i}(\boldsymbol{x}_0) =  \frac{\partial ( F_{\varepsilon_0}^{\,j} )_{S_2}}{\partial x_i}(\boldsymbol{x}_1) = \frac{\partial ( F_{\varepsilon}^{\,j} )_{S_2}}{\partial x_i}(\boldsymbol{x}_1) + \frac{\partial ( F_{\varepsilon}^{\,j} )_{S_1}}{\partial x_i}(\boldsymbol{x}_1) = \frac{\partial ( F_{\varepsilon}^{\,j} )_{\Theta'_j}}{\partial x_i}(\boldsymbol{x}_1).$$ 
	\end{proof}
	Let $m :=\min_{j\in[p] } \{d_j+\beta_j\} $ and  $J := \left\lbrace j \in [p] \mid d_j + \beta_j = m \right\rbrace$. Then, combining \eqref{eq: sing} and \eqref{eq: 9} we obtain 
	\begin{align*}
		\sum_{j \in J} c_j \frac{\partial ( F_{\varepsilon_0}^{\,j} )_{\Theta'_j}}{\partial x_i} (\boldsymbol{x}_1) s^{m-w_i} + \text{h.o.t.}= 0, \text{ for all } i = 1,\dots ,n.
	\end{align*}
	Hence, $\sum_{j \in J} c_j \nabla ( F_{\varepsilon_0}^{\,j} )_{\Theta'_j}(\boldsymbol{x}_1) =0. $ Therefore $\boldsymbol{x}_1 \in \Sigma((F_{\varepsilon_0})_{\Theta'}) \cap (\R^*)^{I_{\Theta}}$, which is a contradiction since $(F_{\varepsilon_0})$ is IKND, and then $F$ has no coalescing of critical points. 
	
	If we consider the condition $ F(\gamma(s), \varepsilon(s)) = 0 $ in the hypothesis of contradiction, using Claim \ref{claim:4} we get:
	\begin{align*}
		0= F(\gamma(s), \varepsilon(s)) &= (( F_{\varepsilon_0}^{\,1} )_{\Theta_1}(\boldsymbol{x}_0) s^{d_1}+\text{h.o.t.} ,\cdots,( F_{\varepsilon_0}^{\,p} )_{\Theta_p}(\boldsymbol{x}_0) s^{d_p} + \text{h.o.t.}   )\\
		&= (( F_{\varepsilon_0}^{\,1} )_{\Theta'_1}(\boldsymbol{x}_1) s^{d_1}+ \text{h.o.t.},\cdots,( F_{\varepsilon_0}^{\,p} )_{\Theta'_p}(\boldsymbol{x}_1) s^{d_p}+\text{h.o.t.}).
	\end{align*}
	Therefore, $\boldsymbol{x}_1 \in V((F_{\varepsilon_0})_{\Theta'})$, which produces a contradiction.
    
    If the inequalities in (ii) are strict, then \(F_\varepsilon\) is (S)IKND with respect to the \(C\)-face diagrams \(D_1,\dots,D_p\) for all \(\varepsilon \in \mathbb{K}\). In this case, we may choose any compact connected set \(K \subset \mathbb{K}\) containing the origin and carry out the same proof.
\end{proof}
Observe that each \( F_\varepsilon \) (resp. \( V(F_\varepsilon) \)) has an isolated singularity if \( f \) is SIKND (resp. if \( f \) is IKND)\footnote{In \cite{BodeSanchez2023}, the authors study, for mixed polynomial functions, different conditions that imply a (weak) isolated singularity (see \cite[Fig. 1]{BodeSanchez2023}), as well as the relationships between them.}. Therefore, by Milnor's result \cite{Milnor1968}, the link  
\[
L_{F_\varepsilon, r_\varepsilon} := V(F_\varepsilon) \cap \mathbb{S}_{r_\varepsilon}
\]  
is well-defined for all sufficiently small \( \rho_\varepsilon > 0 \), where \( \mathbb{S}_{r_\varepsilon} \) denotes the standard sphere of radius \( r_\varepsilon \) centered at the origin in \( \K^n \). Since the link does not depend on $r_\varepsilon>0$ being small enough, we denote the link of $F_\varepsilon$  by $L_{F_\varepsilon}$.  
However, in general, there is no reason to expect that the links \( L_{F_\varepsilon, r_\varepsilon} \) are equivalent as \( \varepsilon \) varies.
\begin{remark}
	It is important to note that SIKND maps constitute a wider class than convenient SKND maps, as SIKND includes maps that are KND but not SKND, as well as maps that are neither KND nor SKND.
\end{remark}

\section{Topological triviality and link-constancy in deformations}\label{section4}
In this section, we study invariants associated with the deformations introduced in the previous section. Our goal is to determine when the deformation is topologically trivial or when the corresponding links are ambient isotopic.
Let $F:\K^n \times \K \rightarrow \K^p $ be a deformation of a real polynomial map (or mixed polynomial map) $f$ with $F(\boldsymbol{x},0) = f(\boldsymbol{x})$ and  $F(0,\varepsilon) = 0$.

\begin{definition} \label{def:link-const}
	Let $K \subset \K$ be a connected set containing the origin. Suppose that for all $\varepsilon \in K $ we have that $F_\varepsilon$ has a well-defined link (in the sense of Milnor \cite{Milnor1968}). We say that the deformation $F$ is \textbf{link-constant along $K$} if for all $\varepsilon \in K$, $L_{F_\varepsilon}$ is ambient isotopic to $L_{F_0}$, that is, there exists a continuous map	$
	H : \mathbb{S}_\epsilon \times [0,1] \to \mathbb{S}_\epsilon$, where $L_{F_0,\epsilon}$ and  $L_{F_\varepsilon,\epsilon}$ are well-defined for sufficiently small $\epsilon>0$, 
	such that for each $t \in [0,1]$, the map $H_t := H(\cdot, t)$ is a homeomorphism,
	$H_0 = \text{id}$, and
	$
	H_1(L_{F_0}) = L_{F_\varepsilon}.
	$    If $K = \K$, we simply say that $F$ is link-constant.
\end{definition}
\begin{definition}
	Let $K \subset \K $ be a connected set containing the origin. We say that the deformation $F$ is \textbf{topologically trivial along $K$} if there is continuous family of homeomorphism germs $h_\varepsilon : (\K^n,0) \rightarrow (\K^n,0)$ such that $F_0 = F_\varepsilon \circ h_\varepsilon$ for all $\varepsilon \in K$. If $K = \K $, we simply say that the deformation is topologically trivial.
\end{definition}
We present two methods to determine the link-constancy of a deformation. The first one relies on the implicit function theorem, as used in Theorems \ref{Th:Main2}. The second method, Proposition~\ref{Prop:GD}, consists in proving the existence of a $\rho$-uniform Milnor radius, as done in Theorem~\ref{teo: knd co -> urm} and Theorem~\ref{th:nonice}.

Let $U$ be an open subset of $\mathbb{R}^n$ containing the origin, and let $\rho : U \rightarrow \mathbb{R}_{\geq 0}$ be a proper, nonnegative, real analytic function such that $\rho^{-1}(0) = \{0\}$.

\begin{definition}
	Let $K$ be a connected compact subset of $\K$ containing the origin. We say that the deformation $F$ has \textbf{$\rho$-uniform Milnor radius along $K$} if there is $\epsilon_0>0 $ such that for all $ 0<\epsilon \leq \epsilon_0$ and $\varepsilon \in K$, $V(F_\varepsilon)$ is transversal to $\rho^{-1}(\epsilon)$. If $F$ has $\rho$-uniform Milnor radius along any connected compact subset $K$ containing the origin, we simply say that $F$ has a $\rho$-uniform Milnor radius.
\end{definition}
\begin{proposition} \label{Prop:GD}
	Let $K$ be a connected compact subset of $\K$ containing the origin. If the deformation $F$ has $\rho$-uniform Milnor radius along $K$, then for any small enough $\epsilon>0$, there exists, $0<\eta \ll \epsilon $, such that
	\begin{equation} \label{eq: fibration} ({F} \times \mathrm{id})_| : (\rho^{-1}(\epsilon) \times K) \cap ({F} \times \mathrm{id})^{-1} (B_\eta^p \times K)  \rightarrow B_\eta^p \times K
	\end{equation}  
	is a locally trivial smooth fibration, where $({F} \times \mathrm{id})(\boldsymbol{x},\varepsilon) = (F_\varepsilon(\boldsymbol{x}),\varepsilon)$. In particular, $F$ is link-constant along $K$.
\end{proposition}
\begin{proof}
	Let $\epsilon_0>0 $ be such that for all $ 0<\epsilon < \epsilon_0$ and $\varepsilon \in K $, $F_{\varepsilon}^{\,-1} (0) $ is transverse to $\rho^{-1}(\epsilon)$. We have that 
	\[J_{({F} \times \mathrm{id})_|}(\boldsymbol{x},\varepsilon) = 
		\begin{pmatrix}
			J_{({F} \times \mathrm{id})}(\boldsymbol{x},\varepsilon) \\
			\begin{array}{cc}
				\nabla \rho (\boldsymbol{x})   &  0
			\end{array}
		\end{pmatrix} =\begin{pmatrix}
			J_{F_\varepsilon}(\boldsymbol{x}) & \partial F / \partial \varepsilon \\
			0 & 1 \\
			\nabla \rho (\boldsymbol{x}) & 0
		\end{pmatrix}.\]
	Then, for all $\varepsilon \in K$, $F_{\varepsilon}^{\,-1}(0)$ is transverse  $\rho^{-1}(\epsilon)$ if and only if $({F} \times \mathrm{id})^{-1}(0,\varepsilon)$ is transverse to $ \rho^{-1}(\epsilon) \times K$.
	Since transversality is an open property, for each $(0,\varepsilon) \in \{0\} \times K$, there exists $\eta_\varepsilon > 0$ such that for all $\boldsymbol{y} \in B^{p+1}_{\eta_\varepsilon}(0,\varepsilon) \subset \R^p \times K$, we have $$
	({F} \times \mathrm{id})^{-1}(\boldsymbol{y} ) \pitchfork_{\boldsymbol{y}} \rho^{-1}(\epsilon) \times K.
	$$
    
	By the compactness of $K$, there exists $\varepsilon_1, \dots, \varepsilon_k \in  K$ such that $\{0\}\times K$ is covered by $ \cup_{i=1}^k B^{p+1}_{\eta_{\varepsilon_i}} (0,\varepsilon_i)$. Let $\eta > 0$ be the Lebesgue number of this cover. Then, $B^p_\eta(0) \times K \subset \cup_{i=1}^k B^{p+1}_{\eta_{\varepsilon_i}}(0,\varepsilon_i)$. Thus, $({F} \times \mathrm{id})_| $ is a submersion on $({F} \times \mathrm{id})^{-1}(B_\eta^p \times K)$ and by Ehresmann's fibration theorem we have that \eqref{eq: fibration} is a locally trivial smooth fibration.
	
	For the second part, fix $\varepsilon \in K \setminus \{0\}$ note that \eqref{eq: fibration} gives a smooth isotopy between $\rho^{-1}(\epsilon) \cap V(F_0)$ and $\rho^{-1}(\epsilon) \cap  V(F_\varepsilon)$ in $\rho^{-1}(\epsilon) $, by the isotopy extension theorem \cite{Hirsch1976} we obtain the ambient isotopy $H: \rho^{-1}(\epsilon) \times [0,1] \rightarrow \rho^{-1}(\epsilon) $. From this, we will construct an ambient isotopy in the sphere between the link $L_{F_0} $ and $L_{F_\varepsilon}$. Denote $\rho_E: \K^n \rightarrow \R$ square of the distance function, $\rho_E(\boldsymbol{x}) = ||\boldsymbol{x} ||^2 $. 
	
	Let $\mathcal{S}$ be a Whitney stratification of $V= V(F_0F_\varepsilon)$.
	Let $\epsilon_0>0$ such that each strata of $V \cap B_{\epsilon_0}^*$ is transversal to $\rho_E|_{B_{\epsilon_0}^*}$ and $\rho|_{B_{\epsilon_0}^*}$, where $B_{\epsilon_0}^* = B_{\epsilon_0} \setminus \{0\}$. It is possible to refine the stratification $\mathcal{S}$ of $V$ in such a way that $\mathcal{S}' = \mathcal{S} \cup \{ B_\varepsilon \setminus V \}$ defines a Whitney stratification of the  $B_\varepsilon$. Shrinking $\epsilon_0 $ if necessary, we can suppose that no $\boldsymbol{x} \in B_{\epsilon_0}^
	* \setminus \{0\}$, $\nabla \rho (\boldsymbol{x})$ and $\nabla \rho_E (\boldsymbol{x})$ point in opposite directions (see. \cite[Proposition~2.5]{zzLooijenga_1984}).
	
	For $\epsilon_0>0$ small enough, we have that for all $(a,b) \in (\R_{\geq 0})^2 \setminus \{0\}$ each strata of $V(F_0 F_\varepsilon) \setminus \{0\}$ is transversal to $a\rho_E + b \rho $. Indeed, suppose that for some strata $X$ in $\mathcal{S}$ we have for all $n \in \N$ there is a point $x_{n} \in B_{1/n}^* \cap X$ such that $X$ is not transversal to $a \rho_E + b \rho $ at $x_{n}$, then by curve selection lemma there is an analytic curve $\gamma:[0,\eta ) \rightarrow \K^n$, with $\gamma (0)=0$ and $\gamma((0,\eta)) \subset X$, such that $X$ is not transversal to $a \rho_E + b \rho $ in $\gamma(t)$, $t \in (0,\eta)$. Note that $a\nabla \rho_E(\gamma(t)) + b \nabla \rho (\gamma(t)) \not= 0$. Since $ \frac{d}{dt}(\rho_E \circ \gamma)(t) =  \langle \nabla \rho_E (\gamma(t)), \gamma'(t) \rangle > 0$ and $ \frac{d}{dt}(\rho \circ \gamma)(t) =  \langle \nabla \rho (\gamma(t)), \gamma'(t) \rangle > 0$, $ \langle a\nabla \rho_E(\gamma(t)) + b \nabla \rho (\gamma(t)) , \gamma'(t)  \rangle > 0$, $t \in (0, \eta)$. Hence, $a \nabla \rho_E(\gamma(t)) + b \nabla \rho (\gamma(t))$ is not in the normal space of $X$ which implies that $a \rho_E + b \rho$ is transverse to $X$ at $\gamma(t)$, $t \in (0,\eta)$. 
	
	Let $0< \delta'< \delta < \epsilon_0$ be such that $\rho^{-1}([0,\delta']) \subset \rho_{E}^{\,-1}([0,\delta])$. Consider the open cover of $B_{\epsilon_0}$ given by $ \mathcal{U} = \{ U_1= \rho^{-1}((\delta',\infty)), U_2= \rho_{E}^{\,-1}([0,\delta)) \}$. Let $\{\psi_1 ,\psi_2\}$ be a partition of unity subordinate to $\mathcal{U}$ and define $\widehat{\rho} : B_{\epsilon_0}^* \rightarrow \R$, $\widehat{\rho} = \psi_1 \rho_E + \psi_2 \rho $. Since $(\psi_1(\boldsymbol{x}),\psi_2(\boldsymbol{x}) ) \in (\R_{\geq 0})^2$, then $\widehat{\rho}$ is tranversal to each strate $X$ of $\mathcal{S}$. Then, $\widehat{\rho}|_X$ is a submersion for each stratum $X$ of $\mathcal{S}$. Hence, by the Thom's first isotopy lemma we obtain a homeomorphism $h: \rho_{E}^{\,-1}(\delta) \rightarrow \rho^{-1}(\delta')$ such that $h(V \cap \rho_{E}^{\,-1}(\delta)) = V \cap \rho^{-1}(\delta')$. Taking $\epsilon = \delta$ we have that $$(h^{-1} \times \mathrm{id}) \circ H \circ h: \rho_{E}^{\,-1}(\delta) \times [0,1] \rightarrow \rho_{E}^{\,-1}(\delta)  $$
	is an ambient isotopy\footnote{If $V(F_0F_\epsilon) \setminus \{0\}$ is non-singular, $h$ is a diffeomorphism, and hence we obtain a smooth ambient isotopy.} between $L_{F_\varepsilon}$ and $L_{F_0}$.
\end{proof}
We establish, in the following two propositions, conditions for topological triviality by combining our main result, Theorem~\ref{Th:Main1}, with \cite[Corollary~1]{King1980} and subsequently with  \cite[Theorem~2.4]{zBekka2015}. In the first case, the no coalescing critical points is a sufficient condition to guarantee topological triviality. Recall that, in general, the fact that a deformation $F$ has no coalescing of critical points does not imply topological triviality (see \cite{King1980}). In the second case, no coalescing of critical points is combined with a condition of a $\rho$-uniform Milnor radius. These results in topological triviality extend the study explored in \cite{Damon1983,SaiaandRuas1997}.
\begin{proposition} \label{cor: top trivi}
	Let \( F: \K^n \times \K \to \K^p \) be a deformation of a polynomial map \( f \). Assume that $n-p\leq 2$ if $f$ is a real map and $n-p\leq 1$ if $f$ is a mixed polynomial map. If there exist $C$-face diagrams $D_1, \dots, D_p$ satisfying:
	\begin{itemize}
		\item[(i)] the map \( f \) is SIKND with respect to the \(C\)-face diagrams \( \{D_j\}_{j=1,\dots,p} \),
		\item[(ii)] for each $j=1,\dots,p,$ and $\varepsilon$, \(d(\boldsymbol{w}; F^{\, j}_\varepsilon -f^j) \geq d(\boldsymbol{w}; D_j), \text{ for all }  \boldsymbol{w} \in \mathcal{P}(D_j).\)\end{itemize}
	Then, the deformation is topologically trivial along some neighborhood $K \subset \K$ of the origin. If the inequalities in (ii) are strict, then the deformation is topologically trivial.
\end{proposition}
\begin{proof}
	By Theorem~\ref{Th:Main1} \(F\) has no coalescing of critical points along some compact connected subset \(K \subset \mathbb{K}\) containing the origin. If the inequalities in (ii) are strict, then \(F\) has no coalescing of critical points. Thus, the result follows by applying \cite[Corollary~1]{King1980}. The mixed case follows by an additional application of Lemma~\ref{lemma: IND M -> IKND}.
\end{proof}
\begin{proposition} \label{Prop:toptrivialwithrho}
	Let \( F: \K^n \times \K \to \K^p \) be a deformation of a real polynomial map (or mixed polynomial map) \( f \).  If there exist $C$-face diagrams $D_1,\dots,D_p$ satisfying:
	\begin{itemize}
		\item[(i)] the map \( f \) is SIKND with respect to the \(C\)-face diagrams \( \{D_j\}_{j=1,\dots,p} \),
		\item[(ii)] for each $j=1,\dots, p,$ and $\varepsilon$, \(d(\boldsymbol{w}; F^{\, j}_\varepsilon -f^j) \geq d(\boldsymbol{w}; D_j), \text{ for all }  \boldsymbol{w} \in \mathcal{P}(D_j)\),
		\item[(iii)] the deformation $F$ has a $\rho$-uniform Milnor radius.
	\end{itemize} 
    Then, the deformation is topologically trivial along some neighborhood $K \subset \K$ of the origin. If the inequalities in (ii) are strict, then the deformation is topologically trivial.
\end{proposition}
\begin{proof}
	By Theorem~\ref{Th:Main1} \(F\) has no coalescing of critical points along some compact connected subset \(K \subset \mathbb{K}\) containing the origin. If the inequalities in (ii) are strict, then \(F\) has no coalescing of critical points. Then, the result follows by applying \cite[Theorem~2.4]{zBekka2015}. The mixed case follows by an additional application of Lemma~\ref{lemma: IND M -> IKND}. 
\end{proof}

The following result describes certain deformations of IKND maps (including convenient KND maps) that are link-constant. Moreover, under the SIKND condition, these deformations are topologically trivial (see \cite{Raimundo2005,saia2008} for related results).
\begin{theorem} \label{teo: knd co -> urm}
	Let \( F: \K^n \times \K \to \K^p \) be a deformation of a real polynomial map (or mixed polynomial map) \( f \). Suppose that there exist $C$-face diagrams $D_1,\dots,D_p$ and a connected compact set $K \subset \K$ containing the origin such that:
	\begin{itemize}		
		\item[(i)] the map $f_D$ is KND, where $D=D_1+\cdots+D_p,$
		\item[(ii)] the map $f$ is IKND  with respect to the \(C\)-face diagrams \( \{D_j\}_{j=1,\dots,p} \),
      \item[(iii)]  for each $j=1,\dots, p,$ and $\varepsilon$, \(d(\boldsymbol{w}; F^{\, j}_\varepsilon-f^j) \geq d(\boldsymbol{w}; D_j), \text{ for all }  \boldsymbol{w} \in \mathcal{P}(D_j)\),
		\item[(iv)] For all $I \subset [n]$ and inner face $\Delta(\boldsymbol w;D), \ \boldsymbol w \in (\mathbb{Q}_{>0})^n$,  such that $\Delta(\boldsymbol w;D) \cap \R^{I} \not= 0$ and $(f_{\Delta(\boldsymbol w;D)})^I \equiv 0$ there is $i \in I$ satisfying 
		\(w_{i} \leq w_{j},\text{ for all } j.\)
	\end{itemize}
      Then, the deformation is link-constant along some neighborhood $K \subseteq \K$ of the origin. If the inequalities in (ii) are strict, then the deformation is link constant. Furthermore, if, in addition, $f$ is SIKND with respect to \( \{D_j\}_{j=1,\dots,p} \), then the deformation is topologically trivial along the respective set $K$ or $\K$.
\end{theorem}
\begin{proof}
    By (i), (ii) and \cite[Proposition~3.1]{chen_tibar_2012} there is a connected compact set $K \subset \K$ containing the origin such that $F_\varepsilon $ is IKND with respect to the \(C\)-face diagrams \(D_1,\dots,D_p\) for each $\varepsilon \in K $ and \((F_\varepsilon)_D\) is KND.
	By Proposition~\ref{Prop:GD}, the link-constancy follows by proving the $\rho$-uniform Milnor radius for some analytic function $\rho$ defining the origin. Suppose by contradiction that the deformation $F$ has no $\rho_E$-uniform Milnor radius, thus by applying the curve selection lemma we find:
	\begin{equation}\label{eq:zeros}
		F^{\, 1}_{\varepsilon(s)}(\gamma(s))=\cdots= F^{\, p}_{\varepsilon(s)}(\gamma(s))=0,
	\end{equation}
	\begin{equation}\label{eq: milnorset}
		\sum_{j=1}^p \lambda_j(s) \nabla F^{\, j}_{\varepsilon(s)}(\gamma(s)) =\lambda_{p+1}(s) \gamma(s),
	\end{equation}
	where $(\gamma(s), \varepsilon(s)) \to (0, \varepsilon_0) \text{ as }  s \to 0$, $ \lambda_j(s) = c_j s^{\beta_j} + \text{h.o.t.}, \ j=1,\dots,p+1,\ (c_1,\dots,c_{p+1}) \neq 0$, and 
	\begin{align*}\gamma_i(s)= \begin{cases}
			x_{0,i} s^{q_i} + \text{h.o.t.}, &\text{if } i \in I= \{i \in [n] \mid \gamma_i \not\equiv 0 \},\\
			0 , & \text{if } i \not\in I.
		\end{cases}
	\end{align*}
	Set $\boldsymbol{q} = (q_1,\dots,q_n)$  and  $\boldsymbol{x}_0=(x_{0,1},\dots, x_{0,n})$  with
	\begin{align*}q_i= \begin{cases}
			q_i, &\text{if } i \in I,\\
			0 , & \text{if } i \not\in I,
		\end{cases}
		\text{ and } x_{0,i}= \begin{cases}
			x_{0,i}, &\text{if } i \in I,\\
			0 , & \text{if } i \not\in I.
		\end{cases}
	\end{align*}
	By Lemma~\ref{lemma: find inner}, we can find positive weight vectors $\boldsymbol{q}'$ and $\boldsymbol{w}$ such that $\Theta := \Delta(\boldsymbol{q}';D) = \Theta_1+\cdots+\Theta_p$ and $\Theta' := \Delta(\boldsymbol{w}; D) = \Theta'_1+\cdots+\Theta'_p$ is an inner face of $D$. These faces satisfy $\Theta' \cap \mathbb{R}^{I_\Theta} = \Theta$. Furthermore, $d := d(\boldsymbol{q}'; D) = d(\boldsymbol{w}; D)$, where $d = d_1+\cdots+d_p$ and $d_j= d(\boldsymbol{q}'; D_j) = d(\boldsymbol{w}; D_j)$ for $j=1,\dots,p$. A key property of these weight vectors is that $w_i=q'_i=q_i$ for $i \in I_{\Theta}$ and $w_i < q_i'$ for $i \in I_{\Theta}^c$. 
	
	Define the point $\boldsymbol{x}_1$ by setting  $x_{1,i}=x_{0,i}$ for $i \in I_\Theta$ and $x_{1,i}=0$ for $i \in I_\Theta^c$.
	By Condition~(iii) applied in \eqref{eq:zeros}, we find that: 
	\begin{align*}
		F_{\varepsilon(s)}(\gamma(s)) &= (( F_{\varepsilon_0}^{\,1} )_{\Theta_1}(\boldsymbol{x}_0) s^{d_1}+\text{h.o.t.} ,\cdots,( F_{\varepsilon_0}^{\,p} )_{\Theta_p}(\boldsymbol{x}_0) s^{d_p} + \text{h.o.t.}   )\\
		&= (( F_{\varepsilon_0}^{\,1} )_{\Theta'_1}(\boldsymbol{x}_1) s^{d_1}+ \text{h.o.t.},\cdots,( F_{\varepsilon_0}^{\,p} )_{\Theta'_p}(\boldsymbol{x}_1) s^{d_p}+\text{h.o.t.}   ).
	\end{align*}
	
	Let $m :=\min \{d_j+\beta_j \mid j \in  I \}$ and $J := \left\lbrace j \mid d_j + \beta_j = m \right\rbrace$. By applying Claim~\ref{claimexpansion} in \eqref{eq: milnorset} we obtain 
	\begin{align}\label{eq:unif.Milnor}
		\sum_{j \in J} c_j \frac{\partial ( F_{\varepsilon_0}^{\,j} )_{\Theta'_j}}{\partial x_i} (\boldsymbol{x}_1) s^{m-w_i} +\text{h.o.t.} = c_{p+1} x_{0,i} s^{q_i + \beta_{p+1}} + \text{h.o.t.}, \quad \text{for all } i.
	\end{align}
	Set $I' := \{i \mid m- \beta_{p+1} = q_i + w_i \}$. By Theorem~\ref{Th:Main1}, $F$ has weak no coalescing of critical points, thus $c_{p+1} \not=0$, $m- \beta_{p+1} \leq q_i + w_i$ for all $i$, and $I' \not= \emptyset$. We consider two cases based on whether $I_\Theta$ intersects $I'$ or not: 
	
	\underline{Case I: $I' \cap I_\Theta \not= \emptyset $}. By \eqref{eq:unif.Milnor} we get: 
	\begin{equation}\label{eq:case11}
		\sum_{j \in J} c_j  \frac{\partial ( F_{\varepsilon_0}^{\,j} )_{\Theta'_j}}{\partial x_i}(\boldsymbol{x}_1) = c_{p+1}x_{0,i}, \text{ for all } i \in I' \cap I_{\Theta}
	\end{equation}
	and 
	\begin{equation}\label{eq:case12}
		\sum_{j \in J} c_j  \frac{\partial ( F_{\varepsilon_0}^{\,j} )_{\Theta'_j}}{\partial x_i}(\boldsymbol{x}_1) = 0, \text{ for all } i \in I_{\Theta} \setminus I'.
	\end{equation}
	For $j \in J$ we have the Euler's identities: 
	$$d_j ( F_{\varepsilon_0}^{\,j} )_{\Theta'_j}(\boldsymbol{x}_1)=\sum_{i \in I_\Theta}  \frac{\partial ( F_{\varepsilon_0}^{\,j} )_{\Theta'_j}}{\partial x_i}(\boldsymbol{x}_1) x_{1,i} w_i.$$
	Given that $\boldsymbol{x}_1 \in V((F_{\varepsilon_0})_{\Theta'})$ and $x_{1,i}=x_{0,i}$ for $i \in I_{\Theta}$, we can apply Euler's identities along with \eqref{eq:case11} and \eqref{eq:case12} to derive:
	\begin{align*}
		0&=\sum_{j \in J }c_jd_j ( F_{\varepsilon_0}^{\,j} )_{\Theta'_j}(\boldsymbol{x}_1) = \sum_{j \in J} c_j \sum_{i \in I_\Theta}  \frac{\partial ( F_{\varepsilon_0}^{\,j} )_{\Theta'_j}}{\partial x_i}(\boldsymbol{x}_1) x_{1,i} w_i
		=\sum_{i \in I' \cap I_\Theta} c_{p+1} (x_{1,i})^2 w_i. 
	\end{align*} 
	This leads to a contradiction, since the right side of the equation is nonzero.
	
	\underline{Case II:  $I' \cap I_\Theta = \emptyset $}. Then $m- \beta_{p+1} < q_i + w_i$ for $i \in I_{\Theta}$. By \eqref{eq:unif.Milnor} we get:
	\begin{align}\label{eq:unif.Milnor2}
		\sum_{j \in J} c_j \frac{\partial ( F_{\varepsilon_0}^{\,j} )_{\Theta'_j}}{\partial x_i} (\boldsymbol{x}_1)=0, \quad \text{for all } i \in I_{\Theta}.
	\end{align}
	If $(F_{\varepsilon_0})_{\Theta} \equiv 0$, by Condition~(iv), there is $i \in I_\Theta$ such that $w_i \leq w_k,\text{ for all } k,$ thus by Lemma~\ref{lemma: find inner}, 
	$w_i+q'_i=2w_i\leq 2w_k\leq w_k+q'_k \text{ for all } k$, which implies $I' \cap I_\Theta \not= \emptyset$, which is a contradiction. We can suppose that $(F_{\varepsilon_0})_{\Theta} \not\equiv 0$, note that $(F_{\varepsilon_0})_{\Theta} = ((F_{\varepsilon_0})_D)_{\boldsymbol{q}'}.$

	Thus, for $\boldsymbol{x}_2$ defined by setting $x_{2,i}=x_{1,i}$ for $i \in I_\Theta$ and  $x_{2,i}=1$ for $i \not \in I_\Theta$, using \eqref{eq:unif.Milnor2} and Claim \ref{claim:4} we find:  
	$$  \sum_{j \in J} c_j \frac{\partial (F_{\varepsilon_0}^j)_{\Theta_j}}{\partial x_i}(\boldsymbol{x}_2)= \sum_{j \in J} c_j \frac{\partial (F_{\varepsilon_0}^j)_{\Theta'_j}}{\partial x_i}(\boldsymbol{x}_1)=0, \text{ for all } i \in I_{\Theta}.$$ This implies that $\boldsymbol{x}_2 \in \Sigma(((F_{\varepsilon_0})_D)_{\boldsymbol{q}'}) \cap V(((F_{\varepsilon_0})_D)_{\boldsymbol{q}'}) \cap (\K^*)^n$. Hence, $(F_{\varepsilon_0})_D$ is not KND, which contradicts Condition~(i).   
	
	Therefore, the deformation $F$ has $\rho_E$-uniform Milnor radius along $K$. 

    If the inequalities in (iii) are strict, then \(F_\varepsilon\) is (S)IKND with respect to the \(C\)-face diagrams \(D_1,\dots,D_p\) and $(F_\varepsilon)_D $ is KND for all \(\varepsilon \in \mathbb{K}\). In this case, we may choose any compact connected set \(K \subset \mathbb{K}\) containing the origin and carry out the same proof.

	The topological triviality follows directly by applying Proposition~\ref{Prop:toptrivialwithrho}.
\end{proof}
\begin{corollary}\label{cor:convenient}
	Let \( F: \K^n \times \K \to \K^p \) be a deformation of a real polynomial map (or mixed polynomial map). If the following conditions hold:
	\begin{enumerate}
		\item[(i)] \(f\) is convenient and KND,
		\item[(ii)] for each $j=1,\dots,p,$ and \(\varepsilon\),   \(d(\w;F^{\, j}_\varepsilon-f^j) \geq d(\w;f^j), \text{ for all } \w \in \mathcal{P}(f^j).\)
	\end{enumerate}
      Then, the deformation is link-constant along some neighborhood $K \subset \K$ of the origin. If the inequalities in (ii) are strict, then the deformation is link-constant. Furthermore, if, in addition, $f$ is SIKND with respect to $\{\Gamma(f^j)\}_{j=1,\dots,p}$, then the deformation is topologically trivial along the set $K$ or $\K$.
\end{corollary}
\begin{proof}
	The results follow directly from Theorem~\ref{teo: knd co -> urm} and Proposition~\ref{prop: knd co -> iknd}.
\end{proof}
\begin{proposition}\label{prop:defkndsurfaces}
	Let $f: \K^3 \to \K$ be a KND real polynomial function (or mixed polynomial function) as Proposition~\ref{prop:kndsurfaces}. Then, there exists $\boldsymbol{v}\in \mathbb{Q}$ such that any deformation $F$ of $f$ is link-constant, along some neighborhood $K \subset \K$ of the origin, whenever, \(d(\boldsymbol{w}; F_\varepsilon-f) \geq d(\boldsymbol{w}; D_{\boldsymbol{v}}(f)), \text{ for all }  \boldsymbol{w} \in \mathcal{P}(D_{\boldsymbol{v}}(f)).\)
\end{proposition}
\begin{proof}
	By Theorem~\ref{teo: knd co -> urm}, it is sufficient to demonstrate the existence of $\boldsymbol{v} \in \mathbb{Q}^3$ such that the $C$-face diagram $D_{\boldsymbol{v}}(f)$ and the deformation $F$ satisfy Conditions~(i-iv) of Theorem~\ref{teo: knd co -> urm}.
	The verification proceeds as follows: Conditions~(i) and (iii) are satisfied directly by the initial hypotheses that define $f$ and $F_\varepsilon-f$, respectively. Condition~(ii) is guaranteed by choosing $\boldsymbol{v}$ as Lemma~\ref{lemma:diagramnonconvcase} and applying Proposition~\ref{prop:kndsurfaces}. To prove Condition~(iv), note that for $I\neq\{i\}, \ i\in I_{\rm{nc}}(f),$ and an inner face $\Delta$ with $\Delta \cap \R^I \neq \emptyset$, we always have \((f_{|{\K^I}})_\Delta \not \equiv 0\). Thus, consider $I=\{i\}, \ i \in I_{\rm{nc}}(f)$. By Lemma~\ref{lemma:diagramnonconvcase}, there is a unique inner face $\Delta(i)=\Delta(\w;D_{\boldsymbol{v}}(f)),\ \w \in \mathbb{Q}^3$, which intersects $\R^{I}$ and   \((f_{|{\K^I}})_{\Delta(i)} \not \equiv 0\). Without loss of generality, we can suppose $d(\boldsymbol{w};D_{\boldsymbol{v}}(f))=1$. To achieve (iv) for this face, i.e., $w_i \leq w_j, \ j\neq i$, we choose $v_i$ sufficiently large. Since $1=v_iw_i$, increasing $v_i$ forces $w_i$ to decrease, thus establishing  $w_i\leq w_j, \ j\neq i$. This selection of $v_i$ yields Condition~(iv) while preserving the validity of Condition~(ii). This construction ensures all four conditions of Theorem~\ref{teo: knd co -> urm} hold, completing the proof.
\end{proof}

\begin{ex}
	Consider the real polynomial   $f(x_1,x_2,x_3)= (x_{1}^{\,6} + x_1 x_{2}^{\,5} + x_2 x_{3}^{\,6} )( x_{1}^{\,2} + x_{2}^{\,2} + x_{3}^{\,2} ).$
	The Newton boundary of $f$ is given by the union of the 2-faces ABDE and BCD (see Fig.\ref{fig:ncexc}) and $I_{\mathrm{nc}}(f)=\{2,3\}$. 
	\begin{itemize}
		\item[(i)] Let $\boldsymbol{v}=(0,v_2,v_3)$ be a vector. By applying Proposition~\ref{prop:kndsurfaces}, we establish that if $\boldsymbol{v}$ satisfies the conditions $v_2>8$ and $v_3>46/5$, the polynomial $f$ is IKND for the C-face diagram $D_{\boldsymbol{v}}(f)$ (see Fig.\ref{fig:ncexc}). Consider $\boldsymbol{w}_2$ and $\boldsymbol{w}_3$ the weight vectors that satisfy $\Delta(2)=\Delta(\boldsymbol{w}_2;D_{\boldsymbol{v}}(f))=CEG$ and $\Delta(3)=\Delta(\boldsymbol{w}_3;D_{\boldsymbol{v}}(f))=BCF$ as Lemma~\ref{lemma:diagramnonconvcase}. If  $8<v_2<9$ and  $v_3>46/5$, then the weight vector $\boldsymbol{w}_2$ is given by $\w_2=(v_2-7,1,\frac{v_2-1}{8})$. Consequently, the C-face diagram $D_{\boldsymbol{v}}(f)$ does not satisfy (iv) in Theorem~\ref{teo: knd co -> urm}. If $v_2\geq 9$ and  $v_3>46/5$, then the C-face diagram $D_{\boldsymbol{v}}(f)$ satisfies  (iv) in Theorem~\ref{teo: knd co -> urm}. This implies that the vector $\boldsymbol{v}$ fits the vector in Proposition~\ref{prop:defkndsurfaces}. Therefore, if the vector $\boldsymbol{v}$ satisfies the conditions  $v_2\geq 9$ and  $v_3>46/5$, then any deformation $F$ of $f$, with $F_\varepsilon-f$ above the $C$-face diagram $D_{\boldsymbol{v}}(f)$ is link-constant.   
		\item[(ii)] 
		Consider the $C$-face diagram $D$ shown in Fig.\ref{fig:ncexb}. This $C$-face diagram is defined by $\ell_1(\boldsymbol{\nu}) = \left\langle \left(\tfrac{1}{8}, \tfrac{1}{8}, \tfrac{1}{8} \right), \boldsymbol{\nu} \right\rangle$  and $\ell_2(\boldsymbol{\nu}) = \left\langle \left(\tfrac{6}{46}, \tfrac{6}{46}, \tfrac{5}{46} \right), \boldsymbol{\nu} \right\rangle$.
		
		This $C$-face diagram satisfies (iv) in Theorem~\ref{teo: knd co -> urm} and the polynomial $f$ is IKND with respect to it. Consequently, by the direct application of Theorem~\ref{teo: knd co -> urm}, any deformation $F$ of $f$, with $F_\varepsilon-f$ above the $C$-face diagram $D$ is link-constant. The existence of the $C$-face diagram $D$ demonstrates the potential to find more refined diagrams, such as the family of diagrams $D_{\boldsymbol{v}}$, by exploring alternative constructions.  
	\end{itemize}
    \begin{figure}[h]
		\begin{subfigure}[a]{.4\textwidth}
			\centering
			\begin{tikzpicture}[scale=0.4,>=stealth,tdplot_main_coords]
				\coordinate (O) at (0,0,0);
				\coordinate[label=above right:{C}] (C) at (0,1,8);
				\coordinate[label=above left:{B}] (B) at (6,0,2);
				\coordinate[label=above left:{D}] (D) at (1,5,2);
				\coordinate[label=above left:{A}] (A) at (8,0,0);
				\coordinate[label=below left:{E}] (E) at (1,7,0);
				\coordinate[label=above right:{F}] (F) at (0,0,9.3);
				\coordinate[label=above right:{G}] (G) at (0,8.5,0); 
				\draw[->] (O) -- (11,0,0);
				\draw[->] (O) -- (0,11,0);
				\draw[->] (O) -- (0,0,11);
				\filldraw[draw=blue,fill=blue,fill opacity=0.1] (G) -- (E) -- (0,1,8) -- cycle;
				\filldraw[draw=blue,fill=blue,fill opacity=0.1] (C) -- (D) -- (E) -- cycle;
				\filldraw[draw=blue,fill=blue,fill opacity=0.1] (B) -- (C) -- (F) -- cycle;
				\filldraw[draw=red,fill=red,fill opacity=0.1] (C) --(B)--(D)-- cycle;
				\filldraw[draw=purple,fill=purple,fill opacity=0.3] (B) -- (A)  -- (E)  --(1,5,2) -- cycle;
				\fill[black] (A) circle (5pt);
				\fill[black] (B) circle (5pt);
				\fill[black] (C) circle (5pt);
				\fill[black] (D) circle (5pt);
				\fill[black] (E) circle (5pt);
				\fill[black] (F) circle (5pt);
				\fill[black] (G) circle (5pt);				
			\end{tikzpicture}
			\caption{}
			\label{fig:ncexc}
		\end{subfigure}  
		\hspace{1.8cm}
		\begin{subfigure}[a]{.4\textwidth}
			\centering
			\begin{tikzpicture}[scale=0.4,>=stealth,tdplot_main_coords]
				\coordinate (O) at (0,0,0);
				\coordinate[label=above right:{C}] (C) at (0,1,8);
				\coordinate[label=above left:{B}] (B) at (6,0,2);
				\coordinate[label=above left:{D}] (D) at (1,5,2);
				\coordinate[label=above left:{A}] (A) at (8,0,0);
				\coordinate[label=below left:{E}] (E) at (1,7,0);
				\coordinate[label=above right:{I}] (I) at (0,0,9.2);
				\coordinate[label=above right:{J}] (J) at (0,8,0); \coordinate[label=above right:{H}] (H) at (0,6,2);
				\draw[->] (O) -- (11,0,0);
				\draw[->] (O) -- (0,11,0);
				\draw[->] (O) -- (0,0,11);
				\filldraw[draw=red,fill=red,fill opacity=0.1] (B) --(I) -- (H) -- cycle;
				\filldraw[draw=purple,fill=purple,fill opacity=0.3] (B) -- (A)  -- (J) --(H) -- cycle;
				\fill[black] (A) circle (5pt);
				\fill[black] (B) circle (5pt);
				\fill[black] (C) circle (5pt);
				\fill[black] (D) circle (5pt);
				\fill[black] (E) circle (5pt);
				\fill[black] (I) circle (5pt);
				\fill[black] (J) circle (5pt);
				\fill[black] (H) circle (5pt);
			\end{tikzpicture}
			\caption{}
			\label{fig:ncexb}
		\end{subfigure}
		\caption{$A=(8,0,0)$, $B=(6,0,2)$, $C=(0,1,8)$, $D=(1,5,2)$, $E=(1,7,0)$, $F=(0,0,v_3)$, $G=(0,v_2,0)$, $H=(0,6,2)$, $I=(0,0,46/5)$, $J=(0,8,0)$.}
		\label{fig:ncex}
	\end{figure}
\end{ex}
\subsection{SWH and SRWH maps}
\begin{theorem}\label{Th:Main2}
	Let \( F: \K^n \times \K \to \K^p \)  be a deformation of a real polynomial map (resp. mixed polynomial map) \( f \). Assume the following conditions hold:
	\begin{itemize}
		\item[(i)] \(f\) is SWH (resp. SRWH) of weight-type \((\boldsymbol{w}; \boldsymbol{d})\),
		\item[(ii)] for each  $j=1,\dots,p,$ and $\varepsilon$, \(d({\boldsymbol{w}}; F_{\varepsilon}^{\,j} -f^j) \geq d_j.\)
	\end{itemize}
	Then, the deformation is link-constant along some neighborhood $K \subset \K$ of the origin. If the inequalities in (ii) are strict,  then the deformation is link-constant. Furthermore, if $\Sigma(f_{\boldsymbol{w}})=\{0\}$, then the deformation is also topologically trivial along the set $K$ or $\K$. 
\end{theorem}
\begin{proof}
	Let $(\boldsymbol{w};\boldsymbol{d})=(w_1,\dots,w_n;d_1,\dots,d_p)$ denote its weight-type. Let $D_j$ be the $C$-face diagrams defined from $J_j=\{\ell_{\w}(\nu)/{d_j}\}.$
	
	Without loss of generality, assume that $f$ is a real polynomial map, the result for mixed polynomial maps follows by regarding it as real polynomial map by identifying $\C$ with $\R^2$.
	
	It is enough to suppose that $f$ is WH, that is, $f=f_{\w}$, or equivalently, $\tilde{f}\equiv 0$.   
	
	Let $\theta^{\, j}_\varepsilon := F_{\varepsilon}^{\,j} - f^j$, $j = 1, \dots,p$ and $I=\{j\in [p] \mid d(\boldsymbol{w};\theta^{\, j}_\varepsilon)= d_j\}$. We divide the proof into two cases: 
	$I=\emptyset$ and $I\neq \emptyset$.
	\vspace{0.2cm}

	\noindent \underline{Case I: $I=\emptyset$.}
	\noindent Consider the analytic function
	$\rho:\R^n \to \R$ defined as:
	\[\rho(\boldsymbol{x})=\sum_{i=1}^n |x_i|^{\frac{2w}{w_i}}, \ w=w_1 w_2 \cdots w_p.\]
	
	Consider the map \(\pi: \rho^{-1}(1) \times \R \to \R^n\) defined as: 
	\begin{equation}\label{eqsem4}
		\pi(\boldsymbol{s}, r)=(r^{w_1}s_1,\dots,r^{w_n}s_n).
	\end{equation}
	This map is a diffeomorphism outside the origin. This map is also used in \cite[Theorem~IV]{fukui1998}.
	
	\noindent Since $f$ is WH of weight-type $(\boldsymbol{w};\boldsymbol{d})$, we have for any $j$:
	\begin{align}\label{eqsem5}
		F_{\varepsilon}^{\,j} (\pi(\boldsymbol{s},r))&=f^j(\boldsymbol{s})r^{d_j} +\theta^{\, j}_\varepsilon(\pi(\boldsymbol{s},r)),
	\end{align}
	where, by $d(\w;\theta^{\, j}_{\varepsilon}) > d_j,\ j=1,\dots,p$,  
	\(\lim _{r \to 0} r^{-d_j} \theta^{\, j}_\varepsilon (\pi(\boldsymbol{s},r))=0.\)
	Thus, $F_{\varepsilon}(\pi(\boldsymbol{s},r))=0$ 
	if and only if: 
	\[r^{d_j}(f^j(\boldsymbol{s})+ r^{-d_j}\theta^{\, j}_\varepsilon (\pi(\boldsymbol{s},r)))=0, \ \text{for all } j =1,\dots p, \]
	which implies $r=0$ (corresponding to the origin) or: 
	\begin{equation}\label{Sys:1thmain2}
		f^j(\boldsymbol{s})+ r^{-d_j}\theta^{\, j}_\varepsilon (\pi(\boldsymbol{s},r))=0, \ \text{for all } j=1,\dots,p.
	\end{equation}
	The system provided by \eqref{Sys:1thmain2} can be seen as the zero set of the map
	\[G(\boldsymbol{s},r,\varepsilon):\rho^{-1}(1) \times \R_{\geq 0}\times \R_{\geq 0} \to \R^p\] which is a deformation with respect to the parameter $r$ of the map $G_0(\boldsymbol{s},\varepsilon):\rho^{-1}(1) \times \R_{\geq 0} \to \R^p$, where $$G_0(\boldsymbol{s},\varepsilon):=G(\boldsymbol{s},0,\varepsilon)=f_{|\rho^{-1}(1)}(\boldsymbol{s}).$$
	It follows that:
	\[\mathrm{rank}(J_G(\boldsymbol{s},0,\varepsilon)=\mathrm{rank} (J_{f_{|\rho^{-1}(1)}}(\boldsymbol{s})).\]
	Since $\Sigma(f) \cap V(f) =\{0\}$ and $f$ is WH, for any point $(\boldsymbol{s}_0,\varepsilon_0)\in \rho^{-1}(1) \times \R_{\geq 0}$ satisfying $G(\boldsymbol{s}_0,0,\varepsilon_0)=f(\boldsymbol{s}_0)=0$, we find
	\(\mathrm{rank}(J_G(\boldsymbol{s}_0,0,\varepsilon_0))=p.\)
    
	If the link of $f$ is empty, then the link of each member of the deformation is empty as well, which implies the result. If $f$ has non-empty link, by the implicit function theorem, we can locally, at a neighborhood of $(\boldsymbol{s}_0,0,\varepsilon_0)$, parametrize $\pi^{-1}(V(F))$  (after possible changes of variables) as:  
	\begin{equation}\label{eqsem1}
		(s_1(s_{p+2},\dots,s_n,r,\varepsilon),\dots,s_{p+1}(s_{p+2},\dots,s_n,r,\varepsilon),s_{p+2}, \dots s_n,r,\varepsilon),
	\end{equation}
	with $r$ and $\varepsilon$ moving in $[0,\delta_{s_0,\varepsilon_0})$ and $(\varepsilon_0-\tau_{\boldsymbol{s}_0,\varepsilon_0},\varepsilon_0+\tau_{\boldsymbol{s}_0,\varepsilon_0})$, respectively, and \[  s_i(s_{0,p+2}, \dots, s_{0,n},0,\varepsilon_0) =s_{0,i}, \ i=1,\dots,p+1. \]
	
	By compactness of $\rho^{-1}(1)\times\{\varepsilon_0\}$, there exists a finite family $\mathcal{F}$ of parametrizations of the form \eqref{eqsem1}, covering the set 
	\begin{equation}\label{eqsem2}
		(\rho^{-1}(1) \times [0,\delta_{\mathrm{min}}) \times (\varepsilon_0-\tau_{\mathrm{min}},\varepsilon_0+\tau_{\mathrm{min}}))  \cap G^{-1}(0),
	\end{equation}
	where $\delta_{\mathrm{min}}>0$ and $\tau_{\mathrm{min}}>0$ are choosing as the minimum values of $\rho$'s and $\tau$'s associated with the parametrizations in $\mathcal{F}$, respectively. 
	
	By the uniqueness of the parametrizations coming from the implicit function theorem, we can glue the parametrizations of the family $\mathcal{F}$ to get a parametrization of the entire set \eqref{eqsem2}. 
	
	Therefore, for any fixed \( 0 < r < \rho_{\mathrm{min}} \), this parametrization provides an isotopy, obtained by varying the parameter \( \varepsilon \), contained in \( \rho^{-1}(1) \times {r} \), between the following links
	$$(\rho^{-1}(1) \times \{r\}) \cap \pi^{-1}(V(F_{\varepsilon_0})) \text{ and } (\rho^{-1}(1) \times \{r\}) \cap \pi^{-1}(V(F_{\tau})),$$ with $\ \tau \in (\varepsilon_0-\tau_{\mathrm{min}},\varepsilon_0+\tau_{\mathrm{min}}).$ Then, we have a smooth isotopy between 
	\begin{equation}\label{eq:isotopy}
		\pi(\rho^{-1}(1) \times \{r\}) \cap V(F_{\varepsilon_0}) \text{ and } \pi(\rho^{-1}(1) \times \{r\}) \cap V(F_{\tau}),\end{equation}
	by the isotopy extension theorem \cite{Hirsch1976}, it extends to an ambient isotopy of $\pi(\rho^{-1}(1) \times \{r\})$. Thus, as in the proof of Proposition~\ref{Prop:GD}, we can construct an isotopy between $L_{F_0}$ and $L_{F_\tau}$. It is clear that finitely many such isotopies suffice to produce an isotopy between \( L_{F_0} \) and \( L_{F_1} \).
	
	\noindent \underline{Case II: $I\neq \emptyset$.} \noindent Consider the map $\pi$ defined in \eqref{eqsem4}. Differently of \eqref{eqsem5}, 
	for $ F_{\varepsilon}^{\,j} $, we have now:
	\begin{align*}
		F_{\varepsilon}^{\,j} (\pi(\boldsymbol{s},r))&=(f^j(\boldsymbol{s})+ (\theta^{\, j}_{\varepsilon})_{D_j}(\boldsymbol{s}))r^{d_j} +h^j(\pi(\boldsymbol{s},r),\varepsilon),
	\end{align*}
	where $(\theta^{\, j}_{\varepsilon})_{D_j}$ (possibly null not for all $i$) denotes the restriction of $\theta^{\, j}_\varepsilon$ to the $C$-face diagram $D_j$, which in this case this restriction is a WH polynomial of  weight-type  \((\boldsymbol{w};d_j)\), and $h$ satisfies:
	\(\lim _{r \to 0} r^{-d_j}h^j(\pi(\boldsymbol{s},r),\varepsilon)=0.\)
	Thus, $F_\varepsilon(\pi(\boldsymbol{s},r))=0$ 
	if and only if: 
	\[r^{d_j}( F_{0}^{\,j} (\boldsymbol{s})+(\theta^{\, j}_{\varepsilon})_{D_j}(\boldsymbol{s})+ r^{-d_j}h^j(\pi(\boldsymbol{s},r),\varepsilon))=0, \ j=1, \dots , p \]
	which implies $r=0$ (corresponding to the origin) or: 
	\begin{equation}\label{Sys:2thmain2}
		f^j(\boldsymbol{s})+(\theta^{\, j}_{\varepsilon})_{D_j}(\boldsymbol{s})+ r^{-d_j}h^j(\pi(\boldsymbol{s},r),\varepsilon)=0, \ j=1, \dots , p.
	\end{equation}
	We describe \eqref{Sys:2thmain2} as the zero set of  the map
	\[H(\boldsymbol{s},r,\varepsilon):\rho^{-1}(1) \times \R_{\geq 0}\times \R_{\geq 0} \to \R^p ,\] 
	 which is a deformation with respect to the parameter $ \varepsilon$ (differently to the system~\eqref{Sys:1thmain2} that we see as a deformation in the parameter $r$) of the map $H(\cdot,0):\rho^{-1}(1) \times \R_{\geq 0} \to \R^p$, with $H(\boldsymbol{s},r,0)=f_{|\rho^{-1}(1)}(\boldsymbol{s}).$
	 
	It follows that:
	\[\mathrm{rank}(J_H(\boldsymbol{s},r,0))=\mathrm{rank} (J_{f_{|\rho^{-1}(1)}}(\boldsymbol{s})).\]
	Since $\Sigma(f)\cap V(f)=\{0\}$ and $f$ is WH, for any point  $(\boldsymbol{s}_0,0)$ satisfying $H(\boldsymbol{s}_0,0,0)=0$, we find \(\mathrm{rank}(J_H(\boldsymbol{s}_0,0,0))=p.\)
	Analogously to the case before we find $\rho_{\mathrm{min}}$ and $\tau_{\mathrm{min}}$, such that for any $r \in (0, \rho_{\mathrm{min}})$ fixed, there is an isotopy contained in $\rho^{-1}(1)\times \{r\}$, between the links 
	$$(\rho^{-1}(1) \times \{r\}) \cap \pi^{-1}(V(F_{0})) \text{ and } (\rho^{-1}(1) \times \{r\}) \cap \pi^{-1}(V(F_{\tau})),\ \tau \in [0,\tau_{\mathrm{min}}),$$
	and by compose with $\pi$, we get an isotopy between 
	\begin{equation}\label{eq:rholinkson}\pi(\rho^{-1}(1)\times \{r\}) \cap V(F_{0}) \text{ and } \pi(\rho^{-1}(1)\times \{r\}) \cap V(F_{\tau}),\ \tau \in [0,\tau_{\mathrm{min}}).
	\end{equation}
	As in Case I, we obtain an ambient isotopy between the links $L_{F_0}$ and $L_{F_\tau}$.
	
	Now, if the inequalities in (ii) are strict and $\Sigma(f_{\boldsymbol{w}})=\{0\}$, then from Case I we can conclude that the deformation $F$ has a $\rho$-uniform Milnor radius. Hence, the topological triviality follows by Propositions~\ref{prop:SWH->IKND} and \ref{Prop:toptrivialwithrho}.
\end{proof}
\begin{remark}
	The topologically trivial deformations studied in Theorem~\ref{Th:Main2} are specific examples of the deformations detailed in \cite[Theorem~2.14]{zBekka2015}. We confirm their existence using the polynomial maps constructed by Looijenga in \cite{Looijenga1971} and Bode \cite{Bode2019}.
\end{remark}
\begin{ex}\label{ex:semiradial}
	\begin{itemize}
		\item[(i)] Consider \(
		f(x_1,x_2,x_3)= x_{1}^{\,12} +x_1 x_{2}^{\,4} x_3 +( x_{2}^{\,3} -x_{3}^{\,2} )^2
		\) the real polynomial function of Example~\ref{ex1:SIKND} which is SIKND. It is weighted homogeneous of weight-type $(1,2,3;12)$. Thus, by Theorem~\ref{Th:Main2}, for any deformation $F$ satisfying $d((1,2,3); F_\varepsilon-f)\geq 12$, $F$ is link-constant along some neighborhood $K \subset \R$ of the origin. If $d((1,2,3); F_\varepsilon-f)\geq 12$, then the deformation is topologically trivial and link-constant.      
		\item[(ii)] Consider 
		$f(\x)= x_{1}^{\,6} +\overbar{x_{2}} x_{1}^{\,4} + x_1 \overbar{x_{2}}^3 + x_{2}^{\,6} .$ A direct computation shows that \( f \) is SRWH of weight-type $(\w=(2,3);11)$. Thus, by Theorem~\ref{Th:Main2}, we have that the link of $f$ is ambient isotopic to the link of $f_{\w}(\x)=\overbar{x_{2}} x_{1}^{\,4} + x_1 \overbar{x_{2}}^3$.  
	\end{itemize}
\end{ex}
\subsection{Inner non-degenerate mixed polynomial functions of two variables}\label{section5}
Inspired also by \cite{wall}, the concept of inner non-degeneracy for mixed polynomial functions of two variables was introduced in \cite{AraujoBodeSanchez}. Although that definition relies on weight vectors associated with the 1-faces of the Newton boundary and non-extreme vertices, we show in this section that it coincides with our own Definition~\ref{def: ind}. Thus, we refer to a mixed polynomial function that satisfies either (and hence both) of these non-degeneracy conditions as an inner non-degenerate mixed polynomial function. In this section, we show that the inner Newton non-degeneracy defined in \cite{AraujoBodeSanchez} is equivalent to our definition of inner Khovanskii non-degeneracy. Furthermore, we prove that the link of an inner non-degenerate mixed polynomial function depends only on certain monomials of the radial Newton boundary.

Consider the set of weight vectors
\begin{equation*}
	\mathcal{P}(f)=\{\w_1,\dots,\w_N\}.
\end{equation*}  
For each \(\w_i\) in $\mathcal{P}(f)$, define \(k_i:=\frac{w_{i,1}}{w_{i,2}}\), the slope of the line passing through $\Delta(\w_i;f)$. We order $\mathcal{P}(f)$ so that $i<j$ if and only if $k_i >k_j$.

The vertices of $\Gamma(f)$ are classified as either \textit{non-extreme} or \textit{extreme}. A vertex $\Delta$ is called \textit{non-extreme} if there exits $\w_j\in \mathcal{P}(f)$ such that $\Delta=\Delta(\w_j;f)\cap \Delta(\w_{j+1};f)$, for consecutive weight vectors $\w_j, \w_{j+1} \in \mathcal{P}(f)$. Otherwise, $\Delta$ is called \textit{extreme}.  
\begin{definition}[{\cite{AraujoBodeSanchez}}]\label{Newtoncond}
	Let \( f:\C^2 \to \C \) be a mixed polynomial function. We say that \( f \) is:
	\begin{itemize}
		\item \textbf{Inner Newton non-degenerate (INND)} if either:
		\begin{enumerate}[label=\normalfont(\arabic*)]
			\item \( \mathcal{P}(f) \neq \emptyset \), and:
			\begin{itemize}
				\item[(i)] \( \Sigma(f_{\w_1}) \cap V(f_{\w_1})  \cap (\C^2 \setminus \{x_2=0\}) = \emptyset \), and \( \Sigma(f_{\w_N}) \cap V(f_{\w_N}) \cap (\C^2 \setminus \{x_1=0\}) = \emptyset \);
				\item[(ii)] for every 1-face or non-extreme vertex \( \Delta \), \( \Sigma(f_\Delta) \cap V(f_\Delta) \cap (\C^*)^2 = \emptyset \);
			\end{itemize}
			\item \( \mathcal{P}(f) = \emptyset \), and \( \Sigma(f_\Delta) \cap V(f_\Delta) \cap (\C^2\setminus \{0\}) = \emptyset \), where \( \Delta \) is the unique face of \( \Gamma(f)=\{\Delta\} \), a vertex.
		\end{enumerate}
		\item \textbf{Strongly inner Newton non-degenerate (SINND)} if either:
		\begin{enumerate}[label=\normalfont(\arabic*)]
			\item \( \mathcal{P}(f) \neq \emptyset \), and:
			\begin{itemize}
				\item[(i')] \( \Sigma(f_{\w_1}) \cap (\C^2 \setminus \{x_2=0\}) = \emptyset \), and \( \Sigma(f_{\w_N}) \cap (\C^2 \setminus \{x_1=0\}) = \emptyset \);
				\item[(ii')] for every 1-face or non-extreme vertex \( \Delta \), \( \Sigma(f_\Delta) \cap (\C^*)^2 = \emptyset \);
			\end{itemize}
			\item \( \mathcal{P}(f) = \emptyset \), and \( \Sigma(f_\Delta) \cap (\C^2\setminus \{0\}) = \emptyset \).
		\end{enumerate}
	\end{itemize}
\end{definition}
\begin{proposition}\label{prop:caracterizationinner} Let $f:\C^2 \to \C$ be a mixed polynomial function, then $f$ is (S)INND if and only if $f$ is (S)IKND. 
\end{proposition}
\begin{proof} 
	We will prove the case of inner non-degeneracies, the stronger version follows analogously. We divide the proof into two cases: 
	
	\underline{Case I: $\mathcal{P}(f) = \emptyset$}. Assume that $f$ is INND. Then, $\Gamma(f)$ has a unique compact face, namely the vertex $(1,1)$. It follows that $D(J),\ J=\{\ell(\nu)=\frac{1}{2}(\nu_1+\nu_2)\},$ satisfies (i)-(ii) of Definition~\ref{def: ind}.
	
	Conversely, let $D$ be a $C$-face diagram satisfying (i)-(ii) of Definition~\ref{def: ind}. For any inner face $\tilde{\Delta}$ of $D$, we have $f_{\tilde{\Delta}} = f_{\Delta}$, where $\Delta$ is the unique compact face (a vertex) of $\Gamma(f)$. Then, by (ii) of Definition~\ref{def: ind}, $f$ is INND.
	
	\underline{Case II: $\mathcal{P}(f) \neq \emptyset$}. Assume that $f$ is INND. Let \(
	J =\left\{ \frac{\ell_{\w_i}(\nu)}{d(\w_i;f)} \mid \w_i \in \mathcal{P}(f) \right\}.
	\) From (i)-(ii) of Definition~\ref{Newtoncond}, we get that $f$ is IKND with respect to $D(J)$.
    
	Conversely, let \(D\) be a $C$-face diagram satisfying (i)-(ii) of Definition~\ref{def: ind}. Denote by \\ \(\tilde{\Delta}_1, \tilde{\Delta}_2, \dots, \tilde{\Delta}_{\tilde{N}}\) the \(1\)-faces of \(D\), indexed by decreasing slope so that \(\tilde{\Delta}_1\) has the largest slope.  Denote $\Delta_i=\Delta(\w_i;f), \ i=1,\dots, N,$  where  $\{\w_1,\dots,\w_N\}=\mathcal{P}(f)$.
	
	We have that 
	\[\{\Delta_2,\dots, \Delta_{N-1}\} \subset \{\tilde{\Delta}_1,\dots, \tilde{\Delta}_N\}.\]
	Indeed, otherwise either (i) of Definition~\ref{def: ind} fails, or $\tilde{\Delta}_j \cap \tilde{\Delta}_{j+1} \cap \supp (f) = \emptyset$ for some $j = 1, \dots, \tilde{N}-1$, which would violate (ii) of Definition~\ref{def: ind} for the inner face $\tilde{\Delta}_j \cap \tilde{\Delta}_{j+1}$. Thus, $\Delta_i, \ i=2,\dots, N-1$ and $\Delta_i \cap \Delta_{i+1}, \ i=2,\dots, N-2,$  are inner faces of \(D\), and hence $f$ satisfies (ii) in Definition~\ref{Newtoncond} for these faces. It remains to verify the following: 
	\begin{equation}\label{eq:newton10}
		\Sigma(f_{\Delta_1 \cap \Delta_2  }) \cap V(f_{\Delta_1 \cap \Delta_2  }) \cap (\C^*)^2 = \emptyset, \text{ and } \Sigma({f_{\w_1}}) \cap V({f_{\w_1}}) \cap (\C^2 \setminus \{x_2 = 0\}) = \emptyset.
	\end{equation}
	\begin{equation} \label{eq:newton11}
		\Sigma(f_{\Delta_{N-1} \cap \Delta_N}) \cap V(f_{\Delta_{N-1} \cap \Delta_N})\cap (\C^*)^2 = \emptyset, \text { and } \Sigma({f_{\w_N}}) \cap V({f_{\w_N}}) \cap (\C^2 \setminus \{x_1 = 0\}) = \emptyset.
	\end{equation}
	We will prove \eqref{eq:newton10}. If \(\Delta_1 \subseteq \tilde{\Delta}_1\), then $\Delta_1 \cap \Delta_2$ is an inner face of $D$ and $f_{\Delta_1}=f_{\tilde{\Delta}_1}$; hence \eqref{eq:newton10} follows by applying (ii) of Definition~\ref{def: ind}. 
	If \(\Delta_1 \not\subset \tilde{\Delta}_1\), then \(\tilde{\Delta}_1\cap \supp (f)\) does not contain a point of the form \((0,b)\), but it does contain a point of the form \((1,b)\); otherwise,
	\[
	\Sigma({f_{\tilde{\Delta}_1}}) \cap V({f_{\tilde{\Delta}_1}}) \cap (\C^*)^{\{2\}} \neq \emptyset,
	\]
	which contradicts (ii) of Definition~\ref{def: ind}. Hence, \(\Delta_0 := (1,b)\) is an extreme vertex of \(\Gamma(f)\) such that \(f_{\Delta_0}=f_{\tilde{\Delta}_1}\) and 
	\begin{equation} \label{eq:INDandD2}
		\Sigma({f_{\Delta_0}}) \cap V({f_{\Delta_0}}) \cap (\C^*)^{\{2\}} = \emptyset.
	\end{equation}
	There are two possible cases:
	\[ 
	f_{\w_1}(\x) = f_{\Delta_0}(\x) +  M(x_2,\overbar{x_{2}}) \text{ or } f_{\w_1}(\x) = f_{\Delta_0}(\x) + \sum_{\nu_1+\mu_1\geq 2} c_{\boldsymbol{\nu},\boldsymbol{\mu}}\x^{\boldsymbol{\nu}}\overbar{\x}^{\boldsymbol{\mu}},\ c_{\boldsymbol{\nu},\boldsymbol{\mu}}\in \C^*.
	\]
	In the first case, $\Delta_1 \cap \Delta_2=\Delta_0$, and \eqref{eq:INDandD2} implies \eqref{eq:newton10}. In the second case,  we have \(
	f_{\w_1}(\x) = f_{\tilde{\Delta}_2}(\x)\)
	and $\Delta_1 \cap \Delta_2=\tilde{\Delta}_2 \cap \tilde{\Delta}_3$. Applying (ii) of Definition \ref{def: ind} we conclude
	\begin{equation} \label{eq:INDandD3}
		\Sigma({f_{\Delta_1 \cap \Delta_2}}) \cap V({f_{\Delta_1 \cap \Delta_2}}) \cap (\C^*)^{\{1,2\}}=\Sigma({f_{\w_1}}) \cap V({f_{\w_1}}) \cap (\C^*)^{\{1,2\}} = \emptyset.
	\end{equation}  
	From \eqref{eq:INDandD2}, it follows that 
	\begin{equation} \label{eq:INDandD4}
		\Sigma({f_{\w_1}}) \cap V({f_{\w_1}}) \cap (\C^*)^{\{2\}} = \emptyset.
	\end{equation}
	Combining \eqref{eq:INDandD3} and \eqref{eq:INDandD4} yields \eqref{eq:newton10}. 
	
	Similarly, \eqref{eq:newton11} holds by applying the same argument with $\Delta_N$ instead of $\Delta_1$. Therefore, \(f\) is INND.  
\end{proof} 
Now, we present results in link-constancy of deformations of IKND mixed polynomial functions. For this purpose, we are not concerned with establishing our result for any diagram that satisfies certain properties, as in Theorem~\ref{teo: knd co -> urm}; instead, we aim to define the optimal diagram associated with \(f\) that admits the wider class of link-constant deformations for it.  
\begin{definition}\label{def:Gammainn}
Let $f:\C^2\to \C$ be an IKND mixed polynomial function. Define $\Gamma_{\mathrm{inn}}(f)$ as a $C$-face diagram $D$ satisfying: 
\begin{itemize}
    \item[(i)] $f$ is IKND with respect to $D$,
    \item[(ii)] for $i \in I_{\rm{nc}}(f_D)$ and any inner face $\Delta(\boldsymbol w;D), \ \boldsymbol w \in (\mathbb{Q}_{>0})^2$, of $D$ satisfying  \(\Delta(\boldsymbol w;D) \cap \R^{\{i\}}\neq \emptyset,\) we get 
		\(    w_{i} \leq w_{j},\text{ for all } j,\)
        \item[(iii)] if $D'$ is another $C$-face diagram satisfying (i) and (ii) then  $$D'+(\R_{\geq 0})^{2} \subseteq D+(\R_{\geq 0})^{2}.$$
\end{itemize}
Define the set of positive weight vectors
$	\mathcal{P}_{\rm inn}(f)=\{\boldsymbol{w}_1,\dots,\boldsymbol{w}_N\}$ associated with the 1-faces of $\Gamma_{\rm inn}(f)$ so that $i<j$ if and only if  $k_i >k_j$, where \(k_i:=\frac{w_{i,1}}{w_{i,2}}\). 
\end{definition}

	\begin{figure}[h]
		\begin{subfigure}[a]{.4\textwidth}
			
			\begin{tikzpicture}[scale=0.65]
				\begin{axis}[axis lines=middle,axis equal,yticklabels={0,,2,4,6,8}, xticklabels={0,,2,4,6,8},domain=-10:10,     xmin=0, xmax=10,
					ymin=0, ymax=10,
					samples=1000,
					axis y line=center,
					axis x line=center]  
					\fill[yellow!90,nearly transparent] (0,60) -- (10,30) -- (40,10) -- (60,0) -- (120,0) -- (120,120) -- (0,120) --cycle;
					
					\filldraw[blue] (0,700) circle (2.3pt)  node[anchor=east,,dashed] {};
					\filldraw[blue] (100,400) circle (2.3pt)  node[anchor=east,,dashed] {};
					\filldraw[blue] (200,200) circle (2.3pt)  node[anchor=east,,dashed] {};
					\filldraw[red] (0,300) circle (2.3pt)  node[anchor=east,,dashed] {};
					\filldraw[blue] (300,0) circle (2.3pt)  node[anchor=east,,dashed] {};
					\filldraw[black] (60,60) node[anchor=north ] {$\Gamma_+(f)$};
					\addplot[mark=none, line width=1.2pt, blue] coordinates{(6,0) (4,1) (1,3) (0,6)}; 
					\addplot[only marks, mark=*, blue, mark size=2pt] coordinates {(6,0)};
					\addplot[only marks, mark=*, blue, mark size=2pt] coordinates {(4,1)};
					\addplot[only marks, mark=*, blue, mark size=2pt] coordinates {(1,3)};
					\addplot[only marks, mark=*, blue, mark size=2pt] coordinates {(0,6)};
				\end{axis}
			\end{tikzpicture}
			\caption{Newton boundary of $f$.}
			\label{fig:1ex1}
		\end{subfigure}
		\hfil \hfil
		\begin{subfigure}[a]{.4\textwidth}
			\begin{tikzpicture}[scale=0.65]
				\begin{axis}[axis lines=middle,axis equal,yticklabels={0,,2,4,6}, xticklabels={0,,2,4,6},domain=-10:10,     xmin=0, xmax=10,
					ymin=0, ymax=10,
					samples=1000,
					axis y line=center,
					axis x line=center]  
					\fill[yellow!90,nearly transparent] (0,40) -- (10,30) -- (110/2,0) -- (140,0) -- (140,120) -- (0,120) --cycle;
					
					\addplot[mark=none, line width=1.2pt, blue] coordinates{(11/2,0) (4,1) (1,3) (0,4)}; 
					\addplot[only marks, mark=*, blue, mark size=2pt] coordinates {(1,3)};
					\addplot[only marks, mark=*, blue, mark size=2pt] coordinates {(0,4)};
					\addplot[only marks, mark=*, blue, mark size=2pt] coordinates {(11/2,0)};
					\filldraw[black] (60,60) node[anchor=north ] { $\Gamma_{\rm inn}(f)+(\R_{\geq 0})^2$};
					\filldraw[blue] (0,500) circle (2.3pt)  node[anchor=east,,dashed] {};
					\filldraw[blue] (100,400) circle (2.3pt)  node[anchor=east,,dashed] {};
					\filldraw[blue] (200,200) circle (2.3pt)  node[anchor=east,,dashed] {};
					\filldraw[blue] (100,300) circle (2.3pt)  node[anchor=east,,dashed] {};
					\filldraw[black] (110,105) node[anchor=north ] {$D$};
				\end{axis}
			\end{tikzpicture}
			\caption{$C$-face diagram $\Gamma_{\rm inn}(f)$.}
			\label{fig:2ex1}\end{subfigure}
		\caption{$C$-face diagrams associated with $f$.}
		\label{fig:Ddiagrams}
	\end{figure}
\begin{ex}\label{ex:nice}
	Consider $f(\x)=x_{1}^{\,6} + \overbar{x_{2}} x_{1}^{\,4} + x_1 \overbar{x_{2}}^3 + x_{2}^{\,6}$.
	Let $D(J_1)$ and $D(J_2)$ be the $C$-face diagrams defined by
     \begin{equation*}\label{eq:J1}
		J_1=\{
		\ell_{1,1}(\boldsymbol{\nu}) = \left\langle \left(\tfrac{2}{11},\, \tfrac{3}{11} \right), \boldsymbol{\nu} \right\rangle \} \text{ and }
		J_2=\{
		\ell_{1,1}(\boldsymbol{\nu}), \ell_{1,2}(\boldsymbol{\nu}) = \left\langle \left(\tfrac{1}{4}, \tfrac{1}{4} \right), \boldsymbol{\nu} \right\rangle \}.
	\end{equation*}
	A direct computation confirms that the function $f$ is IKND with respect to the $C$-face diagram $D(J_1)$, $D(J_2)$ and $\Gamma(f)$. Since $D(J_1)$ only has one compact 1-face, $f$ is semi-radially weighted homogeneous. 
    In other hand, $\Gamma_{\rm inn}(f)=D(J_2)$.  
\end{ex}
To achive our aim we focus our analysis on the $\rho$-uniform Milnor radius, which will be instrumental in proving the following result.
\begin{theorem} \label{th:nonice}
	Let \( F: \C^2 \times \C \to \C \) be a deformation of an IKND mixed polynomial function \( f \).  If for each \(\varepsilon\),   \(d(\w;F_\varepsilon-f) \geq d(\w;f), \text{ for all } \w \in \mathcal{P}_{\rm inn}(f),\) then the deformation $F$ is link-constant along some neighborhood $K \subset \C$ of the origin. If the inequality is strict, then $F$ is link-constant. Furthermore, if $f$ is SKND then the deformation is also topologically trivial along the set $K$ or $\K$.  
\end{theorem}
\begin{proof}
	By Proposition~\ref{Prop:GD} it is enough to prove the $\rho_E$-uniform Milnor radius. We will consider $F_\varepsilon$ as a map $F_\varepsilon: (\R^4,0) \rightarrow (\R^2,0)$, then $F_\varepsilon$ has component functions $F_{\varepsilon}^{\,1} = \Re (F_\varepsilon) $ and $F_{\varepsilon}^{\,2} = \Im (F_\varepsilon) $. By Lemma \ref{lemma: IND M -> IKND} we have that $f$ is IKND, then there is a compact set $K \subset \K$ containing the origin such that $F_\varepsilon $ is also IKND for $\varepsilon \in K$. Let $D_1,D_2$ and $D=D_1+D_2$ be the associated $C$-face diagrams. 
	Supposing by contradiction that the deformation does not have a $\rho_E$-uniform Milnor radius and following the same arguments and notations as in the proof of Theorem~\ref{teo: knd co -> urm} we get:
	\begin{align}\label{eq:uniMilnor0}
		\sum_{j \in J} c_j \frac{\partial ( F_{\varepsilon_0}^{\,j} )_{\Theta'_j}}{\partial x_i} (\boldsymbol{x}_1) s^{m-w_i} +\text{h.o.t.} = c_3 x_{0,i} s^{q_i + \beta_3} + \text{h.o.t.}, \quad \text{for all } i \in [4]
	\end{align}
	Here, $m =\min \{d_j+\beta_j \mid j \in \{1,2\}\}$ and $J = \left\lbrace j \mid d_j + \beta_j = m \right\rbrace$ with $d_j= d(\boldsymbol{q}; \Theta_j)$ and  $\Theta_j = \Delta( \boldsymbol{q} ; D_j) $ and $\Theta'_j = \Delta (\boldsymbol{w}; D_j)$. Also, $\Theta=\Delta(\boldsymbol{q}; D \cap \R^I)= \Theta_1+\Theta_2 $ and $\Theta' =\Delta(\boldsymbol{w}; D)= \Theta'_1+ \Theta'_2 $.  Since $f^1$ and $f^2$ arise from an IKND mixed polynomial function, by Claim \ref{claim: mix inner} we have that $w_1=w_2 $ and $w_3=w_4 $.  
	
	Set $I' = \{i \mid m- \beta_{3} = q_i + w_i \}$. By Theorem~\ref{Th:Main1}, $F$ has weak no coalescing of critical points, thus $c_{3} \not=0$, $m- \beta_{3} \leq q_i + w_i$ for all $i$, and $I' \not= \emptyset$. Now we consider two cases depending on whether $I_\Theta$ intersects $I'$ or not.
	
	\underline{Case I: $I' \cap I_\Theta \not= \emptyset $.} 
	We find the same contradiction as Case~I in the proof of Theorem~\ref{teo: knd co -> urm}.
	
	\underline{Case II: \( I' \cap I_\Theta = \emptyset \)}. 
	Observe that if $1 \in I'$ then $2 \not\in I_{\Theta}$. Indeed, if \( 2 \in I_\Theta \), then
	\[
	q_2 + w_2 = w_2 + w_2 = w_1 + w_1 < q_1 + w_1 = m - \beta_3,
	\]
	which contradicts the minimality of \( q_1 + w_1 \).
	Analogously, we have have that if $2 \in I'$ then $1 \not\in I_\Theta$; if $3 \in I'$ then $4 \not\in I_\Theta$; and if $4 \in I'$ then $3 \not\in I_\Theta$.
	
	Without lost of generality, we may assume that $1 \in I'$. Let $q_m = \min\{q_3,q_4\}$, then $ I_\Theta= \{i \in \{3,4\} \mid q_i = q_m\}$.  Define $\overbar{\Theta} = \Delta((q_1,q_2,q_m,q_m); D)$. Then, if $q_3 = q_4$, clearly $\Theta = \overbar{\Theta}$; if $q_3 \not= q_4$, then $\Theta \subset \overbar{\Theta}$. Note that $(F_{\varepsilon_0})_{\overbar{\Theta}} = (F_{\varepsilon_0})_{\tilde{{\Theta}}}$ where $\tilde{\Theta} = \Delta((q_1,q_1,q_m,q_m), \tilde{D})$. If $(F_{\varepsilon_0})_{\tilde{{\Theta}}} \equiv 0$ then by Condition~(iii), $w_3=w_4 \leq w_1=w_2$, which implies that $I_\Theta \cap I' \not= \emptyset$. Hence $(F_{\varepsilon_0})_{\tilde{{\Theta}}} \not\equiv 0$.
	
	We can express $( F_{\varepsilon_0}^{\,1} )_{\Theta'_1}$ as:
	\begin{align*}
		( F_{\varepsilon_0}^{\,1} )_{\Theta'_1} &= ( F_{\varepsilon_0}^{\,1} )_{\tilde{ \Theta}_1} + M_{e_1} x_1 + \cdots
	\end{align*}  
	where $M_{e_1}$ is a polynomial in the variables $x_i$ for $i \in I_{\tilde{\Theta}} $. Suppose that $M_{e_1} \not\equiv 0 $, we have that
	\begin{align*}
		d_1= d(\boldsymbol{w}; x_1 M_{e_1})= w_1 + d(\boldsymbol{w}; M_{e_1}), \quad \text{and} \quad
		d_1= d(\boldsymbol{q}; D_1).
	\end{align*}
	Since $f_{\tilde{{\Theta}}} \not\equiv 0$, \( d_1 \) is an integer number. Given that \( I_{\Theta} \subset \{3,4\} \), we have that \( w_3 \) divides both \( d(\boldsymbol{w}; M_{e_1}) \) and \( d_1 \), so \( w_3 \leq w_1 \)  which implies that $I_\Theta \cap I' \not= \emptyset$. Hence $M_{e_1} \equiv 0 $.
	
	We obtain
	\begin{equation}\label{eq:uniMilnor1}
		\frac{\partial ( F_{\varepsilon_0}^{\,1} )_{\Theta'_1}}{\partial x_1}(\boldsymbol{x}_1) = \frac{\partial ( F_{\varepsilon_0}^{\,1} )_{\Theta_1}}{\partial x_1}(\boldsymbol{x}_0) = 0.
	\end{equation}
	Analogously, we have
	\begin{equation}\label{eq:uniMilnor2}
		\frac{\partial ( F_{\varepsilon_0}^{\,2} )_{\Theta'_1}}{\partial x_1}(\boldsymbol{x}_1) = \frac{\partial ( F_{\varepsilon_0}^{\,2} )_{\Theta_1}}{\partial x_1}(\boldsymbol{x}_0) = 0.
	\end{equation}
	Therefore, substituting  \eqref{eq:uniMilnor1} and \eqref{eq:uniMilnor2} in \eqref{eq:uniMilnor0}, we get  \( c_3 = 0 \), which is a contradiction.
	
	Therefore, the deformation has a $\rho_E$-uniform Milnor radius.  
	
	The topological triviality follows by applying Proposition~\ref{cor: top trivi}.
\end{proof}

\begin{corollary}\label{cor:nicecase}
    Let $f$ be an IKND mixed polynomial function. Then the link of $f$ is isotopic to the link of $f_{\Gamma_{\rm inn}}$, the restriction of $f$ to the $C$-face diagram $\Gamma_{\rm inn}(f)$. 
\end{corollary}
\begin{proof}
    We consider the deformation $F(\bm{x}, \varepsilon) = f(\bm{x}) - \varepsilon (f(\bm{x})-  f_{\Gamma_{\rm inn}}(\bm{x})) $. Hence, by Theorem~\ref{th:nonice} $F$ is link-constant. Therefore, the link of $f$ is isotopic to the link of $f_{\Gamma_{\rm inn}}$. 
\end{proof}
This result is the extension of \cite[Corollary~4.9]{AraujoBodeSanchez} which says that the link of a mixed polynomial function that is INND and $\Gamma$-nice depends only in their Newton boundary. 

We adapt the niceness condition defined for the Newton boundary in \cite{AraujoBodeSanchez} to a niceness condition defined for a $C$-face diagram.
\begin{definition}\label{def:niceness}
Let $f: \C^2 \to \C$ be a mixed polynomial function, then we say that $f$ is:
\begin{itemize}
    \item[(i)]  $\Gamma$-nice (or it has a $\Gamma$-nice Newton boundary) if for every non-extreme vertex $\Delta$ of $\Gamma(f)$, we have $V(f_{\Delta})\cap (\C^*)^2 = \emptyset$.
    \item[(ii)]\textit{nice} with respect to a $C$-face diagram $D$ (or $D$-nice) if, for every inner $0$-face $\Delta$ of $D$, the face function $f_{\Delta}$ satisfies $V({f_{\Delta}})\cap (\C^*)^{2}=\emptyset$.
If $f$ is nice with respect to $D=\Gamma_{\rm inn}(f)$, we say that $f$ is $\Gamma_{\rm inn}$-nice.
\end{itemize}
\end{definition}
If $f$ is IKND, then being $\Gamma$-nice is equivalent to being
$\Gamma_{\mathrm{inn}}$-nice. It is clear that for any face $\Delta$
satisfying \(
V(f_{\Delta})\cap(\mathbb{C}^*)^{2}=\emptyset,
\) the function $f$ is KND on $\Delta$. The converse does not hold, even in
the case of $0$-faces.

We now present a necessary and sufficient condition for a face function associated with a vertex of a $C$-face diagram to be KND. This result helps to better understand the vertices that made to fail the niceness condition.

Let $M:\C^2 \to \C$ be a mixed polynomial function satisfying $\supp (M)=\{(a,b)\}$. Let $x_2=r_2 \rm e ^{\rm i t_2}$. Then, $M$ admits a decomposition of the form: 
\[M(x_1,r_2 \rme^{\rm i t_2})=\overbar{x_{1}}^a r_{2}^{\,b} \sum_{i=0}^a c_{i}(t_2)\left(\frac{x_1}{\overbar{x_{1}}}\right)^i.\]
Define the family of univariate complex polynomials:   \[p_{t_2}(z):=\sum_{i=0}^a c_{i}(t_2)z^i.\]
\begin{proposition}\label{prop:characterizationknd}
    Let $M:\C^2 \to \C$ be a mixed polynomial function with $\supp (M)= \{(a,b)\}$. 
Then, $M$ is KND if and only if for all $z_0 \in \{z \in S^1\mid p_s(z)=0 \text{ for some } s\in [0,2\pi]\}$ the following condition holds: $$ \Im\left( \rmi z_0 \frac{\partial p_{t_2}}{\partial z}(z_0)\overbar{\frac{\partial p_{t_2}}{\partial {t_2}}(z_0)}\right)\neq 0. $$            
\end{proposition}
\begin{proof}
($\Rightarrow$) 
Suppose by contradiction that $$ \Im\left( \rmi z_0 \frac{\partial p_{t_2}}{\partial z}(z_0)\overbar{\frac{\partial p_{t_2}}{\partial t_2}(z_0)}\right) = 0 $$ for some unitary root $z_0$ of $p_{s}$. Let $w \in S^1$, with $w^2=z_0$ and $\lambda_1, \lambda_2 \in \R_{>0}$. Considering variables $(r_1,t_1,r_2,t_2)$, $x_i=r_i \rme^{\rmi t_i}, \ i=1,2$, we find:
$$\frac{\partial M}{\partial r_1}(\lambda_1 w,\lambda_2 \rme^{\rmi s})
=\frac{\partial M}{\partial r_2}(\lambda_1 w,\lambda_2 \rme^{\rmi s})=M(\lambda_1 w,\lambda_2 \rme^{\rmi s})=0,$$ 
$$\frac{\partial M}{\partial t_2}(\lambda_1 w, \lambda_2\rme^{\rmi s})= \lambda_{1}^{\,a} (\overbar{w})^a \lambda_{2}^{\,b} \frac{\partial p_{t_2}}{\partial t_2}(z_0)
$$
and
$$\frac{\partial M}{\partial t_1}(\lambda_1w,\lambda_2 \rme^{\rmi s})
=2 \rmi z_0 \lambda_{1}^{\,a}  (\overbar{w})^a \lambda_{2}^{\,b} \frac{\partial p_{t_2}}{\partial z}(z_0).$$
Therefore, $\Im\left(\frac{\partial M}{\partial t_1}(\lambda_1 w, \lambda_2 \rme^{\rmi s})\overbar{\frac{\partial M}{\partial t_2}(\lambda_1 w,\lambda_2 \rme^{\rmi s})}\right)=0.$ This implies $\Sigma(M)\cap V(M) \cap (\C^*)^2\neq \emptyset$ which contradicts that $M$ is KND.

($\Leftarrow$) We find that 
\begin{equation} \label{eq: nice equivalence}
V(M)\cap(\mathbb{C}^*)^2
=
\Bigl\{
(\lambda_1 w,\lambda_2 \rme^{\rmi s})\in\mathbb{C}^2
\mid
\lambda_1, \lambda_2\in\mathbb{R}_{>0},\
s\in [0,2\pi],\,w \in S^1, \,
p_s(w^2)=0
\Bigr\}.
\end{equation}
Using the calculations from ($\Rightarrow$) for $\bm{a} \in V(M)\cap (\mathbb{C}^*)^2$, we get:
$\Im\left(\frac{\partial M}{\partial t_1}(\bm{a})\overbar{\frac{\partial M}{\partial t_2}(\bm{a})}\right)\neq 0.$ Therefore,  $\Sigma(M)\cap V(M) \cap (\C^*)^2= \emptyset$, thus $M$ is KND.
\end{proof}
\begin{remark}\label{remark:charKNDvertex}
From the characterization in Proposition~\ref{prop:characterizationknd}, we get directly the following implications for a mixed polynomial function $M$ with $\supp (M)= \{(a,b)\}$:
    \begin{itemize}
    \item[(i)] Using the polar coordinate representation $x_1 = r_1 \rme^{\mathrm{i} t_1}$, we analogously derive a family of univariate polynomials $q_{t_1}(\boldsymbol{z})$. Proposition~\ref{prop:characterizationknd} remains valid when $p_{t_2}(\boldsymbol{z})$ is replaced by $q_{t_1}(\boldsymbol{z})$.

        \item[(ii)] If $M$ is ($x_i$ or $\overbar{x_{i}}$)-semiholomorphic, then $M$ is KND if and only if $ V(M) \cap (\C^*)^2= \emptyset$. In particular, if  $a=0$ or $b=0$. 
        \item[(iii)] If $M$ is KND, then each unitary root of $p_t, t\in [0,2\pi],$ is simple.
    \end{itemize}
\end{remark}
\begin{ex}\label{exnonice}
Consider \( f(\bm{x}) = x_{1}^{\,3} + x_2x_1 + \left( x_2 + \frac{1}{2} \overbar{x_{2}} \right) \overbar{x_{1}} + x_{2}^{\,2} \). The mixed polynomial function \( f \) is IKND and not \(\Gamma\)-nice. Furthermore, we have
\(
\Gamma_{\text{inn}}(f) = \Gamma(f).
\)

To illustrate this, define the mixed polynomial function \( M(\bm{x}) = x_2 x_1 + \left( x_2 + \frac{1}{2} \overbar{x_{2}} \right) \overbar{x_{1}} \). A direct application of Proposition~\ref{prop:characterizationknd} shows that \( M \) is KND, and \( V(M) \cap (\mathbb{C}^*)^2 \) is given by:
\[
V(M) \cap (\mathbb{C}^*)^2 = \left( \bigcup_{i=1}^4 l_i \right) \setminus \{ 0 \},
\]
where each \( l_i \) (for \( i = 1, 2, 3, 4 \)) is a line parametrized by \( \lambda \bm{a}_i \), \( \lambda \in \mathbb{R} \). Moreover, a direct calculation shows that if \( \bm{a} \notin V(M) \cap (\mathbb{C}^*)^2 \), then:

\[
|M_{x_1}(\bm{a})| \neq |M_{\overbar{x_{1}}}(\bm{a})| \quad \text{and} \quad |M_{x_2}(\bm{a})| \neq |M_{\overbar{x_{2}}}(\bm{a})|.
\]

Let \( \mathcal{P}(f) = \{ \bm{w}_1, \bm{w}_2 \} \), where \( \bm{w}_1 = (1, 1) \) and \( \bm{w}_2 = (1, 2) \). Thus,
\[f_{\bm{w}_1}(\bm{x}) = x_2x_1 + ( x_2 + \tfrac{1}{2} \overbar{x_{2}} ) \overbar{x_{1}} + x_{2}^{\,2} \text{  and  } f_{\bm{w}_2}(\bm{x}) = x_{1}^{\,3} + x_2x_1 + ( x_2 + \tfrac{1}{2} \overbar{x_{2}} ) \overbar{x_{1}}. \]
Since the mixed monomials \( x_{2}^{\,2} \) and \( x_{1}^{\,3} \) are both KND, we have the following:
\[
\Sigma(f_{\bm{w}_1}) \cap V(f_{\bm{w}_1}) \cap (\mathbb{C}^*)^{\{ 1 \}} = \Sigma(f_{\bm{w}_2}) \cap V(f_{\bm{w}_2}) \cap (\mathbb{C}^*)^{\{ 2 \}} = \emptyset.
\]

To prove that \( f \) is IKND, we still need to show that:

\[
\Sigma(f_{\bm{w}_1}) \cap V(f_{\bm{w}_1}) \cap (\mathbb{C}^*)^{\{1, 2\}} = \Sigma(f_{\bm{w}_2}) \cap V(f_{\bm{w}_2}) \cap (\mathbb{C}^*)^{\{1, 2\}} = \emptyset.
\]

A direct calculation yields that for any \( \bm{a} \notin V(M) \cap (\mathbb{C}^*)^2 \), the following relations hold:

\[
|(f_{\bm{w}_1})_{x_1}(\bm{a})| = |M_{x_1}(\bm{a})| \neq |M_{\overbar{x_{1}}}(\bm{a})| = |(f_{\bm{w}_1})_{\overbar{x_{1}}}(\bm{a})|.
\]
Thus, we conclude that
\(
\Sigma(f_{\bm{w}_1}) \cap (\mathbb{C}^*)^2 \subset V(M) \cap (\mathbb{C}^*)^2.
\) Now, if \( \bm{a} \in V(M) \cap (\mathbb{C}^*)^2 \), we have
\(
f_{\bm{w}_1}(\bm{a}) = M(\bm{x}) + a_{2}^{\,2} = a_{2}^{\,2} \neq 0.
\)
Therefore, \( \bm{a} \notin V(f_{\bm{w}_1}) \). This shows that:
\(
\Sigma(f_{\bm{w}_1}) \cap V(f_{\bm{w}_1}) \cap (\mathbb{C}^*)^2 = \emptyset.
\)
Similarly, we can apply the same reasoning for \( \bm{w}_2 \), yielding:
\(
\Sigma(f_{\bm{w}_2}) \cap V(f_{\bm{w}_2}) \cap (\mathbb{C}^*)^2 = \emptyset.
\)
Therefore, we conclude that \( f \) is IKND with respect to its Newton boundary. To see that $\Gamma_{\rm inn}(f)=\Gamma(f)$, note that $f$ is not IKND with respect to any other $C$-face diagram $D$, since $\Sigma(M)\cap V(M)\cap \C^{\{1\}}\neq \emptyset$ and $\Sigma(M)\cap V(M)\cap \C^{\{2\}}\neq \emptyset$. 
\end{ex}
\begin{ex}\label{ex:nonice2}
Consider the mixed polynomial 
 $$f(\x)= x_{1}^{\,4} + (x_{2}^{\,2} - \overbar{x_{2}}^2) x_1 \overbar{x_{1}} + (\rmi + 1)  {x_2} \overbar{x_{2}} ( x_{1}^{\,2} + \overbar{x_{1}}^2)+ x_{2}^{\,6} .$$
    It can be shown that \(f\) is IKND with respect to the Newton boundary \(\Gamma(f)\). Note that \(f\) does not satisfy the niceness condition. In fact, the unique inner vertex is $\Delta=(2,2)$, for $r \in \R^* $ taking $(x_1,x_2) = (r+ \rmi r , r)  \in (\C^*)^2$ we have that $ x_{1}^{\,2} =2\rmi r^2$ and $x_2^{\,2}= r^2$ then $(r+ \rmi r , r) \in  V(f_{\Delta}) \cap (\C^*)^2$.
\end{ex}

The following proposition implies that both the nice and the non-nice conditions are open properties.
\begin{proposition} \label{prop: niceisopen}
	Let \(F:\C^2 \times \C \to \C\) be a deformation of an IKND mixed polynomial function \(f\) with respect to a \(C\)-face diagram \(D\).
	\begin{itemize}
		\item[(i)] If \(f\) is not \(D\)-nice, then there exists a connected neighborhood \(K \subset \C\) of the origin such that, for every \(\varepsilon \in K\), the function \(F_\varepsilon\) is not \( D\)-nice.
		\item[(ii)] If \(f\) is \(D\)-nice, then there exists a connected neighborhood of the origin \(K \subset \C\) such that, for every \(\varepsilon \in K\), the function \(F_\varepsilon\) is \(D\)-nice.
	\end{itemize}
\end{proposition}
\begin{proof}
    Let $ \Delta = \{(a,b)\}$ be a non-extreme vertex of $D$. Let $\theta(\bm{x},\varepsilon) $ such that $ (F_{\varepsilon})_{\Delta}(\bm{x}) = f_\Delta(\bm{x} ) + \theta(\bm{x},\varepsilon) $. Then

    \[f_\Delta (x_1,r_2\rme^{\rmi t_2}) =\overbar{x_{1}}^a r_{2}^{\,b} \sum_{i=0}^a c_{i}(t_2)\left(\frac{x_1}{\overbar{x_{1}}}\right)^i , \, \theta_\varepsilon(x_1,r_2\rme^{\rmi t_2}) = \overbar{x_{1}}^a r_{2}^{\,b} \sum_{i=0}^a d_{i}(t_2,\varepsilon)\left(\frac{x_1}{\overbar{x_{1}}}\right)^i.\]
    Define the family of univariate complex polynomials:   \[p_{t}(z):=\sum_{i=0}^a c_{i}(t)z^i, \, q_{t,\varepsilon}(z) = \sum_{i=0}^a d_{i}(t,\varepsilon)z^i. \]
    
    Suppose that $V(f_{\Delta}) \cap (\C^*)^2 \not= \emptyset $. Then, by \eqref{eq: nice equivalence}, there exist $t_0$  and unitary $z_0$ such that $p_{t_0}(z_0) = 0$. Define the map $G:\R^5 \rightarrow \R^3, \, G(z,t,\varepsilon) = (p_t(z)+q_t(z,\varepsilon),z\overbar{z}-1) $. Then $G(z_0,t_0,0)=0$ and 
    \[J_G(z_0,t_0,0) = \begin{pmatrix}
        \frac{\partial p_{t_0}}{\partial z}(z_0)  & \frac{\partial p_{t_0}}{\partial t}(z_0) & \frac{\partial q_{t_0}}{\partial \varepsilon}(z_0,0) \\
        z_0 & 0& 0
    \end{pmatrix}. \]    
    Note that  
    \[\det \begin{pmatrix}
        \frac{dp_{t_0}}{dz}(z_0) & \frac{dp_{t_0}}{dt}(z_0)\\
        z_0 & 0
    \end{pmatrix} = -\Im\left( \rmi z_0 \frac{dp_t}{dz}(z_0)\overbar{\frac{dp_t}{dt}(z_0)}\right) .\]
    By Proposition~\ref{prop:characterizationknd} the right-hand side of the previous equation is nonzero. Hence, the matrix on the left-hand side is invertible. Thus, by implicit function theorem we have that there exist a neighborhood $K$ of the origin, and functions $z(\varepsilon)$ and $t(\varepsilon)$ such that $z(0)=z_0$, $t(0) = t_0$ and $G(z(\varepsilon),t(\varepsilon),\varepsilon)=0 $ for all $\varepsilon \in K$. Hence, for any $\varepsilon_0 \in K$ we have that $z(\varepsilon_0)$ is a simple root of $p_{t_0}(z)+q(z,\varepsilon_0)$. Therefore, by \eqref{eq: nice equivalence}, $V((F_\varepsilon)_{\Delta}) \cap (\C^*)^2 \not= \emptyset $ for all $\varepsilon \in K$.

    Suppose that $V(f_{\Delta}) \cap (\C^*)^2 = \emptyset $. Assume, by contradiction, that there is a sequence $\varepsilon_n \rightarrow 0$ such that $V((F_{\varepsilon_n})_{\Delta}) \cap (\C^*)^2 \not= \emptyset $. Then, by \eqref{eq: nice equivalence}, there exist $z_n \in S^1$ and $t_n \in [0, 2 \pi]$ such that $p_{t_n}(z_n)+q_{t_n,\varepsilon_n}(z_n) = 0$. Since $S^1$ and $[0, 2\pi]$ are compact there is a subsequence such that $z_n \rightarrow z_0 \in S^1$, $t_n \rightarrow t_0 \in [0 , 2 \pi]$, and by continuity of $p$ and $q$ in the variables $z$, $t$ and $\varepsilon$ we have that $0 =\lim_{n \rightarrow 0}p_{t_n}(z_n)+q_{t_n,\varepsilon_n}(z_n) = p_{t_0}(z_0) + q_{t_0,0}(z_0) =  p_{t_0}(z_0)$, which is a contradiction. Then, there exists a connected neighborhood $K$ such that, for every \(\varepsilon \in K\), the function \(F_\varepsilon\) is not \(D\)-nice.
\end{proof}

\begin{remark}
Fix a $C$-face diagram $D \subset \R^2$, and let $m \in \N$ be such that any mixed polynomial function
$f \colon \C^2 \to \C$ with $\supp (f) \subseteq D$ can be written in the form
\(
f(\bm{x}) = \sum_{i=1}^m c_i \bm{x}^{\bm{\nu}_i} \overbar{\bm{x}}^{\bm{\mu}_i}.
\)
Then $\C^m$ can be identified with the space of coefficients of mixed polynomial functions whose support is contained in $D$.
Define
\( 
U_D := \bigl\{ \bm{c} \in \C^m \mid \text{the mixed polynomial }
f_{\bm{c}}(\bm{x}) = f(\bm{x},\bm{c}) = \sum_{i=1}^m c_i \bm{x}^{\bm{\nu}_i} \overbar{\bm{x}}^{\bm{\mu}_i}
\text{ is IKND and } \supp (f) \subseteq D \bigr\}.
\)
Since the IKND condition is open, the set $U_D$ is an open subset of $\C^m$.
Now define
\(
U_{D\text{-nice}} := \bigl\{ \bm{c} \in U_D \mid f_{\bm{c}}(\bm{x}) \text{ is $D$-nice} \bigr\}.
\)
Then we have the decomposition
\(
U_D = U_{D\text{-nice}} \cup ((U_{D\text{-nice}})^c \cap U_D ).
\)
Proposition~\ref{prop: niceisopen} implies that both $U_{D\text{-nice}}$ and $(U_{D\text{-nice}})^c \cap U_D$ are open subsets of $\C^m$.
Therefore, $U_D$ is not connected. In particular, there is no continuous path of IKND mixed polynomial functions, with respect to the fixed $C$-face diagram $D$, that connects a $D$-nice mixed polynomial function with a non $D$-nice one.
\end{remark}

In \cite{AraujoBodeSanchez}, Ara\'ujo dos Santos, Bode, and Sanchez Quiceno studied the link of  INND mixed polynomial functions of two variables that are $\Gamma$-nice; they proved that the link of the singularity depends only on the terms of the Newton boundary and the isotopic type has a representative in the form $\mathbf{L}([L_1, L_2, \dots, L_{N-1}], L_{N})$ (Definition \ref{nestedtori} and \ref{nestedlink}),  where the links $L_{i}$ are constructed from the terms of $f$ on the Newton boundary. 
\begin{definition}\label{nestedtori}\cite{AraujoBodeSanchez}
Let \((L_1,L_2, \dots, L_{\ell})\) be a sequence of possible empty links, $L_1$ in $\C\times S^1$, $L_{i}, i=2,3,\dots, \ell$, in $\C^*\times S^1$  where each $L_i$ is parametrized by 
\begin{equation*}
\bigcup_{j=1}^{M_i}\{(u_{i,j}(\tau),\rme^{\rmi t_{i,j}(\tau)})\mid  \,\tau\in[0,2\pi]\}\subset\mathbb{C}\times {S}^1,
\end{equation*}
then we define $[L_1,L_2,\ldots,L_{\ell}]$ on $\C \times S^1$ as follows:
\begin{equation*}
\bigcup_{i=1}^{\ell}\bigcup_{j=1}^{M_i}\{(\varepsilon^{k_i} u_{i,j}(\tau),\rme^{\rmi t_{i,j}(\tau)})\mid  \,\tau\in[0,2\pi]\}\subset\mathbb{C}\times S^1,
\end{equation*}
for some sufficiently small $\varepsilon>0$, 
\end{definition}  
\begin{definition}[\cite{AraujoBodeSanchez}]\label{nestedlink}
Let $K_1$ be a link in $\C\times S^1$ that is either empty or parametrized by
\begin{equation}\label{linkparametrization}
\bigcup_{j=1}^M\{(u_{j}(\tau),\rme^{\rmi t_{j}(\tau)})\mid\tau\in[0,2\pi]\}
\end{equation}
for appropriate functions $u_j$, $t_j$,
and $K_2$ be a link in $S^1\times\C$ that is either empty or parametrized by
\begin{equation*}
\bigcup_{j=1}^{M'}\{(\rme^{\rmi \varphi_{j}(\tau)},v_{j}(\tau))\mid \tau\in[0,2\pi]\}
\end{equation*}
for appropriate functions $v_j$, $\varphi_j$.
We define the link $\widetilde{K_i}$, $i=1,2$, in $S^3$, which is empty if $K_i$ is empty and otherwise parametrized by 
\begin{equation*}
\bigcup_{j=1}^M\left\{\left(\varepsilon u_{j}(\tau),\sqrt{1-\varepsilon^2|u_j(\tau)|^2}\rme^{\rmi t_{j}(\tau)}\right)\mid \tau\in[0,2\pi]\right\}
\end{equation*}
and
\begin{equation*}
\bigcup_{j=1}^{M'}\left\{\left(\sqrt{1-\varepsilon^2|v_j(\tau)|^2}\rme^{\rmi \varphi_{j}(\tau)},\varepsilon v_{j}(\tau)\right)\mid \tau\in[0,2\pi]\right\}
\end{equation*}
for some small $\varepsilon>0$, respectively.
Then we write $\mathbf{L}(K_1,K_2)$ for the link in $S^3$ given by $\widetilde{K_1}\cup\widetilde{K_2}$.
\end{definition}     
In \cite{Bode2025}, Bode determined the topological conditions on the links
$L_{i}$ that appear in
$\mathbf{L}([L_1, L_2, \dots, L_{N-1}], L_N)$.
Furthermore, in \cite[Theorem~1.6]{Bode2025}, he gave a complete
characterization of the links that arise as links of $\Gamma$-nice and
INND mixed polynomial functions.

In what follows, we present a slightly different proof of
\cite[Theorem~1.2]{AraujoBodeSanchez}, showing that the link of a
$\Gamma$-nice and INND mixed polynomial (equivalently,
$\Gamma_{\mathrm{inn}}$-nice and IKND) is of the form
\(
\mathbf{L}([L_1, L_2, \dots, L_{N-1}], L_N).
\)
Our proof is based on the construction of a link-constant piecewise analytic family of mixed polynomials.
 
\begin{proposition} \label{th:piecewise family}
    Let $f$ be an IKND mixed polynomial function that is $\Gamma_{\rm inn}$-nice. Then, there is a link-constant piecewise analytic family $ \{ f_\varepsilon\}_{\varepsilon \in [0,1]}$, with $f_0 = f $ such that the link of $f_1$ is isotopic to \(\mathbf{L}([L_1, L_2, \dots, L_{N-1}], L_{N}),\) where $L_1 \subset \C \times S^1$, $L_i \subset \C^* \times S^1, \ i=2,3,\dots, N-1 $, $L_{N} \subset S^1 \times \C$.
\end{proposition}
\begin{proof}
    Let $\mathcal{P}_{\rm inn}(f) = \{ \bm{w}_1, \dots, \bm{w}_N \}$. We distinguish two cases according to the value of $N$, namely $N>1$ and $N=1$.
    
    \underline{Case I:} $N>1$. 
    The $C$-face diagram $\Gamma_{\rm inn}(f)$ has at least two compact 1-faces. We write the variables in polar coordinates as $x_1=r_1\rme^{\rmi t_1}$ and $x_2=r_2\rme^{\rmi t_2}$. For $f$ and  each weight vector $\w_i=(w_{i,1},w_{i,2}) \in \mathcal{P}_{\rm inn}(f)$, we define the associated functions:
\[g_i(x_1,\rme^{\rmi t_2}):=\lim_{r_2 \to 0} r_2^{-\frac{d(\w_i;f)}{w_{i,2}}}f( r_{2}^{\,k_i} x_1,r_2\rme^{\rmi t_2})\]
where $i=1,2,\dots,N-1$, and $g_i: \C \times S^1 \to \C$.
Similarly, we define
\[h_{N}(\rme^{\rmi t_1},x_2):=\lim_{r_1 \to 0}  r_1^{-\frac{d(\w_N;f)}{w_{N,1}}}f(r_1 \rme^{\rmi t_1},r_1^{\frac{1}{k_N}}x_2) \]
where $h_{N}: S^1 \times \C \to \C$.
  
	We now consider the following subsets: 
	\begin{align*}
		& L_{1}:= V(g_1)\cap  (\C \times S^1) \subset \C \times S^1,\\
		& L_{i}:= V(g_i)\cap (\C^* \times  S^1) \subset \C^* \times S^1, \ i \in \{2,3,\dots,N-1\}, \\
		& L_{N}:=V(h_N)\cap (S^1 \times \C)\subset S^1 \times \C .  
	\end{align*}
    Since $f$ is IKND and $\Gamma_{\rm inn}$-nice, each $L_i$ is compact and smooth link in the corresponding ambient space. Applying the same arguments as in \cite[Theorem~1.2]{AraujoBodeSanchez}, with $\mathcal{P}_{\rm inn} (f)$ in place of $\mathcal{P}(f)$ we conclude that the link $L_f $ is isotopic to $\textbf{L}([L_1, \dots, L_{N-1}],L_N)$. Therefore, in this case we may take the constant family $\{f_\varepsilon \}_{\varepsilon \in [0,1]} $, defined by $f_\varepsilon = f$ for all $\varepsilon.$
    
    \underline{Case II:}  $N=1$.  Suppose that $\text{supp}(f) \cap \Gamma_{\rm inn}(f) \cap \R^{\{1\}} = \emptyset$. We consider the family $\{F_\varepsilon \}_{\varepsilon \in [0,1]}$ defined by $F(\bm{x}, \varepsilon)= f+\varepsilon M(\bm{x}_1)$, such that $\supp (M) = \{(m_,0)\} \subset \text{Int} (\Gamma_{\rm inn}(f)+(\mathbb{Z}_{\geq 0})^2$).
    By Theorem~\ref{th:nonice}, the deformation $F$ is link-constant. Let $S_{\rm inn} \subset \mathbb{Z}^2$ be such that $\Gamma(S_{\rm inn}) = \Gamma_{\rm inn} (f)$, and define $D=\Gamma((S_{\rm inn} \setminus \R^{\{1\}}) \cup\supp (M))$. Then $\mathcal{P}(D)=\{\bm{w}_1, \bm{w}_2\} $. Since $f$ is IKND, we can conclude that $f_{w_2} - M$ is KND and that $\supp (f_{w_2} - M) = \{( m' , 1) \}  $. Using the same argument as in \cite[Corollary~4.11]{AraujoBodeSanchez} one proves that $\Sigma((f+M)_{\Delta(\bm{w}_2;D)}) \cap V((f+M)_{\Delta(\bm{w}_2;D)}) \cap (\C^*)^I = \emptyset $ for $I = \{1\}$ and $I=\{1,2\} $. Then, $f + M$ is IKND with respect to $D$. Hence, applying the same argument as in Case I to $\mathcal{P}(D)$, we conclude that the link type of $f+M$ has the desired form. Consequently, $\{ F_\varepsilon \}_{\varepsilon \in [0,1]}$ is a link-constant piecewise analytic family.

 Suppose now that $\text{supp}(f) \cap \Gamma_{\rm inn}(f) \cap \R^{\{1\}} \not= \emptyset$. Let $M = f_\Delta $, where $\Delta =  \Gamma_{\rm inn}(f) \cap \R^{\{1\}} =\{(a,0)\}$. Let $n= \deg_{x_1}(M(x_1))$. Then $M$ can be written as $M(x_1) = \sum_{i=0}^n c_i  x_{1}^{\,i} \overbar{x_{1}}^{a-i} $, where the coefficients $c_i,\ i=0,\dots, n-1,$ are (possibly zero). Equivalently,
\[M(x_1)=\overbar{x_{1}}^a\left(\sum_{i=0}^n c_i \left(\frac{x_1}{\overbar{x_{1}}}\right)^i\right),\]
Hence, 
\[M(x_1)=c_{n} \overbar{x_{1}}^a \prod_{i=0}^n\left(\frac{x_1}{\overbar{x_{1}}}-\gamma_i\right),\]
where $\gamma_1,\dots ,\gamma_n$ are the roots of the polynomial $\overbar{x_{1}}^{-d}M(x_1)$ in the variable $z = \frac{x_1}{\overbar{x_{1}}}$. Set $I=\{i \mid |\gamma_i|=1\}.$ By Remark \ref{remark:charKNDvertex}-(ii), the polynomial $M$ is KND if and only if $I = \emptyset$. If $M$ is KND, then we may apply the same argument as in Case I with $\mathcal{P}_{\rm inn}(f) = \{ \bm{w}_1 \} $, obtaining that the link of $f$ is isotopic to $\mathbf{L}([L_1])$. Then we take the constant family $\{f_\varepsilon \}_{\varepsilon \in [0,1]} $ with $f_\varepsilon = f$ for all $\varepsilon$. Assume now that $M$ is not KND, that is, $I \not= \emptyset $. Suppose moreover that each $\gamma_i$, $i \in I $, is a simple root. Define

\[\theta(x_1,\varepsilon)= \varepsilon \left(\sum_{i=0}^n i c_i x_{1}^{\,i} \overbar{x_{1}}^{a-i}\right).\] 
Applying the implicit function theorem to $\overbar{x_{1}}^{-a}(M(x_1) + \theta(x_1,\varepsilon))$ (which is a complex function of variable $(\frac{x_1}{\overbar{x_{1}}}, \varepsilon) $) at the point $(\gamma_i, 0) $, we deduce that $\overbar{x_{1}}^{-a}(M(x_1) + \theta(x_1,\varepsilon))$ has no roots in the unit circle for all sufficiently small real $ \varepsilon\not= 0 $. Consequently, $M(x_1) + \theta(x_1,\varepsilon)$ is KND for all sufficiently small real $\varepsilon\neq 0$. By Theorem~\ref{th:nonice}, there exists a neighborhood $K \subset \C$ of the origin such that the deformation $F(\bm{x},\varepsilon) = f(\bm{x}) + \theta( x_1,\varepsilon)$ is link-constant along $K$. Let $\varepsilon_1\in K\cap \R$ be sufficiently small so that $M(x_1)+\theta(x_1,\varepsilon_1)$ is KND. Then, the family $\{F_{\delta \varepsilon_1}\}_{\delta \in [0,1]}$ is link-constant. Moreover, $F(\bm{x},\varepsilon_1) = f(\bm{x}) + \theta( x_1,\varepsilon_1)$ is IKND with respect to $\Gamma_{\rm inn}(f)$. Applying the same argument as in Case I to $\mathcal{P}_{\rm inn}(f) = \{ \bm{w}_1\}$, we conclude that the link type of $F_{\varepsilon_1}$ has the desired form. Thus, we obtain a link-constant family $\{F_{\delta \varepsilon_1}\}_{\delta \in [0,1]}$.

Suppose that there exists $i \in I$ such that $\gamma_i $ is not a simple root. Since having only simple roots is an open property on the coefficients, there exists  $\varepsilon_2  \in K $ sufficiently small such that $\overbar{x_{1}}^{-d}(M(x_1) + \hat{\theta}(x_1,\varepsilon_2))  $ viewed as a polynomial in the variable $\frac{x_1}{\overbar{x_{1}}} $ has only simple roots, where $\hat{\theta}_\varepsilon(x_1)$ is a mixed polynomial function satisfying $\supp (\hat{\theta}_\varepsilon )= \supp (M) $. Define $\hat{F}(\bm{x},\varepsilon) = f(\bm{x},\varepsilon) + \hat{\theta}(x_1,\varepsilon) $. By argument above, there exists a family $\{\hat{f}_\delta\}_{\delta \in [0,1]}$ such that $\hat{f}_0 = \hat{F}_{\varepsilon_2}$ and the link of $\hat{f}_1$ has the desired form. Therefore, the desired piecewise family is obtained  by concatenating the families $\{\hat{F}_{\delta\varepsilon_2}\}_{\delta \in [0,1]} $ and $\{\hat{f}_\delta\}_{\delta \in [0,1]}$.
\end{proof}
\begin{ex}\label{ex:nice1}
Consider $f$ as Example~\ref{ex:nice}. Thus, $\mathcal{P}_{\rm inn }(f)=\{\w_1=(1,1), \w_2=(2,3)\}$. The mixed polynomial $f$ is $\Gamma_{\rm inn}$-nice. By Corollary~\ref{cor:nicecase}, the link of $f$ is isotopic to the link of $f_{\Gamma_{\rm inn}}
    $, which is represented by the link $\mathbf{L}([L_1],L_{2})$, where 
	$$L_1=V(g_1)\cap (\C \times S^1) 
	\text{ and }L_{2}=V(h_{2})\cap (S^1 \times \C).$$
	Here, $g_1(x_1,\rme^{\rmi t_2})=x_1 \rme^{- 3 \rmi t_2}$ and $h_2(\rme^{\rmi t_1},x_2)=\overbar{x_{2}}\rme^{\rmi t_1}( \rme^{3 \rmi t_1}+ \overbar{x_{2}}^2)$.
	The link $L_1= \{0\} \times S^1$ and $L_{2}$ splits in two components $ S^1 \times\{0\}$ and the trefoil $L$ given by the roots $\rme^{-\frac{3}{2} \rmi t_1}$ and $-\rme^{-\frac{3}{2} \rmi t_1}$. 
\end{ex}
Finally, we generalize \cite[Theorem~1.9]{Bode2025} to the full class of IKND mixed polynomial functions. This result shows that, in order to obtain a complete characterization of the links arising from IKND mixed polynomial functions, it suffices to study the subclass of convenient and KND mixed polynomials. This remains an open problem and presents significant challenges, which motivate the development of new tools for the study of links of singularities. Such tools may also prove useful in higher-dimensional settings.  
\begin{proposition}\label{cor:classoka}
A link type $L\subset S
^3$ arises as the link of a convenient KND mixed polynomial function if and only if it is the link of an IKND mixed polynomial function.
\end{proposition}
\begin{proof}
    $(\Rightarrow)$ This implication follows directly from
    Proposition~\ref{prop: knd co -> iknd}. 

    $(\Leftarrow)$ We shall construct a link-constant family $\{f_\varepsilon\}_{\varepsilon \in [0,1] }$ such that $f_0 =f$ and $f_1$ is convenient KND mixed polynomial. There are four cases to consider 
    
    $$\begin{cases}
        \text{supp}(f) \cap \Gamma_{\rm inn}(f) \cap \R^{\{1\}} = \emptyset \text{ and }
        \text{supp}(f) \cap \Gamma_{\rm inn}(f) \cap \R^{\{2\}} = \emptyset \\
        \text{supp}(f) \cap \Gamma_{\rm inn}(f) \cap \R^{\{1\}} \not= \emptyset \text{ and }
        \text{supp}(f) \cap \Gamma_{\rm inn}(f) \cap \R^{\{2\}} = \emptyset \\
        \text{supp}(f) \cap \Gamma_{\rm inn}(f) \cap \R^{\{1\}} = \emptyset \text{ and }
        \text{supp}(f) \cap \Gamma_{\rm inn}(f) \cap \R^{\{2\}} \not= \emptyset \\
        \text{supp}(f) \cap \Gamma_{\rm inn}(f) \cap \R^{\{1\}} \not= \emptyset \text{ and }
        \text{supp}(f) \cap \Gamma_{\rm inn}(f) \cap \R^{\{2\}} \not= \emptyset .
    \end{cases}$$
    We explain Case II, the remaining cases are analogous.
    
    Consider the family $\{F_\varepsilon \}_{\varepsilon \in [0,1]}$ defined by $F(\bm{x}, \varepsilon)= f+\varepsilon M(\bm{x}_2)$, where $M$ is a mixed monomial satisfying $\supp (M) = \{(0,b)\} \subset \mathrm{Int} (\Gamma_{\rm inn}(f)+(\mathbb{R}_{\geq 0})^2$) and such that $M$ is KND.
    By Theorem~\ref{th:nonice}, the deformation $F$ is link-constant. By the construction in the proof of Theorem~\ref{th:piecewise family} there exists a link-constant piecewise family $\{\hat{f}_\varepsilon\}_{\varepsilon \in [0,1]}$ such that $\hat{f}_0 = F_1$ and $\hat{f}_1$ is convenient and KND. Therefore, the link of $f$ is isotopic to the link of $\hat{f}_1$.
    
\end{proof}

  \bibliographystyle{amsplain}
  \renewcommand{\refname}{ \large R\normalsize  EFERENCES}
\bibliography{sn-bibliography}
\Addresses

\end{document}